\newtheorem{theorem}{Theorem}[section]
\newtheorem{lm}[theorem]{Lemma}
\newtheorem{cor}[theorem]{Corollary}
\newtheorem{rem}[theorem]{Remark}
\newtheorem{pr}[theorem]{Proposition}
\newtheorem{example}[theorem]{Example}
\newtheorem{question}[theorem]{Question}
\newtheorem{problem}[theorem]{Problem}
\newtheorem{ex}[theorem]{Example}
\begin{document}

\title{On the notion of Krull super-dimension}
\author{A.Masuoka}
\address{Institute of Mathematics, University of Tsukuba, Ibaraki 305-8571, Japan}
\email{akira@math.tsukuba.ac.jp}
\author{A. N. Zubkov}
\address{Department of Mathematical Science, UAEU, Al-Ain, United Arabic Emirates; Sobolev Institute of Mathematics, Omsk Branch, Pevtzova 13, 644043 Omsk, Russian Federation}
\email{a.zubkov@yahoo.com}
\begin{abstract}
We introduce the notion of Krull super-dimension of a super-commutative super-ring. This notion is used to describe regular super-rings and calculate Krull super-dimensions of completions of super-rings. Moreover, we use this notion to introduce the notion of  super-dimension of any irreducible superscheme of finite type. Finally, we describe nonsingular superschemes in terms of sheaves of K\"{a}hler superdifferentials.  
\end{abstract}
\maketitle

\section*{introduction}

The concept of Krull dimension is one of the most fundamental concepts of the theory of commutative rings. On the basis of this concept, one can define the dimension of an algebraic variety or, more generally, the dimension of a scheme. In contrast, so far no one has defined the concept of super-dimension of a superscheme, which would be naturally derived from the properties of its sheaf of "super-functions".     
 
Motivated by the above, we introduce the concept of Krull super-dimension of a (super-commutative) Noetherian super-ring. We develop some fragment of dimension theory of Noetherian super-rings. For example, we show that any finitely generated superalgebra $A$ contains a polynomial super-subalgebra $C$ of the same Krull super-dimension, such that $A$ is a finitely generated $C$-supermodule. This result can be regarded as a super Noether Normalization Theorem. 

Further, we investigate local Noetherian super-rings and give a criteria when such a super-ring is regular (Proposition \ref{prop:regular_local} and Theorem \ref{thm:regular_local}). We generalize this criteria for all (not necessary local) Noetherian super-rings (Proposition \ref{prop:regular_global}). Note that our definition of regularity is based on the notion of Krull super-dimension, and we do not use the notion of regular sequence as in \cite{sm}, but our results coincide with the results therein (see Remark \ref{coincidenceofdefinitions}). Finally, we show how the super-dimension of a completion of Noetherian super-ring is determined by the super-dimensions of its localizations (Theorem \ref{super-dim of completion}).   

We also describe certain local Noetherian superalgebras in terms of K\"{a}hler superdifferentials
(Theorem \ref{regularity}). This result is used to describe nonsingular irreducible superschemes of finite type over a perfect field. More precisely, such a superscheme $X$ is nonsingular if and only if its sheaf of K\"{a}hler superdifferentials $\Omega_{X/K}$ is a locally free sheaf of $\mathcal{O}_X$-supermodules of rank equal to the super-dimension of $X$ (Theorem \ref{acriteriaofnonsingularity}).

In connection with the above result, one can note a new phenomena in the theory of superschemes, that does not take place in the theory of schemes. First, there are integral superschemes, which are singular at any point. Second, an integral superscheme $X$ may contain a proper closed integral super-subscheme $Y$ of the same super-dimension! Nevertheless, we show that if both $X$ and $Y$ are integral and {\it generically nonsingular}, then the coincidence of their super-dimensions implies $X=Y$ (Theorem \ref{coincidence}).  

The paper is organized as follows. In the first section we collect the necessary results on super-rings and supermodules. The second section is devoted to studying superschemes and sheaves of supermodules over them. In the third section we recall the definition of supermodule of relative differential forms and formulate some standard properties of it. These properties are generalized for sheaves of K\"{a}hler superdifferentials. 

The Krull super-dimension is introduced in the fourth section. As it has been mentioned above, we prove a super Noether Normalization Theorem and give a simple algorithm how to calculate the odd Krull dimension of a given finitely generated superalgebra (Lemma \ref{odd-Krull}). Using this algorithm, we construct a superalgebra $A$ and its quotient $A/I$, such that {\bf the odd Krull dimension of $A/I$ is greater than the odd Krull dimension of $A$}! We also discuss the case of one relation superalgebra and calculate the super-dimension of a completion of a Noetherian super-ring (Theorem \ref{super-dim of completion}.)

The fifth section is devoted to regular Noetherian super-ring (see above). In the sixth section we introduce the notion of super-dimension of an irreducible superscheme of finite type over a field. We characterize nonsingular superschemes, as well as their closed nonsingular super-subschemes, in terms of their sheaves of K\"{a}hler superdifferentials.

In the seventh section we formulate some questions and open problems, those would stimulate the further progress in the dimension theory of super-rings and superschemes. 

\section{Super-rings and supermodules}

\subsection{Super-rings}

A $\mathbb{Z}_2$-graded ring $R$ (with unity) is called a \emph{super-ring}. Let $r\mapsto |r|$ be a \emph{parity function} on the set of non-zero homogeneous elements of $R$, i.e. $|r|=i$ if and only if $r\in R_i, i\in\mathbb{Z}_2$. A homogeneous non-zero element $r$ is called \emph{even}, provided $|r|=0$, otherwise $r$ is called \emph{odd}.  

In what follows, all homomorphisms of super-rings are supposed to be graded, unless otherwise stated.

\subsection{Supermodules}

Let $R$ be a super-ring. A left $\mathbb{Z}_2$-graded $R$-module $M$ is called an $R$-\emph{supermodule}. The category of $R$-supermodules with graded morphisms (as well as the category of right $R$-supermodules), denoted by $_{R}\mathrm{mod}$ (respectively, by $\mathrm{mod}_R$), is obviously abelian.  

If $M$ is an $R$-supermodule, then let $m\mapsto |m|$ be a parity function on the set of non-zero homogeneous elements of $M$, i.e. $|m|=i$ if and only if $m\in M_i, i\in\mathbb{Z}_2$. As above, a non-zero homogeneous element $m$ is called \emph{even}, provided $|m|=0$, otherwise $m$ is called \emph{odd}.

\subsection{Super-commutative super-rings}

A super-ring $R = R_0\oplus R_1$ is said to be \emph{super-commutative}, if the following are satisfied:
\begin{itemize}
\item[(i)] $rs = sr$,\ either if\ $r, s \in R_0$,\ or if\ $r\in R_0$ and $s \in R_1$; 
\item[(ii)] $s^2= 0$,\ if\ $s \in R_1$. 
\end{itemize}
These two conditions are equivalent to $rs=(-1)^{|r||s|}sr$, provided $2$ is not zero divisor in $R$. On the other hand, if $2$ is a zero divisor in $R$, then $R$ is super-commutative if and only if $R$ is a commutative ring, which satisfies (ii).
\begin{example}\label{ex:exterior}
\emph{Let $B$ be a commutative ring, and let $M$ be an $B$-module. We let $\wedge_B(M)$ denote the exterior $B$-algebra on $M$. Supposing that all elements in $M$ are odd,
we regard $\wedge_B(M)$ as a super-commutative $B$-superalgebra. This is indeed the quotient 
of the tensor $R$-algebra $T_B(M)$ on $M$, which is an $B$-superalgebra, 
divided by the relation $m^2=0$, $m \in M$. If $M$ is finitely generated projective, then the
canonical map $M \to \wedge_B(M)$ is an injection, and $\wedge_B(M)$ is finitely generated projective
as an $B$-module.
If $M$ has finite $B$-free basis $Y_1,\dots, Y_s$, we write
\[ B[Y_1,\dots, Y_s] \]
for $\wedge_B(M)$, and call it the} polynomial $B$-superalgebra \emph{in odd indeterminants  $Y_1,\dots Y_s$.} 
\end{example}
\begin{example}\label{polynomialsuperalgebra}
\emph{More generally, for any superalgebra $A$ one can define a} polynomial $A$-superalgebra \emph{$A[X_1, \ldots X_m\mid Y_1, \ldots Y_n]$ in $m$ even indeterminants $X_1, \ldots , X_m$ and $n$ odd indeterminants $Y_1, \ldots, Y_n$, as 
\[A\otimes K[X_1, \ldots, X_m][Y_1, \ldots, Y_n]\]. If it does not lead to confusion, we use the shorter notation $A[X\mid Y]$.}
\end{example}

If $R$ is super-commutative, then any left $R$-supermodule $M$ can be regarded as an right $R$-supermodule, by setting $mr=(-1)^{|r||m|}rm, r\in R, m\in M$, and vice versa. In other words, the categories $_{R}\mathrm{mod}$ and $\mathrm{mod}_R$ are naturally isomorphic. Moreover, the category $_{R}\mathrm{mod}$ is a tensor category with a braiding
\[t_{M, N} : M\otimes_R N\simeq N\otimes_R M, m\otimes n\mapsto (-1)^{|m||n|}n\otimes m, m\in M, n\in N.\]

From now on all super-rings are assumed to be super-commutative. 

Let $R$ be a super-ring. It is obvious that any left super-ideal $I$ of $R$ is also right, hence two-sided. A super-ideal $\mathfrak{p}$ is called \emph{prime} (respectively, \emph{maximal}), provided $R/\mathfrak{p}$ is an integral domain (respectively, a field). A \emph{localization} of an $R$-supermodule $M$ at a prime super-ideal $\mathfrak{p}$ is defined as $M_{\mathfrak{p}}=(R_0\setminus\mathfrak{p}_0)^{-1}M$.

Let $I_R$ denote the super-ideal $RR_1$, and let $\overline{R}$ denote the quotient-ring $R/I_R$. A super-ideal $\mathfrak{p}$ is prime (respectively, maximal) if and only if $I_R\subseteq\mathfrak{p}$ and $\mathfrak{p}/I_R$ is prime (respectively, maximal) ideal of $\overline{R}$.  

The intersection of all prime super-ideals of $R$ coincides with the largest nil super-ideal of $R$, called a \emph{nil-radical} of $R$ and it is denoted by $\mathrm{nil}(R)$. It is obvious that
$I_R\subseteq\mathrm{nil}(R)$.

A super-ring $R$ with a unique maximal super-ideal is said to be \emph{local}. For example, if 
$\mathfrak{p}$ is a prime super-ideal of $R$, then its \emph{localizaion} $R_{\mathfrak{p}}$ is a local supe-ring with the maximal super-ideal $R_{\mathfrak{p}}\mathfrak{p}$.

A morphism $\alpha : R\to S$ between local super-rings is said to be \emph{local} if $\alpha(\mathfrak{m})\subseteq\mathfrak{n}$, where $\mathfrak{m}$ and $\mathfrak{n}$ are the unique maximal super-ideals of $R$ and $S$, respectively. 

\subsection{Super-vector spaces}

If $R$ is a field $K$, then an object $V$ from $_{K}\mathrm{mod}$ 
is called a \emph{super-vector space} over $K$, and the \emph{super-dimension} $\mathrm{sdim}_K(V)$
of $V$ is defined by $\mathrm{sdim}_K(V) = r \mid s$, where $r= \dim_K(V_0)$, $s = \dim_K(V_1)$.

\subsection{Superdomains and superfields}

A super-ring $A$ is called \emph{reduced}, if the ring $\overline{A}=A/I_A$ is reduced, i.e.
$I_A=\mathrm{nil}(A)$. 
The following lemma is a folklore.
\begin{lm}\label{reduced}
A super-ring $A$ is reduced if and only if for any prime super-ideal $\mathfrak{p}$ of $A$, the super-ring $A_{\mathfrak{p}}$ is.
\end{lm}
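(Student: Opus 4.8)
The plan is to reduce everything to the underlying commutative ring $\overline{A}=A/I_A$ and to invoke the classical fact that a commutative ring is reduced precisely when all of its localizations at prime ideals are reduced. By the very definition of reducedness, $A$ is reduced if and only if $\overline{A}$ is a reduced ring, and likewise $A_{\mathfrak{p}}$ is reduced if and only if $\overline{A_{\mathfrak{p}}}$ is reduced; so the key technical point is to show that forming $\overline{(-)}$ commutes with localization at a prime super-ideal.

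First I would record the bijection, already noted above, between prime super-ideals $\mathfrak{p}$ of $A$ and prime ideals $\overline{\mathfrak{p}}=\mathfrak{p}/I_A$ of $\overline{A}$ (a super-ideal is prime exactly when it contains $I_A$ and its image in $\overline{A}$ is prime). Writing $S=A_0\setminus\mathfrak{p}_0$ for the multiplicative set used in the localization, I would check that $I_{A_{\mathfrak{p}}}=S^{-1}I_A$. This is immediate: since $S\subseteq A_0$, localization respects the grading, so $(A_{\mathfrak{p}})_1=S^{-1}A_1$, and hence
\[ I_{A_{\mathfrak{p}}}=A_{\mathfrak{p}}(A_{\mathfrak{p}})_1=S^{-1}A\cdot S^{-1}A_1=S^{-1}(AA_1)=S^{-1}I_A. \]
Because localization is exact and commutes with quotients, it then follows that
\[ \overline{A_{\mathfrak{p}}}=A_{\mathfrak{p}}/I_{A_{\mathfrak{p}}}=S^{-1}A/S^{-1}I_A\cong S^{-1}\overline{A}\cong\overline{A}_{\overline{\mathfrak{p}}}. \]
The last isomorphism uses that $\overline{A}$ is purely even (indeed $\overline{A}_1=A_1/A_1=0$), so its super-localization coincides with the ordinary one, and that the image of $S$ in $\overline{A}$ is exactly $\overline{A}\setminus\overline{\mathfrak{p}}$ (the preimage of $\overline{\mathfrak{p}}$ in $A_0$ is $\mathfrak{p}_0$, as $I_A\subseteq\mathfrak{p}$). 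Thus $A_{\mathfrak{p}}$ is reduced if and only if the commutative local ring $\overline{A}_{\overline{\mathfrak{p}}}$ is reduced.

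With this identification the equivalence follows formally. If $A$ is reduced, then $\overline{A}$ is reduced, hence each localization $\overline{A}_{\overline{\mathfrak{p}}}$ is reduced (a localization of a reduced commutative ring is reduced), and therefore each $A_{\mathfrak{p}}$ is reduced. Conversely, if every $A_{\mathfrak{p}}$ is reduced, then every $\overline{A}_{\overline{\mathfrak{p}}}$ is reduced; since the prime super-ideals $\mathfrak{p}$ exhaust all primes $\overline{\mathfrak{p}}$ of $\overline{A}$, and a commutative ring whose localizations at all primes are reduced is itself reduced (the nilradical localizes), we conclude that $\overline{A}$, and hence $A$, is reduced.

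I do not anticipate a serious obstacle: the entire content is the translation to $\overline{A}$, and the only step demanding care is the compatibility $\overline{A_{\mathfrak{p}}}\cong\overline{A}_{\overline{\mathfrak{p}}}$, which rests on the elementary identity $I_{A_{\mathfrak{p}}}=S^{-1}I_A$ together with the observation that $\overline{A}$ carries the trivial grading, so that super-localization and ordinary localization agree on it.
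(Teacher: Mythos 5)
Your proof is correct, and it is the natural argument: the paper itself offers no proof of this lemma (it is stated as ``folklore''), so there is nothing to diverge from. The two points that carry the content --- the identity $I_{A_{\mathfrak{p}}}=S^{-1}I_A$ giving $\overline{A_{\mathfrak{p}}}\cong\overline{A}_{\overline{\mathfrak{p}}}$, and the bijection between prime super-ideals of $A$ and primes of $\overline{A}$ --- are both verified correctly, after which the classical local criterion for reducedness of commutative rings finishes the job.
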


We say that a super-ring $A$ is an \emph{integral superdomain} (or just {\it superdomain}), if $I_A$ is a prime super-ideal of $A$. If, additionally, the natural superalgebra morphism $A\to A_{I_A}$ is injective, then $A$ is a {\it strong} superdomain. The additional condition is
equivalent to that for any $s\in A_0\setminus A^2_1$ and any $a\in A$ the equality $sa = 0$ implies
$a = 0$. This, in turn, is equivalent to the apparently stronger condition that for any
$s\in A\setminus I_A$ and any $a\in A$ the equality $sa = 0$ implies $a = 0$. Indeed, if $sa = 0$, then
$s_0 a = −s_1 a$, whence $s_0(s_1 a) = s^2_1 a =0$. The former condition ensures first $s_1 a = 0$,
then $s_0 a = 0$, and finally $a = 0$.
 
A superring $A$ is said to be a {\it superfield} if any element $a\in A\setminus I_A$ is invertible, or equivalently, if any $a\in A_0\setminus A_1^2$ is invertible. In other words, $A$ is a superfield if and only if $A$ is a local super-ring with the unique maximal super-ideal $I_A$ if and only if $I_A$ is a maximal super-ideal. Obviously, a superfield is a strong superdomain.
For example, the polynomial superalgebra $\wedge_K(Y_1, \ldots, Y_r)$ over a field $K$ in odd indeterminants  $Y_1, \ldots , Y_r$, is a superfield.

 If $A$ is a superdomain, then $A_{I_A}$ is obviously a superfield that is called a {\it superfield of fractions} of $A$ and it is denoted by $SQ(A)$.

\subsection{Noetherian super-rings}

Recall that a super-ring $R$ is called \emph{Noetherian} if the super-ideals of $R$ satisfy ascending chain condition (ACC). As it has been proven in \cite{maszub1}, $R$ is Noetherian if and only if it is left or right Noetherian as a ring. 
\begin{lm}\label{Noetherian}
A super-ring $R$ is Noetherian if and only if $R_0$ is a Noetherian ring and the $R_0$-module $R_1$ is finitely generated.
\end{lm}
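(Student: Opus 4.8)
The plan is to prove both implications directly from the definition of a Noetherian super-ring as one satisfying ACC on super-ideals, without passing through the ring-theoretic characterization of \cite{maszub1} (although that characterization would also yield the converse). Throughout I will use only the grading identities $R_0R_1\subseteq R_1$ and $R_1R_1\subseteq R_0$ coming from super-commutativity, together with the fact that $R_0$ is central in $R$.

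For the converse, suppose $R_0$ is Noetherian and $R_1$ is finitely generated as an $R_0$-module. Then $R_1$ is a Noetherian $R_0$-module, and so is $R_0$ (as a module over itself), whence $R=R_0\oplus R_1$ is a Noetherian $R_0$-module, being a finite direct sum of Noetherian modules. Now every super-ideal $I$ of $R$ satisfies $R_0I\subseteq I$, so it is in particular an $R_0$-submodule of $R$. Consequently every ascending chain of super-ideals is an ascending chain of $R_0$-submodules of $R$, which must stabilize; this is exactly ACC on super-ideals, so $R$ is Noetherian.

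For the forward direction, assume $R$ satisfies ACC on super-ideals. The key device is a pair of liftings. To an ideal $J\subseteq R_0$ I associate the super-ideal $RJ=J\oplus R_1J$; since $J\subseteq R_0$ and $R_1J\subseteq R_1$, its even part is exactly $J$, so $J$ is recovered from $RJ$. To an $R_0$-submodule $M\subseteq R_1$ I associate $RM=R_1M\oplus M$, which is a super-ideal because $R_1M\subseteq R_1R_1\subseteq R_0$ and $(R_1R_1)M\subseteq R_0M=M$; here the odd part of $RM$ is exactly $M$, so $M$ is recovered from $RM$. Because these correspondences are injective on the respective graded pieces, a strictly ascending chain of ideals of $R_0$ (respectively, of $R_0$-submodules of $R_1$) lifts to a strictly ascending chain of super-ideals of $R$, contradicting ACC. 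This shows simultaneously that $R_0$ is Noetherian and that $R_1$ is a Noetherian $R_0$-module, the latter forcing $R_1$ to be finitely generated.

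The only genuinely delicate point is the verification underlying these liftings: that $RJ$ and $RM$ are honest super-ideals, and, crucially, that extracting the even part of $RJ$ returns $J$ and extracting the odd part of $RM$ returns $M$, so that strict inclusions are preserved along the chain. Everything here reduces to the grading identities above, in particular to $(R_1R_1)M\subseteq M$ for the closure of $RM$ under multiplication by $R_1$; once these are checked, both directions follow with no further computation.
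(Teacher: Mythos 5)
Your proof is correct and follows essentially the same route as the paper: the "if" direction via ACC on $R_0$-submodules of $R=R_0\oplus R_1$, and the "only if" direction by lifting a chain of ideals $J\subseteq R_0$ to the super-ideals $J\oplus R_1J$ and a chain of submodules $M\subseteq R_1$ to $R_1M\oplus M$, which is exactly the paper's construction. Your explicit check that the graded pieces recover $J$ and $M$ (so strictness is preserved) is a point the paper leaves implicit, but the argument is the same.
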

\begin{proof}
The "if" part. The assumptions imply that the $R_0$-submodules of $R$ satisfy the ACC, whence
$R$ is Noetherian. 

The "only if" part. Given an ascending chain
\[
\mathfrak{r}_0\subset \mathfrak{r}_1 \subset \dots 
\]
of ideals of $R_0$ (respectively, of $R_0$-submodules of $R_1$), we have the ascending chain 
\[
\mathfrak{r}_0\oplus \mathfrak{r}_0 R_1 \subset \mathfrak{r}_1\oplus \mathfrak{r}_1 R_1 \subset 
\dots \quad \text{(respectively,}\ 
\mathfrak{r}_0 R_1\oplus \mathfrak{r}_0 \subset \mathfrak{r}_1 R_1\oplus \mathfrak{r}_1 \subset 
\dots)
\]
of super-ideals of $R$. Therefore, if $R$ is Noetherian, then $R_0$ is Noetherian, and 
$R_1$ is a Noetherian $R_0$-module as well, hence finitely generated.
\end{proof}

\subsection{Completion}\label{subsec:completion}

Let $R$ be a super-ring and $M$ be an $R$-supermodule. If $I$ is a super-ideal of $R$, then $M$ can be endowed with the $I$-\emph{adic} topology so that a subset $U\subseteq M$ is open if and only if
$U=\cup_{j\in J}(m_j+I^{k_j}M)$ for some $m_j\in M, k_j\in\mathbb{Z}_{\geq 0}$. In other words, $R$ turns into a topological super-ring and $M$ turns into a topological $R$-supermodule with respect to their $I$-adic topologies.

Similarly, $M$, and $R$ as well, can be endowed with $I_0$-adic topologies, being regarded as $R_0$-modules. It is clear that the $I_0$-adic topologies of both $M$ and $R$ coincide with their $RI_0$-adic topologies.  
\begin{lm}\label{coincidenceoftopologies}
These two topologies coincide.
\end{lm}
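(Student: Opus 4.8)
The plan is to compare the two descending filtrations $\{I^n M\}_{n\ge 0}$ and $\{I_0^n M\}_{n\ge 0}$ that define the two topologies, and to show that each is cofinal in the other; this is the standard criterion for two adic topologies to agree. Since the paragraph preceding the lemma already identifies the $I_0$-adic topology on $M$ with the $RI_0$-adic one, it suffices to compare the $I$-adic and the $RI_0$-adic topologies, i.e. the filtrations $\{I^n M\}$ and $\{(RI_0)^n M\}$.

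One inclusion is immediate: as $I_0\subseteq I$ we have $RI_0\subseteq I$, hence $(RI_0)^n M\subseteq I^n M$ for every $n$, so the $RI_0$-adic topology is a priori finer than the $I$-adic one. The crux is the reverse cofinality, and it rests on a single super-commutative observation: writing $I=I_0\oplus I_1$ with $I_1=I\cap R_1$, the product of two odd elements is even, so $I_1 I_1\subseteq R_0\cap I=I_0$. Combining this with the trivial inclusions $I_0 I_0\subseteq I_0$ and $I_0 I_1, I_1 I_0\subseteq R_1 I_0$, one checks termwise that
\[ I^2 = I_0 I_0 + I_0 I_1 + I_1 I_0 + I_1 I_1 \subseteq RI_0. \]
This is the main, and essentially the only nontrivial, step. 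Iterating gives $I^{2n}=(I^2)^n\subseteq (RI_0)^n$, whence $I^{2n} M\subseteq (RI_0)^n M$ for all $n$.

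The two cofinality relations $(RI_0)^n M\subseteq I^n M$ and $I^{2n} M\subseteq (RI_0)^n M$ together show that the neighborhood bases of $0$ for the two topologies are mutually cofinal, so the $I$-adic and $RI_0$-adic topologies on $M$ coincide; invoking the identification of the latter with the $I_0$-adic topology noted just before the lemma finishes the argument. The same reasoning applies verbatim to $R$ in place of $M$. I do not expect any genuine obstacle beyond establishing $I^2\subseteq RI_0$; it is worth noting that no Noetherian or finite-generation hypothesis is needed here, since nilpotence of the odd part plays no role in the cofinality estimate — only the parity bookkeeping $I_1 I_1\subseteq I_0$ does.
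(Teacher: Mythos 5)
Your argument is correct and is essentially the paper's own proof: both rest on the single observation $I_1I_1\subseteq I_0$ and establish mutual cofinality of the filtrations $\{I^nM\}$ and $\{I_0^nM\}$, the paper via the direct estimate $I^k\subseteq I_0^{[k/2]}I$ and you via the equivalent $I^2\subseteq RI_0$ followed by iteration. No gap; your remark that no Noetherian hypothesis is needed is also consistent with the paper, which states the lemma before any finiteness assumptions are imposed.
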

\begin{proof}
It suffices to prove that for any integer $k> 0$, $I^k M\subseteq I_0^{[\frac{k}{2}]}M\subseteq I^{[\frac{k}{2}]}M$. The non-trivial first inclusion follows, by using $I_1^2\subseteq I_0$, as follows.  
\[I^k=\sum_{0\leq i\leq k}I_0^{k-i} I_1^i\subseteq\sum_{i \ even} I_0^{k-\frac{i}{2}}+\sum_{i \  odd}I_0^{k-\frac{i+1}{2}}I_1\subseteq I_0^{[\frac{k}{2}]}I.\] 
\end{proof}
The $I$-adic completion 
\[\widehat{R} = \underset{\longleftarrow}{\lim}\, R/I^n\]
of $R$ is naturally a topological super-ring, in which the super-ideals $\widehat{R}(k)=\ker(\widehat{R}\to R/I^k)$ form a base of neighborhhods of zero. 

We have the canonical map $R \to \widehat{R}$ of topological super-rings.
If it is an isomorphism, we say that $R$ is \emph{complete} with respect to its $I$-adic topology.

Finally, if $R$ is a local super-ring with a maximal super-ideal $\mathfrak{m}$, then we just say that $R$ is \emph{complete}, provided $R$ is complete with respect to its $\mathfrak{m}$-adic topology.

Similarly, the $I$-adic completion 
\[\widehat{M} = \underset{\longleftarrow}{\lim}\, M/I^n M\]
of an $R$-supermodule $M$ has a natural structure of a topological $\widehat{R}$-supermodule, in which the super-submodules $\widehat{M}(k)=\ker(\widehat{M}\to M/I^k M)$ form a base of neighborhhods of zero.  We also have the canonical map $M \to \widehat{M}$ of topological $R$-supermodules.
\begin{rem}\label{I_0-adic and I-adic}
\emph{Lemma \ref{coincidenceoftopologies} implies that 
\[\widehat{R}\simeq \underset{\longleftarrow}{\lim}\, R/I_0^n R,\]
and, therefore, the homogeneous components $\widehat{R}_i, i=0, 1$, are isomorphic to the $I_0$-adic completion of $R_i$. The similar statement holds for $R$-supermodules.}
\end{rem}
Observe that $\widehat{R}$ and $\widehat{M}$ are naturally isomorphic to $\widehat{R'}$ and $\widehat{M'}$, where $R'=R/\cap_{n\geq 0}I^n$ and $M'=M/\cap_{n\geq 0}I^n M$. Moreover, $R'$ and $M'$  are Hausdorff spaces with respect to their $I'$-adic topologies, where $I'=I/\cap_{n\geq 0}I^n$.  
Using this remark, one can easily superize Theorem 5 and Corollary 1, \cite{zs}, chapter VIII, as follows. 
\begin{lm}\label{apropertyofcompletion}
Let $M$ be a finitely generated $R$-supermodule. Then $\widehat{M}=\widehat{R}M$. 
\end{lm}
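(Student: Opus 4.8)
The plan is to reduce the statement to two ingredients: the trivial computation of the completion of a free supermodule, and the surjectivity of the completion of a presentation map. Since $M$ is finitely generated and $\mathbb{Z}_2$-graded, I first choose finitely many \emph{homogeneous} generators $m_1,\dots,m_k$ and form the free $R$-supermodule $F=\bigoplus_{i=1}^k R e_i$ with $|e_i|=|m_i|$, together with the graded surjection $\phi\colon F\to M$, $e_i\mapsto m_i$. Writing $N=\ker\phi$, this gives a short exact sequence $0\to N\to F\xrightarrow{\phi} M\to 0$ of $R$-supermodules. Throughout, let $\iota\colon M\to\widehat{M}$ denote the canonical map.

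For the free module $F$, since $I^nF=\bigoplus_i I^n e_i$ and the inverse limit commutes with finite direct sums, I obtain $\widehat{F}=\bigoplus_{i=1}^k\widehat{Re_i}=\bigoplus_{i=1}^k\widehat{R}\,\hat e_i$, where $\hat e_i$ is the image of $e_i$ in $\widehat{F}$. In particular $\widehat{F}=\widehat{R}F$, i.e. the images of the $e_i$ generate $\widehat{F}$ over $\widehat{R}$.

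The key step is to show that the induced map $\widehat{\phi}\colon\widehat{F}\to\widehat{M}$ is surjective. Because $\phi(I^nF)=I^nM$, for each $n$ one has the short exact sequence
\[
0\to K_n\to F/I^nF\xrightarrow{\ \phi\ } M/I^nM\to 0,\qquad K_n=(N+I^nF)/I^nF\cong N/(N\cap I^nF).
\]
The transition maps of the inverse system $\{K_n\}$ are the natural quotient maps $N/(N\cap I^{n+1}F)\to N/(N\cap I^nF)$, hence surjective; therefore $\{K_n\}$ satisfies the Mittag--Leffler condition and $\varprojlim^{1}K_n=0$. Passing to the inverse limit over $n$ yields the exact sequence $0\to\varprojlim K_n\to\widehat{F}\xrightarrow{\widehat{\phi}}\widehat{M}\to 0$, so $\widehat{\phi}$ is surjective. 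The $\mathbb{Z}_2$-grading plays no role beyond the fact that all maps are graded, so this is exactly the superization of Theorem 5 and Corollary 1 of \cite{zs}, Chapter VIII; notably, no Noetherian hypothesis on $R$ is needed.

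Finally I assemble the two parts. Since $\widehat{\phi}$ is $\widehat{R}$-linear and surjective and $\widehat{F}=\widehat{R}\{\hat e_1,\dots,\hat e_k\}$, we get $\widehat{M}=\widehat{\phi}(\widehat{F})=\widehat{R}\{\widehat{\phi}(\hat e_1),\dots,\widehat{\phi}(\hat e_k)\}$. But $\widehat{\phi}(\hat e_i)=\iota(m_i)$, so $\widehat{M}=\widehat{R}\cdot\iota(M)=\widehat{R}M$, as claimed. The main obstacle, and the only place requiring more than formal manipulation, is the surjectivity of $\widehat{\phi}$, which hinges on the vanishing of $\varprojlim^{1}K_n$; here this vanishing comes for free from the surjectivity of the transition maps of $\{K_n\}$.
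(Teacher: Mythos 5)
Your proof is correct, and it is essentially the argument the paper has in mind: the paper simply defers to Theorem 5 and Corollary 1 of \cite{zs}, Chapter VIII, whose proof is exactly the surjectivity of $\widehat{F}\to\widehat{M}$ for a finite free presentation, which you have written out (in the slightly more modern Mittag--Leffler packaging). Your observation that no Noetherian hypothesis is needed is also consistent with the paper, which states the lemma before any Noetherian assumption is imposed.
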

\begin{cor}
Assume additionally that $I$ is finitely generated. Then $\widehat{M}(k)=\widehat{I^k M}=I^k\widehat{M}$ and $\widehat{R}(k)=\widehat{I^k}=I^k \widehat{R}=\widehat{I}^k$ for any $k\geq 0$. 
\end{cor}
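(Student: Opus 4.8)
The plan is to prove the module statement first and obtain the ring statement as the special case $M=R$. The whole corollary rests on two separate inputs: a purely formal inverse-limit computation that yields the left equality $\widehat M(k)=\widehat{I^kM}$ with no finiteness hypothesis at all, and Lemma \ref{apropertyofcompletion}, which upgrades the middle term to $I^k\widehat M$ and is the only place where finite generation of $I$ (and of $M$) is used.

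First I would establish $\widehat M(k)=\widehat{I^kM}$. For every $n\ge k$ there is a short exact sequence
\[0\to I^kM/I^nM\to M/I^nM\to M/I^kM\to 0,\]
and these assemble into an inverse system of short exact sequences as $n$ varies, the right-hand term being the constant system $M/I^kM$. Applying the left-exact functor $\varprojlim_n$ gives
\[0\to\varprojlim_n I^kM/I^nM\to\widehat M\to M/I^kM,\]
so that $\widehat M(k)=\ker(\widehat M\to M/I^kM)=\varprojlim_n I^kM/I^nM$. The key observation is that the intrinsic $I$-adic filtration of $N:=I^kM$ coincides on the nose with the induced one: $I^mN=I^m(I^kM)=I^{m+k}M$, hence $N/I^mN=I^kM/I^{m+k}M$. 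Thus no Artin--Rees argument is needed, and reindexing identifies $\varprojlim_n I^kM/I^nM$ with $\varprojlim_m N/I^mN=\widehat N=\widehat{I^kM}$, the natural map $\widehat{I^kM}\to\widehat M$ being exactly the claimed inclusion.

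Next I would prove $\widehat{I^kM}=I^k\widehat M$. Since $I$ is finitely generated, so is the ideal $I^k$, and as $M$ is finitely generated the submodule $I^kM$ is a finitely generated $R$-supermodule as well. Hence Lemma \ref{apropertyofcompletion} applies both to $M$ and to $I^kM$, giving $\widehat M=\widehat R M$ and $\widehat{I^kM}=\widehat R\cdot I^kM$ inside $\widehat M$. Choosing homogeneous generators $b_1,\dots,b_p$ of $I^k$ and $m_1,\dots,m_q$ of $M$, both modules are the $\widehat R$-span of the finite set $\{b_s m_t\}$; concretely $I^k\widehat M=I^k\widehat R M=\widehat R\cdot I^kM=\widehat{I^kM}$. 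This is the only computational point, and it is routine.

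Finally the ring statement is the case $M=R$: it yields $\widehat R(k)=\widehat{I^k}=I^k\widehat R$ at once, and it remains to compare with $\widehat I^{\,k}$. Applying Lemma \ref{apropertyofcompletion} to the finitely generated module $I$ gives $\widehat I=I\widehat R$, whence $\widehat I^{\,k}=(I\widehat R)^k=I^k\widehat R$, closing the chain of equalities. The only step demanding genuine care is the identification in the second paragraph of the intrinsic completion $\widehat{I^kM}$ with the subobject $\widehat M(k)\subseteq\widehat M$; once one notes that $I^m(I^kM)=I^{m+k}M$ makes the two filtrations literally equal, everything else is either formal or a direct appeal to Lemma \ref{apropertyofcompletion}.
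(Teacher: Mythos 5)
Your proof is correct, and it is essentially the standard argument that the paper delegates without writing out (the paper merely cites Theorem 5 and Corollary 1 of \cite{zs}, Chapter VIII): the formal inverse-limit identification $\widehat M(k)=\widehat{I^kM}$ followed by Lemma \ref{apropertyofcompletion} applied to $M$, $I^kM$ and $I$. The one step worth flagging as genuinely load-bearing --- the observation that $I^m(I^kM)=I^{m+k}M$ makes the induced and intrinsic filtrations literally coincide, so no Artin--Rees input is needed --- is exactly right.
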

\begin{pr}\label{completionofNoetherian}
Let $R$ be a Noetherian super-ring and $I$ be a super-ideal of $R$. Then the functor $M\to\widehat{M}$, that takes a finitely generated $R$-supermodule to its $I$-adic completion, is exact. Moreover, $\widehat{R}$ is a Noetherian super-ring, hence $\widehat{M}$ is finitely generated whenever $M$ is.
\end{pr}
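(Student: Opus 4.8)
The plan is to reduce the whole statement to the classical commutative theory by passing to the even part. The key observation is that, by Lemma~\ref{coincidenceoftopologies} together with Remark~\ref{I_0-adic and I-adic}, the $I$-adic topology on any $R$-supermodule $M$ coincides with its $RI_0$-adic topology, and since $I_0$ consists of even, hence central, elements one has $(RI_0)^n M = I_0^n M$ as $R_0$-modules. Consequently $\widehat{M}=\varprojlim M/I_0^n M$ is nothing but the $I_0$-adic completion of $M$ regarded as a module over the commutative ring $R_0$, which is Noetherian by Lemma~\ref{Noetherian}. Moreover the decomposition $M=M_0\oplus M_1$ of $R_0$-modules yields $M/I_0^n M=M_0/I_0^n M_0\oplus M_1/I_0^n M_1$, so that $\widehat{M}=\widehat{M_0}\oplus\widehat{M_1}$ and the grading is simply recovered by completing each component. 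Finally, since $R_1$ is a finitely generated $R_0$-module, $R$ is module-finite over $R_0$, and hence every finitely generated $R$-supermodule is already finitely generated as an $R_0$-module.

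With this dictionary in place, exactness becomes immediate. A short exact sequence $0\to M'\to M\to M''\to 0$ of finitely generated $R$-supermodules is, after forgetting the extra structure, a short exact sequence of finitely generated $R_0$-modules; and a sequence of $R$-supermodules is exact precisely when the underlying sequence of $R_0$-modules is. The classical fact that $I_0$-adic completion is an exact functor on finitely generated modules over the Noetherian ring $R_0$ (see \cite{zs}, Chapter~VIII) therefore gives that $0\to\widehat{M'}\to\widehat{M}\to\widehat{M''}\to 0$ is exact as $R_0$-modules, hence as $R$-supermodules. I expect the only genuine content here to be the Artin--Rees argument underlying this commutative statement, which is imported rather than reproved.

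For the second assertion I would argue again through the even part. By the componentwise description, $(\widehat{R})_0=\widehat{R_0}$ is the $I_0$-adic completion of the Noetherian ring $R_0$, hence Noetherian by the classical theorem, while $(\widehat{R})_1=\widehat{R_1}\cong\widehat{R_0}\otimes_{R_0}R_1$ is a finitely generated $\widehat{R_0}$-module because $R_1$ is finitely generated over $R_0$. Lemma~\ref{Noetherian}, applied now to $\widehat{R}$, then shows that $\widehat{R}$ is a Noetherian super-ring. The finite generation of $\widehat{M}$ follows from Lemma~\ref{apropertyofcompletion}: if $m_1,\dots,m_k$ generate $M$ over $R$, then $\widehat{M}=\widehat{R}M=\sum_i\widehat{R}m_i$ is generated over $\widehat{R}$ by their images.

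The main obstacle, such as it is, lies entirely in setting up the reduction correctly: one must verify that the super $I$-adic completion really coincides, as a graded object, with the $I_0$-adic completion of the underlying $R_0$-module, and that exactness of supermodule sequences can be tested on underlying modules. Once this identification is secured, no difficulty beyond the classical commutative results arises, so I would concentrate the care on the bookkeeping of the first step and merely cite the rest.
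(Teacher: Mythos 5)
Your proposal is correct and follows essentially the same route as the paper: the authors likewise reduce to the even part via Lemma \ref{Noetherian} (finitely generated $R$-supermodules are finitely generated $R_0$-modules), identify the $I$-adic with the $I_0$-adic completion via Lemma \ref{coincidenceoftopologies} and Remark \ref{I_0-adic and I-adic}, and then invoke the classical exactness and Noetherianity theorems together with Lemma \ref{apropertyofcompletion}. Your write-up merely spells out the bookkeeping that the paper leaves implicit.
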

\begin{proof}
By Lemma \ref{Noetherian} any finitely generated $R$-supermodule is finitely generated as an $R_0$-module. It remains to  combine Lemma \ref{coincidenceoftopologies}, Remark \ref{I_0-adic and I-adic} and Lemma \ref{apropertyofcompletion} with Theorem 54 and 23(K), \cite{mats}.
\end{proof}
\begin{pr}\label{Hausdorff}
Let $R$ be a Noetherian super-ring, $I$ be a super-ideal of $R$, $M$ be a finitely generated $R$-supermodule. Then $\cap_{k\geq 0}I^k M$ consists of all elements $m\in M$ such that there is $x\in I_0$ with $(1-x)m=0$. In particular, if $R$ is a local super-ring with a unique maximal super-ideal $\mathfrak{m}$, then $\cap_{k\geq 0}\mathfrak{m}^k M=0$, that is $M$ is a Hausdorff space with respect to its $\mathfrak{m}$-adic topology. 
\end{pr}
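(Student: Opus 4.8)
The plan is to reduce the super-statement to the classical Krull intersection theorem over the even commutative ring $R_0$. First I would record two consequences of Lemma \ref{Noetherian}: the ring $R_0$ is Noetherian, and the finitely generated $R$-supermodule $M$ is finitely generated as an $R_0$-module. Writing $I_0=I\cap R_0$ for the even part of $I$, an honest ideal of $R_0$, I would invoke Lemma \ref{coincidenceoftopologies}: the inclusions $I^k M\subseteq I_0^{[k/2]}M\subseteq I^{[k/2]}M$ hold for every $k$, so the two descending filtrations $\{I^k M\}_k$ and $\{I_0^k M\}_k$ are cofinal in one another. Consequently $\bigcap_{k\ge 0} I^k M=\bigcap_{k\ge 0} I_0^k M$, where the right-hand side is now a purely commutative object: the stable intersection of the $I_0$-adic filtration of the finitely generated module $M$ over the Noetherian ring $R_0$.

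Next I would apply the ordinary Krull intersection theorem to $M$ over $R_0$ with respect to $I_0$. Set $N:=\bigcap_k I_0^k M$. The inclusion of the asserted set into $N$ is immediate, since $x\in I_0$ and $(1-x)m=0$ give $m=xm=x^2m=\cdots=x^k m\in I_0^k M$ for all $k$. For the reverse inclusion one runs the classical argument: Artin--Rees yields $I_0 N=N$, and since $N$ is finitely generated over the Noetherian ring $R_0$, the Cayley--Hamilton (determinant) trick produces an element $x\in I_0$ with $(1-x)N=0$. Thus $N=\{m\in M:(1-x)m=0 \text{ for some } x\in I_0\}$, and combining with the first paragraph gives the first assertion of the proposition.

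For the final ``in particular'' statement I would show that $R_0$ is a local ring with maximal ideal $\mathfrak{m}_0=\mathfrak{m}\cap R_0$. Since $I_R\subseteq\mathrm{nil}(R)$, the even ideal $R_1^2\subseteq R_0$ is nil, hence lies in every maximal ideal of $R_0$; therefore the maximal ideals of $R_0$ correspond bijectively to those of $\overline{R}=R_0/R_1^2$, which in turn correspond to the maximal super-ideals of $R$. As $R$ has the unique maximal super-ideal $\mathfrak{m}$, the ring $R_0$ is local with maximal ideal $\mathfrak{m}_0$. Taking $I=\mathfrak{m}$, any $x\in\mathfrak{m}_0$ makes $1-x$ a unit of $R_0$, so $(1-x)m=0$ forces $m=0$; hence $\bigcap_k\mathfrak{m}^k M=0$ and $M$ is Hausdorff in its $\mathfrak{m}$-adic topology. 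The only genuinely super ingredient is the cofinality reduction of the first paragraph; the step demanding the most care is the identification of $R_0$ as a local ring, where one must use $I_R\subseteq\mathrm{nil}(R)$ to rule out extra maximal ideals of $R_0$ beyond the one coming from $\mathfrak{m}$.
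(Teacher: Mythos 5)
Your proposal is correct and follows essentially the same route as the paper: reduce to the even part via Lemma \ref{coincidenceoftopologies} (so that $\cap_k I^kM=\cap_k I_0^kM$ with $M$ finitely generated over the Noetherian ring $R_0$ by Lemma \ref{Noetherian}), then invoke the classical Krull intersection theorem, which the paper cites from Bourbaki in exactly the $(1-x)m=0$ form. You merely spell out two steps the paper leaves implicit — the Artin--Rees/determinant argument and the verification that $R_0$ is local with maximal ideal $\mathfrak{m}_0$ — both of which are done correctly.
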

\begin{proof}
As was already observed, $M$ is a finitely generated $R_0$-module and $\cap_{k\geq 0}I^k M=\cap_{k\geq 0}I_0^k M$. Then Krull intersection theorem (see \cite{bur}, III, \S 3, Proposition 5) concludes the proof.
\end{proof}
\begin{lm}\label{propertiesoflocal}
Let $R$ be a local Noetherian super-ring with a maximal super-ideal $\mathfrak{m}$. For its $\mathfrak{m}$-adic completion $\widehat{R}$ the following hold :
\begin{itemize}
\item[(1)] $\widehat{R}$ is a complete Noetherian local super-ring with maximal $\widehat{\mathfrak{m}}$. 
\item[(2)] $I_{\widehat{R}}$ coincides with $\widehat{I_R}=\widehat{R}R_1$. 
\item[(3)] $\overline{\widehat{R}} = \widehat{R}/I_{\widehat{R}}$ coincides with the $\overline{\mathfrak{m}}$-adic completion $\widehat{\overline{R}}$ of $\overline{R}$,
where $\overline{\mathfrak{m}}=\mathfrak{m}_0/R_1^2$. 
\end{itemize}
\end{lm}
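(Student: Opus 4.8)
The plan is to derive all three statements from the completion machinery established above, handling the items in order; only (2) genuinely involves the odd part.

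\emph{Item (1).} Noetherianity is immediate from Proposition \ref{completionofNoetherian} with $I=\mathfrak{m}$, the super-ideal $\mathfrak{m}$ being finitely generated since $R$ is Noetherian. For the remaining assertions I would first invoke the Corollary following Lemma \ref{apropertyofcompletion} to get $\widehat{\mathfrak{m}}^{k}=\widehat{R}(k)=\ker(\widehat{R}\to R/\mathfrak{m}^{k})$, so that $\widehat{R}/\widehat{\mathfrak{m}}^{k}\cong R/\mathfrak{m}^{k}$ for every $k$. Passing to the inverse limit shows $\widehat{R}=\varprojlim\widehat{R}/\widehat{\mathfrak{m}}^{k}$ and $\cap_{k}\widehat{\mathfrak{m}}^{k}=0$, i.e. $\widehat{R}$ is Hausdorff and complete for its $\widehat{\mathfrak{m}}$-adic topology. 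The case $k=1$ gives $\widehat{R}/\widehat{\mathfrak{m}}\cong R/\mathfrak{m}$, a field, so $\widehat{\mathfrak{m}}$ is a maximal super-ideal. Finally, for $x\in\widehat{\mathfrak{m}}$ the partial sums of $\sum_{n\ge 0}x^{n}$ form a Cauchy sequence, hence converge, and the limit inverts $1-x$; thus $1+\widehat{\mathfrak{m}}\subseteq\widehat{R}^{\times}$. Any $a\in\widehat{R}\setminus\widehat{\mathfrak{m}}$ has invertible image in the field $\widehat{R}/\widehat{\mathfrak{m}}$, so $ab\equiv 1\pmod{\widehat{\mathfrak{m}}}$ for some $b$, forcing $a$ to be a unit; hence $\widehat{\mathfrak{m}}$ is the unique maximal super-ideal and $\widehat{R}$ is local.

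\emph{Item (2).} The key is to compute the odd component of $\widehat{R}$. By Remark \ref{I_0-adic and I-adic}, $\widehat{R}_{1}$ is the $\mathfrak{m}_{0}$-adic completion of the $R_{0}$-module $R_{1}$; since $R_{1}$ is finitely generated over the Noetherian ring $R_{0}$ (Lemma \ref{Noetherian}), the purely even instance of Lemma \ref{apropertyofcompletion} gives $\widehat{R}_{1}=\widehat{R}_{0}R_{1}$, where $R_{1}$ is identified with its image in $\widehat{R}$. Hence $I_{\widehat{R}}=\widehat{R}\,\widehat{R}_{1}=\widehat{R}\,\widehat{R}_{0}R_{1}=\widehat{R}R_{1}$. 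On the other hand, Lemma \ref{apropertyofcompletion} applied to the finitely generated super-ideal $I_{R}=RR_{1}$ yields $\widehat{I_{R}}=\widehat{R}I_{R}=\widehat{R}R_{1}$, and the two expressions coincide.

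\emph{Item (3).} I would first match the topologies. The image of $\mathfrak{m}^{k}$ in $\overline{R}=R_{0}/R_{1}^{2}$ is $(\mathfrak{m}^{k}+I_{R})/I_{R}=\overline{\mathfrak{m}}^{\,k}$ with $\overline{\mathfrak{m}}=\mathfrak{m}_{0}/R_{1}^{2}$, so the $\mathfrak{m}$-adic topology on the $R$-module $\overline{R}$ coincides with the $\overline{\mathfrak{m}}$-adic topology on the ring $\overline{R}$; in particular the corresponding completions agree and are both denoted $\widehat{\overline{R}}$. Completing the short exact sequence $0\to I_{R}\to R\to\overline{R}\to 0$ of finitely generated $R$-supermodules, the exactness part of Proposition \ref{completionofNoetherian} gives $\widehat{\overline{R}}\cong\widehat{R}/\widehat{I_{R}}$. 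Together with $\widehat{I_{R}}=I_{\widehat{R}}$ from item (2), this gives $\widehat{\overline{R}}\cong\widehat{R}/I_{\widehat{R}}=\overline{\widehat{R}}$, as required.

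I expect the main obstacle to be the super-specific computation in item (2): one must correctly identify the odd component $\widehat{R}_{1}$ and check that the super-ideal it generates equals $\widehat{R}R_{1}$. Once the behaviour of completion on the parity components is controlled through Remark \ref{I_0-adic and I-adic} and Lemma \ref{Noetherian}, the rest is a faithful transcription of the classical theory of completions of Noetherian local rings.
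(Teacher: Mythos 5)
Your proof is correct and follows essentially the same route as the paper: Proposition \ref{completionofNoetherian} and Remark \ref{I_0-adic and I-adic} drive everything, item (2) comes down to identifying $\widehat{R}_1$ with $\widehat{R}_0R_1$ (the paper phrases this as completing the map $R\otimes_{R_0}R_1\to R$), and item (3) is the completed exact sequence $0\to I_R\to R\to\overline{R}\to 0$. The only cosmetic difference is in (1), where the paper deduces locality from the classical fact that $\widehat{R}_0$ is the local $\mathfrak{m}_0$-adic completion of $R_0$, while you rederive it directly via the geometric-series argument; both are fine.
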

\begin{proof}
The commutative ring $\widehat{R}_0$ is a $\mathfrak{m}_0$-adic completion of the local ring $R_0$, hence local. Moreover, the maximal ideal of $\widehat{R}_0$ coincides with the $\mathfrak{m}_0$-adic completion of $\mathfrak{m}_0$, that in turn coincides with $\widehat{\mathfrak{m}}_0$. Proposition \ref{completionofNoetherian} infers (1) and (3) as well.

Consider the image of the $\mathfrak{m}_0$-adic completion of the canonical map $R\otimes_{R_0} R_1\to R$, using Remark \ref{I_0-adic and I-adic}. It is $\widehat{I_R}$ on one hand, and is $I_{\widehat{R}}$ on the other hand. This proves (2).
\end{proof}

\section{Superschemes}

For the details of the content of this section we refer to \cite{ccf, man, maszub1, var, zub}.

\subsection{Geometric superspaces}

Recall that a \emph{geometric superspace} (or \emph{local ringed superspace}) $X$ consists of a topological space $X^e$ and a sheaf of super-commutative super-rings $\mathcal{O}_X$ such that all stalks $\mathcal{O}_{X, x}, x\in X^e$, are local super-rings. A morphism of
superspaces $f : X\to  Y$ is a pair $( f^e, f^*
)$, where $f^e : X^e\to Y^e$ is a morphism of topological spaces
and $f^* : \mathcal{O}_Y\to f^e_{
*}\mathcal{O}_X$ is a morphism of sheaves such that $f^*_
x : \mathcal{O}_{Y , f (x)}\to\mathcal{O}_{X,x}$ is a local morphism for
any $x\in X^e$. Let $\mathcal{V}$ denote the category of geometric superspaces.

Let $X$ be a geometric superspace. If $U$ is an open subset of $X^e$, then $(U, \mathcal{O}_X|_U)$ is again a geometric superspace, which is called an \emph{open super-subspace} of $X$. In what follows 
$(U, \mathcal{O}_X|_U)$ is denoted just by $U$.

Let $X$ be a geometric superspace. The sheafification of the pre-sheaf $U\to I_{\mathcal{O}_X(U)}=\mathcal{O}_X(U) (\mathcal{O}_X )(U)_1$ is a sheaf of $\mathcal{O}_X$-super-ideals, that is denoted by $\mathcal{I}_X$. The purely even geometric superspace $(X^e, \mathcal{O}_X/\mathcal{I}_X)$ is denoted by $X_{ev}$, and by $X_{res}$, when it is regarded as an geometric space.  

Finally, with each geometric superspace $X$ one can associate a purely even geometric superspace $X_0=(X^e, (\mathcal{O}_X)_0)$, that can be also regarded as a geometric space.

\subsection{Superschemes}

Let $R$ be a super-ring. An \emph{affine superscheme} $SSpec \ R$ can be defined as follows.
The underlying topological space of $SSpec \ R$  coincides
with the prime spectrum of $R$, endowed with the Zariski topology. For any open subset $U\subseteq (SSpec \ R)^e$ the super-ring $\mathcal{O}_{SSpec \ R}(U)$ consists of all locally constant functions $h : U\to\sqcup_{\mathfrak{p}\in U} R_{\mathfrak{p}}$ such that $h(\mathfrak{p})\in R_{\mathfrak{p}} , \mathfrak{p}\in U$. 

For any $f\in R_0$ let $D(f)$ denote the open subset $\{\mathfrak{p}\in (SSpec \ R)^e\mid f\not\in\mathfrak{p}\}$. As in the purely even case, $D(f)$ is isomorphic to $SSpec \ R_f$.

Affine superschemes form a full subcategory of $\mathcal{V}$, which is anti-equivalent to the category of super-rings.

A superspace $X$ is called a \emph{(geometric) superscheme} if there is an open covering $X^e =
\cup_{i\in I} U_i$, such that each open super-subspace $U_i$ is isomorphic to an affine superscheme $SSpec \ R_i$. Superschemes form a full subcategory of $\mathcal{V}$, denoted by $\mathcal{SV}$. 

If $X$ is a superscheme, then any its open super-subspace is a superscheme, called an \emph{open super-subscheme}. A superscheme $Z$ is a \emph{closed super-subscheme} of $X$, if there is a
closed embedding $\iota : Z^e\to X^e$ such that the sheaf $\iota_*\mathcal{O}_Z$ is an epimorphic image of the sheaf $\mathcal{O}_X$. For example, $X_{ev}$ is a closed super-subscheme of $X$.

Finally, a superscheme $Z$ is said to be a \emph{super-subscheme}
of $X$, if $Z$ is isomorphic to a closed super-subscheme of an open super-subscheme of $X$.

It can be easily seen that $Z_{res}$ is an (open, closed) subscheme of $X_{res}$, provided $Z$ is an
(open, closed) super-subscheme of $X$.  

A superscheme $X$ is called \emph{irreducible}, if the topological space $X^e$ is irreducible.
Thus it obviously follows that $X$ is irreducible if and only if $X_{ev}$ is if and only if $X_{res}$ is.

A superscheme $X$ is said to be \emph{Noetherian} if $X$ can be covered by finitely many open affine super-subschemes $SSpec \ R_i$ with $R_i$ to be Noetherian. Note that if a superscheme $X$ is Noetherian, then $X_{res}$ is a Noetherian scheme.

The proof of the following lemma can be copied from the proof of Proposition II.3.2, \cite{hart}.
\begin{lm}\label{Noetheriansuperschemes}
Let $X$ be a superscheme. Then the following are equivalent :
\begin{itemize}
\item[(a)] $X$ is Noetherian; 
\item[(b)] 
\begin{itemize}
\item[(1)] $X^e$ is a quasi-compact topological space;
\item[(2)] for any open affine super-subscheme $U\simeq SSpec \ A$ of $X$, the super-ring $A$ is Noetherian.
\end{itemize}
\end{itemize}
\end{lm}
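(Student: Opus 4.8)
The plan is to transcribe the classical argument for Proposition II.3.2 of \cite{hart}, replacing each commutative‑ring step by its super‑analogue via Lemma \ref{Noetherian}. First I would dispose of the two easy implications. For (a) $\Rightarrow$ (b)(1), a finite cover $X^e=\cup_i (SSpec\ R_i)^e$ exhibits $X^e$ as a finite union of the spaces $(SSpec\ R_i)^e$, each homeomorphic to $\mathrm{Spec}(\overline{R_i})$; since the prime spectrum of any ring is quasi-compact, so is the finite union $X^e$. For (b) $\Rightarrow$ (a), the definition of a superscheme provides an affine cover, quasi-compactness of $X^e$ extracts a finite subcover, and (b)(2) guarantees each coordinate super-ring is Noetherian, which is exactly condition (a).

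The substance is (a) $\Rightarrow$ (b)(2): given the finite Noetherian cover $\{SSpec\ R_i\}$, one must show that an \emph{arbitrary} open affine $U\simeq SSpec\ A$ has $A$ Noetherian. I would reduce this to a statement about distinguished opens. The sets $D(f)$, $f\in A_0$, form a basis of $U^e$, and by the standard fact that two affine opens admit a common basis of mutual distinguished opens, each point of $U\cap U_i$ lies in some $D(f)$, $f\in A_0$, that is simultaneously a distinguished open $D(g)$, $g\in (R_i)_0$, of $U_i$, so that $A_f\simeq (R_i)_g$ under the inclusion $U\cap U_i\hookrightarrow U_i$. Now $(R_i)_g$ is a localization of a Noetherian super-ring, hence Noetherian: indeed $((R_i)_g)_0=((R_i)_0)_g$ is a localization of a Noetherian ring and $((R_i)_g)_1=((R_i)_1)_g$ is finitely generated over it, so Lemma \ref{Noetherian} applies. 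Since $U^e\cong\mathrm{Spec}(\overline{A})$ is quasi-compact, finitely many such $D(f_1),\dots,D(f_n)$ cover $U$, with each $A_{f_j}$ Noetherian and the images of $f_1,\dots,f_n$ generating the unit ideal of $A_0$.

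It then remains to prove the super-analogue of the affine patching lemma: if $f_1,\dots,f_n\in A_0$ generate the unit ideal and each $A_{f_j}$ is Noetherian, then $A$ is Noetherian. By Lemma \ref{Noetherian} this splits into two claims. First, each $(A_0)_{f_j}=(A_{f_j})_0$ is Noetherian for the $f_j$ generating the unit ideal of $A_0$, which forces $A_0$ Noetherian by the classical lemma for commutative rings. Second, each $(A_1)_{f_j}=(A_{f_j})_1$ is finitely generated over $(A_0)_{f_j}$; clearing denominators yields finitely many elements of $A_1$ whose images generate $A_1$ after localization at every $f_j$, and because the $f_j$ generate the unit ideal these global elements generate $A_1$ over $A_0$. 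Hence $A$ is Noetherian, establishing (b)(2).

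The main obstacle is the last paragraph, specifically the second claim: checking that finite generation of the \emph{odd} part $A_1$ as an $A_0$-module descends from the distinguished cover. This is routine module-theoretic patching, but it is the only place where the super-structure genuinely enters through the two-part criterion of Lemma \ref{Noetherian}, and it must be handled carefully to ensure the chosen generators are honest global elements of $A_1$ rather than merely compatible local sections.
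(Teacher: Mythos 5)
Your argument is correct and is precisely the route the paper intends: the authors simply say the proof "can be copied from the proof of Proposition II.3.2" of Hartshorne, and your write-up is that argument with the only genuinely super-theoretic addition being the descent of finite generation of $A_1$ over $A_0$ from a distinguished cover, handled correctly via Lemma \ref{Noetherian}. No gaps.
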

Let $K$ be a field. A superscheme $X$ is said to be of \emph{finite type} over $K$, if there is a finite open affine covering of $X$ as above, such that each $R_i$ is a finitely generated $K$-superalgebra. 

Similarly to the above lemma one can show that a superscheme $X$ is of finite type over $K$ if and only if $X^e$ is quasi-compact and for any open affine super-subscheme $U\simeq SSpec \ A$ of $X$, $A$ is a finitely generated $K$-superalgebra.

\subsection{Integral superschemes}

A superscheme $X$ is called {\it reduced}, if $X_{res}$ is a reduced scheme. Since $\mathcal{O}_{X_{res}, x}\simeq\overline{\mathcal{O}_x}$, this property is local, i.e. $X$ is reduced if and only if the superring $\mathcal{O}_x$ is reduced for any $x\in X^e$ (cf. \cite{hart}, Exercise II.2.3(a)). 

In particular, an affine superscheme $SSpec \ A$ is reduced if and only if the super-ring $A$ is reduced. Moreover, a superscheme $X$ is reduced if and only if any its open affine super-subscheme is. 

Obviously, if for any open subset $U\subseteq X^e$ the superring $\mathcal{O}_X(U)$ is reduced, then $X$ is reduced. Nevertheless, we do not know whether the converse is true (compare with the definition in \cite{hart}, II, \S 3).

A superscheme $X$ is called {\it integral}, if $X_{res}$ is integral. If additionally each local superring $\mathcal{O}_x$ is a strong superdomain, then $X$ is called {\it strong integral}.
\begin{pr}\label{integral}
A superscheme $X$ is strong integral if and only if the following conditions hold : 
\begin{enumerate}
\item $X$ is irreducible and reduced; 
\item for any its open affine super-subscheme $U$ the superring $\mathcal{O}(U)$ is a strong superdomain.
\end{enumerate}
\end{pr}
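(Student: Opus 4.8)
The plan is to prove the two implications separately, reducing each to the affine situation: on an open affine super-subscheme $U = SSpec\ A$ one has $\mathcal{O}(U) = A$ and, at a point corresponding to a prime super-ideal $\mathfrak{p}$, the stalk $\mathcal{O}_x = A_{\mathfrak{p}}$. Throughout I use the characterization recorded in the text, that a superdomain $A$ is strong exactly when $sa = 0$ with $s \in A \setminus I_A$ forces $a = 0$. The classical backbone is Proposition II.3.1 of \cite{hart} (for schemes, integral is equivalent to reduced and irreducible); the genuinely super content is to transport the extra ``strong'' clause through localization.

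For the direction $(1)\wedge(2)\Rightarrow$ strong integral, integrality of $X_{res}$ is immediate: since $X$ is irreducible iff $X_{res}$ is and reduced iff $X_{res}$ is reduced, hypothesis $(1)$ makes $X_{res}$ irreducible and reduced, hence integral. It then remains to see that each stalk $\mathcal{O}_x = A_{\mathfrak{p}}$ is a strong superdomain, with $A = \mathcal{O}(U)$ strong by $(2)$. So everything rests on the claim that \emph{a localization $A_{\mathfrak{p}}$ of a strong superdomain $A$ at a prime super-ideal is again a strong superdomain}. To prove this I would first note $\overline{A_{\mathfrak{p}}} \cong (\overline{A})_{\overline{\mathfrak{p}}}$ is a localization of the domain $\overline{A}$, hence a domain, so $A_{\mathfrak{p}}$ is a superdomain; then check the non-zero-divisor condition by representing elements as fractions, using that $\overline{A}$ is a domain to see that the numerator of an element outside $I_{A_{\mathfrak{p}}}$ lies outside $I_A$, clearing denominators in $sa = 0$, and invoking strongness of $A$.

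For the converse, strong integral gives by definition that $X_{res}$ is integral, hence irreducible and reduced, so $X$ is irreducible and reduced, which is $(1)$. For $(2)$, fix an open affine $U = SSpec\ A$: reducedness of $X$ yields $A$ reduced (so $\overline{A}$ reduced, via Lemma \ref{reduced}), and irreducibility yields $U^e$ irreducible, whence $\overline{A}$ has a unique minimal prime; being reduced, $\overline{A}$ is a domain and $A$ is a superdomain. The remaining point is the claim that \emph{a superdomain all of whose localizations at maximal super-ideals are strong is itself strong}. Given $sa = 0$ with $s \in A \setminus I_A$, I would localize at an arbitrary maximal super-ideal $\mathfrak{m}$, observe $s \notin I_{A_{\mathfrak{m}}}$ (since $\overline{A}$ is a domain and the multiplicative set avoids $I_A$), apply strongness of $A_{\mathfrak{m}} = \mathcal{O}_x$ to get $a = 0$ in $A_{\mathfrak{m}}$, and conclude $a = 0$ from the local-global principle for vanishing of a supermodule element.

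The main obstacle is these two localization claims, each amounting to the statement that an element $s \notin I_A$ remains a non-zero-divisor after, and before, localization. Both collapse to the single clean fact that $\overline{A}$ is a domain, so that the even multiplicative sets involved never meet $I_A$ and never carry an element outside $I_A$ into $I$. Secondarily I must justify the local-global principle: a homogeneous element of an $A$-supermodule vanishing in every $A_{\mathfrak{m}}$ is zero. This follows as in the commutative case once one notes that maximal super-ideals $\mathfrak{m}$ of $A$ correspond to maximal ideals $\mathfrak{m}_0$ of $A_0$ (because $A_1^2 = (I_A)_0 \subseteq \mathrm{nil}(A_0)$ lies in every maximal ideal of $A_0$), reducing the annihilator argument to the ring $A_0$.
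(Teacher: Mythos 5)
Your proposal is correct and follows essentially the same route as the paper: both directions reduce to the affine case, with the "strong" condition passed to stalks by noting that localizations of strong superdomains are strong, and recovered from stalks via the local--global principle for injectivity of multiplication by $s$ (the paper phrases this as checking injectivity after localization at every prime, while you use maximal super-ideals, which is equivalent). Your extra justifications --- that $\overline{A}$ being a domain keeps elements outside $I_A$ from falling into $I$ under localization, and that maximal super-ideals correspond to maximal ideals of $A_0$ because $A_1^2\subseteq\mathrm{nil}(A_0)$ --- correctly fill in details the paper leaves as "obvious".
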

\begin{proof}
By Proposition II.3.1, \cite{hart},  $X_{res}$ is integral if and only if Condition (1) holds. 

To prove "only if", assume that $X$ is strong integral. Then Condition (1) holds. Therefore, for any open super-subscheme $U\simeq SSpec \ A$ of $X$, $\overline{A}$ is a domain (cf. \cite{hart}, Example II.3.0.1). 
Furthermore, $A_{\mathfrak{p}}$ is a strong superdomain for any point $\mathfrak{p}\in (SSpec \ A)^e$. This implies that the superdomain $A$ is strong, ensuring Condition (2). Indeed, given $s\in A_0 \setminus A^2_1$, the multiplication $a\mapsto sa, A\to A$ by $s$ is injective
since it is so after localization at every $\mathfrak{p}\in (SSpec \ A)^e$.

Conversely, any local superring $\mathcal{O}_x$ can be identified with a local superring $A_{\mathfrak{p}}$ of an open super-subscheme $U\simeq SSpec \ A$, where $x=\mathfrak{p}\in U^e$.
If $A$ is a strong superdomain, then $A_{\mathfrak{p}}$ is obviously a strong superdomain. This proves the "if" part.
\end{proof}

\subsection{Function superfield}

The following lemma superizes Exercise II.3.6, \cite{hart}.
\begin{lm}\label{genericpoint}
Let $X$ be an integral superscheme and $\xi\in X^e$ be the generic point. Then $\mathcal{O}_{\xi}$ is a superfield that is isomprphic to $SQ(\mathcal{O}_X(U))$ for any open affine super-subscheme $U$ of $X$.
\end{lm}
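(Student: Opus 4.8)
The plan is to reduce everything to a single affine chart and to identify the generic point with the prime super-ideal $I_A$ there. First I would recall that since $X$ is integral, the space $X^e$ is irreducible, so it carries a unique generic point $\xi$, namely the point whose closure is all of $X^e$; equivalently, $\xi$ is the generic point of the integral scheme $X_{res}$, and as such it lies in every nonempty open subset of $X^e$. In particular, for any open affine super-subscheme $U \simeq SSpec\, A$ we have $\xi \in U^e$, and since stalks depend only on an arbitrarily small neighbourhood, $\mathcal{O}_{\xi} \simeq \mathcal{O}_{U,\xi} \simeq A_{\mathfrak{p}}$, where $\mathfrak{p}$ is the prime super-ideal of $A$ corresponding to $\xi$. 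Here $\mathcal{O}_X(U) = A$ because $U$ is affine.

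Next I would pin down $\mathfrak{p}$. By the correspondence recalled in Section 1, the prime super-ideals of $A$ are exactly the super-ideals containing $I_A$ whose image in $\overline{A} = A/I_A$ is prime, so $(SSpec\, A)^e$ is homeomorphic to $\mathrm{Spec}\,\overline{A}$. Because $X_{res}$ is integral, $\overline{A}$ is an integral domain; hence $I_A$ is a prime super-ideal and $A$ is a superdomain. The generic point of the irreducible space $\mathrm{Spec}\,\overline{A}$ is the zero ideal of the domain $\overline{A}$, which pulls back to the prime super-ideal $I_A$ of $A$. Therefore $\mathfrak{p} = I_A$ and $\mathcal{O}_{\xi} \simeq A_{I_A} = SQ(A) = SQ(\mathcal{O}_X(U))$, where the last two equalities are the definition of the superfield of fractions of the superdomain $A$. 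Since $A$ is a superdomain, $A_{I_A}$ is a superfield, as recorded in Section 1.

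Finally, the clause ``for any open affine super-subscheme $U$'' follows at once: each such $U$ contains $\xi$ and yields a canonical isomorphism $SQ(\mathcal{O}_X(U)) \simeq \mathcal{O}_{\xi}$, so any two of these fraction superfields are isomorphic through the common stalk. I expect the only genuinely delicate point to be the verification that the generic point of $X^e$ corresponds, in each affine chart, precisely to $I_A$ rather than to some other minimal prime; this rests on the fact that integrality of $X_{res}$ forces $\overline{A}$ to be a domain with unique minimal prime $(0)$, so that the prime-super-ideal/prime-ideal dictionary leaves no ambiguity. The remaining steps --- that the stalk of the structure sheaf of $SSpec\, A$ at $\mathfrak{p}$ is $A_{\mathfrak{p}}$, and that the localization of a superdomain at $I_A$ is a superfield --- are the super-analogues of the classical facts and follow directly from the definitions given above.
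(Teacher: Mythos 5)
Your proof is correct and takes essentially the same route as the paper's: both identify the generic point, inside any affine chart $U\simeq SSpec\ A$, with the smallest prime super-ideal $I_A$ (using that integrality of $X_{res}$ forces $\overline{A}$ to be a domain), and then conclude $\mathcal{O}_{\xi}\simeq A_{I_A}=SQ(A)$, a superfield. You merely spell out the steps the paper declares ``obvious.''
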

\begin{proof}
Recall that a point $\xi\in X^e$ is generic if and only if $\overline{\{\xi\}}=X$ if and only if $\xi$ belongs to any (not empty) open subset $V\subseteq X^e$. Let $U$ be an open affine super-subscheme of $X$. Then $A=\mathcal{O}_X(U)$ is a superdomain and the generic point $\xi$ coincides with the smallest prime ideal $I_A$. Thus our lemma obviously follows. 
\end{proof}
Following \cite{hart} we call $\mathcal{O}_{\xi}$ a \emph{function superfield} of $X$ and denote it by $SK(X)$.

\subsection{The functorial approach}

There is an alternative way to define superspaces and superschemes as functors from the category of super-rings/superalgebras to the category of sets. Since we use this approach in Lemma \ref{anopenimmersion} only, we will not introduce this stuff in a complete form. The interested reader can find all necessary notions/definitions in \cite{maszub1}. All we need to note is that the category $\mathcal{SV}$ is equivalent to a full subcategory, $\mathcal{SF}$, of the category of the functors mentioned above (see \cite[Theorem 5.14]{maszub1}).

\subsection{Sheaves of $\mathcal{O}_X$-supermodules}

Let $R$ be a super-ring, and let $M$ be an $R$-supermodule. Analogously to the purely even case, one can define an associated sheaf $\widetilde{M}$ of $\mathcal{O}_X$-supermodules, where $X=SSpec \ R$.
More precisely, for any open subset $U\subseteq (SSpec \ R)^e$ the $\mathcal{O}_X(U)$-supermodule $\widetilde{M}(U)$ consists of all locally constant functions $h : U\to \sqcup_{\mathfrak{p}\in U}M_{\mathfrak{p}}$ such that $h(\mathfrak{p})\in M_{\mathfrak{p}}, \mathfrak{p}\in U$. 

A sheaf $\mathcal{F}$ of $\mathcal{O}_X$-supermodules is called \emph{quasi-coherent}, if $X$ can be covered by open affine super-subschemes $U_i\simeq SSpec \ A_i$, such that for each $i$ there is an $A_i$-supermodule $M_i$ with $\mathcal{F}|_{U_i}\simeq \widetilde{M_i}$ (cf. \cite{hart, zub2}). If, additionally, each supermodule $M_i$ is finitely generated, then the sheaf $\mathcal{F}$ is called \emph{coherent}.
\begin{pr}\label{equivalenceofcategories}
If $X=SSpec \ A$, then the functor $M\mapsto\widetilde{M}$ is an equivalence of the category of $A$-supermodules and the category of quasi-coherent sheaves of $X$-supermodules (both with graded morphisms). Moreover, if $A$ is a Noetherian superalgebra, then this functor is an equivalence of the category of finitely generated $A$-supermodules and the category of coherent sheaves of $X$-supermodules (both with graded morphisms).
\end{pr}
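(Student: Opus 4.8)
The plan is to superize, essentially verbatim, the classical development of \cite[II.5]{hart}, while keeping track of the $\mathbb{Z}_2$-grading throughout. Two structural observations make this transfer painless. First, by the characterization of prime super-ideals recalled in Section~1, the underlying space $X^e=(SSpec\ A)^e$ is homeomorphic to $\mathrm{Spec}\,\overline{A}=\mathrm{Spec}\,(A_0/A_1^2)$, and since $A_1^2$ is a nil ideal of $A_0$, to $\mathrm{Spec}\,A_0$; in particular $X^e$ is quasi-compact and the sets $D(f)$, $f\in A_0$, form a base of its topology. Second, every localization entering the construction of $\widetilde{M}$ inverts an even element $f\in A_0$ (recall $M_{\mathfrak p}=(A_0\setminus\mathfrak p_0)^{-1}M$ and $D(f)\simeq SSpec\,A_f$), hence is a localization of $\mathbb{Z}_2$-graded objects that respects the grading. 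Consequently all stalks, sheaf operations and the relevant $\operatorname{Hom}$-groups are themselves $\mathbb{Z}_2$-graded, and the morphisms in sight are exactly the degree-preserving (graded) ones.

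With this in hand I would first record the basic local computation: for $f\in A_0$ one has $\widetilde{M}(D(f))\simeq M_f$ as $A_f$-supermodules, and in particular $\Gamma(X,\widetilde{M})\simeq M$. Since $D(f)\simeq SSpec\,A_f$ and, as in the even case, $\widetilde{M}|_{D(f)}\simeq\widetilde{M_f}$, this reduces to the case $f=1$, i.e. to identifying global sections with $M$, proved exactly as in \cite[Proposition II.5.1]{hart}, the only change being that one reads the argument as a statement about the $A_0$-module $M$ carrying its grading. Full faithfulness then follows formally: a graded morphism $\varphi:M\to N$ induces $\widetilde{\varphi}:\widetilde{M}\to\widetilde{N}$ by functoriality of localization, a sheaf morphism $\psi:\widetilde{M}\to\widetilde{N}$ induces $\Gamma(\psi):M\to N$ on global sections, and the computation $\Gamma(X,\widetilde{M})\simeq M$ shows these assignments are mutually inverse and grading-preserving, yielding a natural isomorphism $\operatorname{Hom}_A(M,N)\simeq\operatorname{Hom}_{\mathcal{O}_X}(\widetilde{M},\widetilde{N})$ of graded $\operatorname{Hom}$-groups.

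For essential surjectivity onto quasi-coherent sheaves I would, given such an $\mathcal{F}$, set $M=\Gamma(X,\mathcal{F})$ and construct the canonical graded morphism $\beta:\widetilde{M}\to\mathcal{F}$; it suffices to check that $\beta$ is an isomorphism on the base $\{D(f)\}$. The heart of the matter, as classically, is the localization lemma $\mathcal{F}(D(f))\simeq\Gamma(X,\mathcal{F})_f$, which rests on two facts: a global section whose restriction to $D(f)$ vanishes is annihilated by some power of $f$, and a section over $D(f)$, after multiplication by a suitable power of $f$, extends globally. I expect this to be the main obstacle, but it is surmountable for the same reasons as in the even case: one covers $X^e$, and then the overlaps, by finitely many basic opens on which $\mathcal{F}$ comes from a supermodule (using quasi-compactness of $X^e$ and of the $D(f)\cap D(g_i)=D(fg_i)$), and runs the argument of \cite[Lemma II.5.3]{hart}. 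Because all elements inverted are even and the modules are graded, the annihilation and extension statements hold degreewise and assemble into graded statements, giving $\mathcal{F}\simeq\widetilde{M}$.

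Finally, for the Noetherian refinement I would invoke Lemma \ref{Noetherian}: when $A$ is Noetherian, $A_0$ is a Noetherian ring and an $A$-supermodule is finitely generated if and only if it is finitely generated as an $A_0$-module. If $M$ is finitely generated, then $\widetilde{M}$ is coherent by definition, via the trivial cover $X=SSpec\,A$. Conversely, if $\mathcal{F}$ is coherent, then writing $\mathcal{F}\simeq\widetilde{M}$ with $M=\Gamma(X,\mathcal{F})$ by the quasi-coherent case, coherence together with quasi-compactness yields a finite cover by basic opens $D(f_i)$ with $M_{f_i}$ finitely generated over $A_{f_i}$; applying the classical affine-Noetherian argument to the even and odd parts $M_0,M_1$ (finitely generated over the corresponding localizations of $A_0$) shows that $M_0$ and $M_1$, hence $M$, are finitely generated over $A_0$, so $M$ is a finitely generated $A$-supermodule. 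This completes the equivalence in the coherent case.
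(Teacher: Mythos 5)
Your proof is correct, but it follows a different route from the paper's. You superize Hartshorne's II.5 machinery directly: you establish $\Gamma(X,\widetilde{M})\simeq M$ and $\widetilde{M}(D(f))\simeq M_f$, deduce full faithfulness from global sections, and prove essential surjectivity by rerunning the localization lemma $\mathcal{F}(D(f))\simeq\Gamma(X,\mathcal{F})_f$ degreewise, the whole argument being justified by your (correct) observations that $X^e\cong\mathrm{Spec}\,A_0$ and that every localization in sight inverts only even elements. The paper instead outsources both halves: full faithfulness is quoted from Proposition~2.1 of \cite{zub}, and for essential surjectivity it restricts a quasi-coherent $\mathcal{F}$ to a sheaf of $\mathcal{O}_{X_0}$-modules on the same underlying space, applies the classical Corollary~II.5.5 of \cite{hart} to obtain an $A_0$-module $M$ with $\mathcal{F}|_{\mathcal{O}_{X_0}}\simeq\widetilde{M}$, and then recovers the $A$-supermodule structure on $M$ from the counit $f^*(\mathcal{F}|_{\mathcal{O}_{X_0}})\simeq\widetilde{A\otimes_{A_0}M}\to\mathcal{F}$ along $f:X\to X_0=SSpec\ A_0$. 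The paper's reduction-to-the-even-case is shorter and avoids redoing the extension/annihilation arguments, at the cost of leaning on external references and on the slightly delicate point that the counit map really does reconstruct the full $\mathcal{O}_X$-action; your version is self-contained modulo \cite{hart} and makes transparent exactly why the classical proofs survive the grading. Your treatment of the coherent case, via Lemma \ref{Noetherian} and the equivalence of finite generation over $A$ and over $A_0$, matches the paper's in substance and is sound.
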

\begin{proof}
By Proposition 2.1, \cite{zub}, this functor is full and faithful. Let $\mathcal{F}$ be a quasi-coherent sheaf of $\mathcal{O}_X$-supermodules. By Proposition 3.1, \cite{zub}, and by Corollary II.5.5, \cite{hart} as well, $\mathcal{F}|_{\mathcal{O}_{X_0}}\simeq \widetilde{M}$, where $M$ is an $A_0$-supermodule. Moreover, if $A$ is Noetherian and $\mathcal{F}$ is coherent, then $A_0$ is also Noetherian and $M$ is a finitele generated $A_0$-(super)module respectively.

Let $f$ denote the natural superscheme morphism $X\to X_0=SSpec \ A_0$, induced by the canonical embedding $A_0\to A$. Then $f^*(\mathcal{F}|_{\mathcal{O}_{X_0}})$ is a sheaf of $\mathcal{O}_X$-supermodules associated with the presheaf
\[U\mapsto \mathcal{O}_X(U)\otimes_{\mathcal{O}_{X_0}(U)}\mathcal{F}(U), U\subseteq X_0^e=X^e.\]
Moreover, the natural morphism $f^*(\mathcal{F}|_{\mathcal{O}_{X_0}})\to \mathcal{F}$ of sheaves of $\mathcal{O}_X$-supermodules, induced by the morphism of presheaves (of $\mathcal{O}_X$-supermodules)
\[\mathcal{O}_X(U)\otimes_{\mathcal{O}_{X_0}(U)}\mathcal{F}(U)\to \mathcal{F}(U),\]
recovers the structure of $\mathcal{F}$ as a sheaf of $\mathcal{O}_X$-supermodules. By Proposition 2.1 (5), \cite{zub}, $f^*(\mathcal{F}|_{\mathcal{O}_{X_0}})\simeq \widetilde{A\otimes_{A_0} M}$, hence the morphism $f^*(\mathcal{F}|_{\mathcal{O}_{X_0}})\to \mathcal{F}$ defines the structure of $A$-supermodule on $M$, such that $\mathcal{F}\simeq \widetilde{M}$. Proposition is proven.
\end{proof}
The following proposition is a superization of Proposition II.5.9, \cite{hart}. 
\begin{pr}\label{closed super-subschemes and (quasi)coherent sheaves of super-ideals}
Let $X$ be a superscheme. For any closed super-subscheme $Y$ the super-ideal sheaf $\mathcal{J}_Y=\ker (\mathcal{O}_X\to i_* \mathcal{O}_Y)$ is quasi-coherent, where $i$ is the corresponding closed embedding $Y^e\to X^e$. If $X$ is Noetherian, then $\mathcal{J}_Y$ is coherent. Conversely, any quasi-coherent super-ideal sheaf on $X$ has the form $\mathcal{J}_Y$ for an uniquely defined closed super-subscheme $Y$ of $X$.
\end{pr}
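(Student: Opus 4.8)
The plan is to transcribe the classical proof of Proposition II.5.9 of \cite{hart}, using the super-analogue of the module--sheaf equivalence, namely Proposition \ref{equivalenceofcategories}, as the main tool. I would treat the three assertions separately: that $\mathcal{J}_Y$ is quasi-coherent, that it is coherent when $X$ is Noetherian, and the converse together with uniqueness.

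Since quasi-coherence is local on $X$, I would cover $X$ by open affine super-subschemes $U\simeq SSpec\ A$; over each of them $\mathcal{J}_Y$ restricts to the super-ideal sheaf of the closed super-subscheme $Z=Y\cap U$ of $U$. The key local claim is that such a $Z$ is affine, of the form $SSpec(A/\mathfrak{a})$ for a super-ideal $\mathfrak{a}\subseteq A$, with the embedding induced by $A\to A/\mathfrak{a}$. Granting it, the surjection $\mathcal{O}_U\to i_*\mathcal{O}_Z$ is identified with $\widetilde{A}\to\widetilde{A/\mathfrak{a}}$, so $\mathcal{J}_Y|_U\simeq\widetilde{\mathfrak{a}}$ is quasi-coherent by Proposition \ref{equivalenceofcategories}. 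If moreover $X$ is Noetherian, then by Lemma \ref{Noetheriansuperschemes} each $A$ is Noetherian, so $\mathfrak{a}$ is finitely generated and $\widetilde{\mathfrak{a}}$ is coherent, again by Proposition \ref{equivalenceofcategories}.

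For the converse, let $\mathcal{J}\subseteq\mathcal{O}_X$ be a quasi-coherent super-ideal sheaf. On each affine $U\simeq SSpec\ A$, Proposition \ref{equivalenceofcategories} gives $\mathcal{J}|_U\simeq\widetilde{\mathfrak{a}}$ with $\mathfrak{a}=\Gamma(U,\mathcal{J})$ a super-ideal of $A$, whence $(\mathcal{O}_X/\mathcal{J})|_U\simeq\widetilde{A/\mathfrak{a}}$ defines a closed super-subscheme of $U$ supported on the closed set of primes containing $\mathfrak{a}$. Because the passage from $\mathcal{J}$ to $\mathfrak{a}$ commutes with localization, these local data agree on overlaps and glue to a superscheme $Y$ with $Y^e=\mathrm{Supp}(\mathcal{O}_X/\mathcal{J})$ and $\mathcal{O}_Y=(\mathcal{O}_X/\mathcal{J})|_{Y^e}$, together with a closed embedding for which $\mathcal{J}_Y=\mathcal{J}$. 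Uniqueness is immediate: any closed super-subscheme $Y'$ with $\mathcal{J}_{Y'}=\mathcal{J}$ has underlying space $\mathrm{Supp}(\mathcal{O}_X/\mathcal{J})$ and structure sheaf $\mathcal{O}_X/\mathcal{J}$, hence coincides with $Y$.

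The main obstacle is the local claim in the forward direction: that the pushforward $i_*\mathcal{O}_Z$ of the structure sheaf of a closed super-subscheme into an affine superscheme is quasi-coherent, equivalently that $Z$ is affine and cut out by a super-ideal. The danger is circularity, since affineness of $Z$ is exactly what is wanted; I would avoid it by arguing directly, covering $Z^e$ by affine super-subschemes and verifying the sheaf condition on the distinguished opens $D(f)$, $f\in A_0$, throughout keeping track of the $\mathbb{Z}_2$-grading and of the odd component --- this is the one place where the super-structure, rather than a formal reduction to \cite{hart}, has to be handled by hand. Once this quasi-coherence is established, Proposition \ref{equivalenceofcategories} turns the surjection $\widetilde{A}\to i_*\mathcal{O}_Z$ into a surjection $A\to B$ of super-rings, and $\mathfrak{a}=\ker(A\to B)$ completes the argument; all remaining steps are routine transcriptions of the even case.
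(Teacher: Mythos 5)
Your converse direction, uniqueness argument, and the Noetherian/coherence step are fine and agree with what the paper does (both just transcribe Proposition II.5.9 of \cite{hart} through Proposition \ref{equivalenceofcategories}). The problem is the forward direction. You correctly identify that everything hinges on the local claim that a closed super-subscheme $Z$ of an affine superscheme $SSpec\ A$ is of the form $SSpec(A/\mathfrak{a})$ --- but that claim is precisely the Corollary the paper \emph{deduces from} this Proposition, and your plan to establish it independently (``covering $Z^e$ by affine super-subschemes and verifying the sheaf condition on the distinguished opens'') is a statement of intent rather than an argument. To carry it out you would need the full super-analogue of the affineness criterion of Hartshorne, Exercise II.2.17 (finitely many global sections generating the unit ideal with affine non-vanishing loci), including the quasi-compactness bookkeeping; none of that is sketched, and it is the hardest part of the whole statement. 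As written, the forward direction is circular or at best incomplete.

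The paper avoids this entirely by never proving affineness first. It restricts scalars to the purely even structure sheaf: $Y_0$ is a closed subscheme of the scheme $X_0=(X^e,(\mathcal{O}_X)_0)$ under the same embedding $i$, and $i_*\mathcal{O}_Y|_{\mathcal{O}_{X_0}}=i_*(\mathcal{O}_Y|_{\mathcal{O}_{Y_0}})$. The classical Proposition II.5.8(c) of \cite{hart} (closed immersions are quasi-compact and separated) makes this pushforward quasi-coherent over $\mathcal{O}_{X_0}$; Proposition 3.1 of \cite{zub} upgrades quasi-coherence over $\mathcal{O}_{X_0}$ to quasi-coherence over $\mathcal{O}_X$; and Corollary 3.2 of \cite{zub} gives quasi-coherence of the kernel $\mathcal{J}_Y$. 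Affineness of closed super-subschemes of affine superschemes then falls out afterwards, as a corollary. If you want a non-circular version of your proof, either follow that reduction to the even case, or prove a genuine super-version of II.5.8(c) for closed immersions before touching the local structure of $Z$.
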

\begin{proof}
It is easy to see that $Y_0$ is a closed subscheme of $X_0$ with respect to the same closed embedding $i$. Moreover, $i_* \mathcal{O}_Y|_{\mathcal{O}_{X_0}}=i_* (\mathcal{O}_Y|_{\mathcal{O}_{Y_0}})$. By Proposition 3.1, \cite{zub} and Proposition II.5.8(c), \cite{hart},
 $i_* \mathcal{O}_Y|_{\mathcal{O}_{X_0}}$ is a quasi-coherent sheaf of $\mathcal{O}_{X_0}$-supermodules, hence, again by Proposition 3.1, \cite{zub}, it is a quasi-coherent sheaf of $\mathcal{O}_X$-supermodules. Corollary 3.2, \cite{zub}, infers that $\mathcal{J}_Y$ is quasi-coherent.

By Proposition \ref{equivalenceofcategories}, the converse statement is proved just as proving Proposition II.5.9, \cite{hart}. For coherency of $\mathcal{J}_Y$ when $X$ is Noetherian, copy verbatim the proof of Proposition II.5.9, \cite{hart}.
\end{proof}
\begin{cor}
Combining Proposition \ref{equivalenceofcategories} and Proposition \ref{closed super-subschemes and (quasi)coherent sheaves of super-ideals}, one sees that any closed super-subscheme of an affine superscheme
$SSpec \ A$ is isomorphic to $SSpec \ A/I$ for a super-ideal $I$ of $A$. 
\end{cor}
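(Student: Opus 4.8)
The plan is to combine the two cited propositions into a single short exact sequence and then recognize its terms concretely. First I would take a closed super-subscheme $Y$ of $X = SSpec\ A$ with associated closed embedding $i : Y^e \to X^e$. Proposition \ref{closed super-subschemes and (quasi)coherent sheaves of super-ideals} then tells me that the super-ideal sheaf $\mathcal{J}_Y = \ker(\mathcal{O}_X \to i_*\mathcal{O}_Y)$ is quasi-coherent, so that by construction I have a short exact sequence of quasi-coherent sheaves of $\mathcal{O}_X$-supermodules
\[ 0 \to \mathcal{J}_Y \to \mathcal{O}_X \to i_*\mathcal{O}_Y \to 0. \]

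Next I would transport this sequence through the equivalence of Proposition \ref{equivalenceofcategories}. Since $\mathcal{J}_Y$ is quasi-coherent, it has the form $\widetilde{I}$ for some $A$-supermodule $I$, and since the functor $M \mapsto \widetilde{M}$ is an equivalence of abelian categories --- in particular fully faithful and exact --- the monomorphism $\widetilde{I} = \mathcal{J}_Y \hookrightarrow \mathcal{O}_X = \widetilde{A}$ is the image of a monomorphism $I \hookrightarrow A$ of $A$-supermodules. Thus $I$ may be identified with a super-ideal of $A$. Applying exactness of $\widetilde{(-)}$ to the displayed sequence then yields $i_*\mathcal{O}_Y \simeq \widetilde{A/I}$ as sheaves of $\mathcal{O}_X$-supermodules, indeed of $\mathcal{O}_X$-superalgebras.

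Finally I would identify this data with $SSpec\ A/I$. The canonical projection $A \to A/I$ induces a morphism $j : SSpec\ A/I \to SSpec\ A$ whose underlying map is a closed embedding onto the set of prime super-ideals containing $I$, and which realizes $SSpec\ A/I$ as a closed super-subscheme of $X$ with $j_*\mathcal{O}_{SSpec\ A/I} \simeq \widetilde{A/I}$; hence its super-ideal sheaf is exactly $\widetilde{I} = \mathcal{J}_Y$. The uniqueness clause of Proposition \ref{closed super-subschemes and (quasi)coherent sheaves of super-ideals} asserts that a quasi-coherent super-ideal sheaf determines its closed super-subscheme uniquely, so from the coincidence of the two ideal sheaves I conclude $Y \simeq SSpec\ A/I$.

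The only points requiring care --- and where I expect the main (though mild) obstacle to lie --- are the two identifications I am treating as routine: that the abstract $A$-supermodule $I$ furnished by the equivalence genuinely embeds in $A$ as a super-ideal rather than merely mapping to it, and that $j_*\mathcal{O}_{SSpec\ A/I}$ agrees with $\widetilde{A/I}$ as a quotient of $\mathcal{O}_X$. Both reduce, after restriction to even parts, to the purely even statements used in the proof of Proposition II.5.9 of \cite{hart}, so no essentially new difficulty arises beyond bookkeeping with the $\mathbb{Z}_2$-grading.
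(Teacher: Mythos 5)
Your proposal is correct and is exactly the argument the paper intends: the corollary is stated as a direct combination of Propositions \ref{equivalenceofcategories} and \ref{closed super-subschemes and (quasi)coherent sheaves of super-ideals}, and your elaboration (realizing $\mathcal{J}_Y$ as $\widetilde{I}$ for a super-ideal $I\subseteq A$ via the equivalence, then invoking the uniqueness clause to identify $Y$ with $SSpec\ A/I$) is precisely that combination spelled out. No divergence from the paper's route.
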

The following lemma superizes Exercise II.5.7, \cite{hart}.
\begin{lm}\label{locallyfreesheaves}
Let $X$ be a Noetherian superscheme, and $\mathcal{F}$ be a coherent $\mathcal{O}_X$-supermodule. Then the following statements hold :
\begin{enumerate}
\item If the stalk $\mathcal{F}_x$ is a free $\mathcal{O}_x$-supermodule for some point $x\in X^e$, then there is a neighborhood $U$ of $x$ such that $\mathcal{F}|_U$ is a free $\mathcal{O}_U$-supermodule of the same rank;
\item $\mathcal{F}$ is a locally free $\mathcal{O}_X$-supermodule if and only if $\mathcal{F}_x$ is a free $\mathcal{O}_x$-supermodule for any $x\in X^e$.
\end{enumerate}
\end{lm}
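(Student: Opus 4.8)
The plan is to reduce everything to the affine, module-theoretic situation via Proposition \ref{equivalenceofcategories}, prove (1) by the standard ``exhibit a local isomorphism and spread it out'' argument adapted to the $\mathbb{Z}_2$-grading, and then deduce (2) formally from (1). Since both statements are local on $X^e$, I may assume $X=SSpec\ A$ with $A$ Noetherian and, by Proposition \ref{equivalenceofcategories}, $\mathcal{F}=\widetilde{M}$ for a finitely generated $A$-supermodule $M$; moreover $\mathcal{F}_x=M_{\mathfrak{p}}$ when $x=\mathfrak{p}$, and $\mathcal{O}_x=A_{\mathfrak{p}}=(A_0\setminus\mathfrak{p}_0)^{-1}A$. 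The decisive point, used repeatedly below, is that forming stalks inverts only \emph{even} elements, so the classical ``clearing denominators'' manipulations survive verbatim in the graded setting.

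For part (1), assume $M_{\mathfrak{p}}$ is a free $A_{\mathfrak{p}}$-supermodule, necessarily on a homogeneous basis; say it has rank $r\mid s$, with $r$ even and $s$ odd basis vectors. Each basis vector has the form $m_i/t_i$ with $m_i\in M$ homogeneous of the prescribed parity and $t_i\in A_0\setminus\mathfrak{p}_0$; since each $t_i$ becomes invertible after localization, the images of the $m_i$ themselves already form a homogeneous basis of $M_{\mathfrak{p}}$. Hence the parity-preserving graded homomorphism $\phi:A^{r\mid s}\to M$ sending the standard homogeneous basis to $m_1,\dots,m_{r+s}$ localizes at $\mathfrak{p}$ to an isomorphism. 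Because $A$ is Noetherian, $\ker\phi$ and $\operatorname{coker}\phi$ are finitely generated $A$-supermodules, so $\widetilde{\ker\phi}$ and $\widetilde{\operatorname{coker}\phi}$ are coherent, and their stalks at $\mathfrak{p}$ vanish because $\phi_{\mathfrak{p}}$ is an isomorphism.

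It remains to spread this out. Here I would invoke the elementary fact that a finitely generated $A$-supermodule $N$ with $N_{\mathfrak{p}}=0$ is annihilated on a basic open $D(f)$ for some $f\in A_0\setminus\mathfrak{p}_0$: each of the finitely many homogeneous generators is killed by an even element outside $\mathfrak{p}$, and one takes $f$ to be their product. Applying this to $\ker\phi$ and $\operatorname{coker}\phi$ and shrinking to a common $D(f)=:U\ni x$, the map $\phi$ becomes an isomorphism over $U$, whence $\mathcal{F}|_U\simeq\mathcal{O}_U^{\,r\mid s}$ is free of rank $r\mid s$. This proves (1). For (2), the ``only if'' direction is immediate, since the stalk at $x$ of a sheaf that is locally $\simeq\mathcal{O}^{\,r\mid s}$ is the free supermodule $\mathcal{O}_x^{\,r\mid s}$; and the ``if'' direction is exactly (1) applied at every point.

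I expect no genuinely hard step. The only place demanding care is the interaction between the $\mathbb{Z}_2$-grading and localization: one must work with \emph{homogeneous} bases and parity-preserving maps throughout, and keep track of the full super-rank $r\mid s$ (not merely a single integer) as the propagated invariant. Everything goes through precisely because $M_{\mathfrak{p}}=(A_0\setminus\mathfrak{p}_0)^{-1}M$ inverts only even elements, which keeps the ``support of a coherent sheaf is closed'' mechanism identical to the purely even case of Exercise II.5.7, \cite{hart}.
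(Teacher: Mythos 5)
Your proposal is correct and follows essentially the same route as the paper: both choose homogeneous elements of $M$ that give a basis of $M_{\mathfrak{p}}$, build a parity-preserving map from a free supermodule that localizes to an isomorphism, and spread this out over a basic open $D(f)$ with $f\in A_0\setminus\mathfrak{p}_0$ by killing the finitely generated kernel and cokernel (the paper cites Lemma 1.2 of \cite{zub2} and Bourbaki for this step, where you spell out the elementary support argument). Part (2) is deduced from (1) in the same formal way.
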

\begin{proof}
There are an open affine super-subscheme $U\simeq SSpec \ A$ of $X$ and a finitely generated $A$-supermodule $M$, such that $x\in U$ and $\mathcal{F}|_U\simeq\widetilde{M}$ respectively. Then $\mathcal{F}_x\simeq
M_{\mathfrak{p}}$ is a free $A_{\mathfrak{p}}$-supermodule, where $\mathfrak{p}$ is a prime superideal of $A$, that corresponds to the point $x$. In other words, there are (homogeneous) elements $m_1, \ldots , m_t\in M$, which form a basis of the free $A_{\mathfrak{p}}$-supermodule $M_{\mathfrak{p}}$.

Let $N$ denote a free $A$-supermodule with a basis $n_1, \ldots , n_t$, such that the parity of each $n_i$ coincides with the parity of corresponding $m_i$. Let $u : N\to M$ be a morphism of $A$-supermodules, induced by the map $n_i\mapsto m_i, 1\leq i\leq t$. Using Lemma 1.2, \cite{zub2}, and arguing as in \cite{bur}, II, \S 5, Proposition 2, one can easily show that  there is $f\in A_0\setminus\mathfrak{p}_0$ such that $(\mathrm{Coker})_f=(\mathrm{Ker})_f=0$, hence $u_f : N_f\to M_f$ is an isomorphism. In particular, $\widetilde{M}|_{D(f)}\simeq \widetilde{M_f}$ is a free sheaf.

The second statement is now obvious. 
\end{proof}

\section{K\"{a}hler superdifferentials}

\subsection{Supermodules of relative differential forms}

Let $A$ be a superring, $B$ be  an $A$-superalgebra, and $M$ be a left $B$-supermodule. Recall that $M$ is also regarded as an right $B$-supermodule via $mb=(-1)^{|b||m|}bm, b\in B, m\in M$.

An even or odd additive map $d : B\to M$ is called an {\it $A$-superderivation} if 
the following conditions hold :
\begin{enumerate}
\item $d(ab)=(-1)^{|a||d|}a d(b)+d(a)b$;
\item $da=0$ for any $a\in A$.
\end{enumerate}
Let $\mathrm{Der}_A(B, M)$ denote a $\mathbb{Z}_2$-graded abelian group with $\mathrm{Der}_A(B, M)_i$ consisting of all superderivations of parity $i=0, 1$. Observe that $\mathrm{Der}_A(B, M)$ has a natural structure of $B$-supermodule.

The pairs $(M, d)$, where $M$ is a $B$-supermodule and $d\in\mathrm{Der}_A(B, M)$, form a category with (even) morphisms $f : M\to M'$ of $B$-supermodules such that $d'=fd$.

The proof of the following lemma is standard and we leave it for the reader.
\begin{lm}\label{universaldiff} (see \cite{man}, chapter 3, \S 1.8, or \cite{hart}, II.8) There are $B$-supermodules $\Omega_{B/A, ev}$ and $\Omega_{B/A, odd}$, two superderivations $d_0 : B\to\Omega_{B/A, ev}$ and $d_1 : B\to\Omega_{B/A, odd}$, where $|d_0|=0, |d_1|=1$, such that for any $B$-supermodule $M$, compositions with $d_0$ and $d_1$ give isomorphisms (of abelian groups)
\[\mathrm{Der}_A(B, M)_0\simeq\mathrm{Hom}_B(\Omega_{B/A, ev}, M) \ \mbox{and} \ \mathrm{Der}_A(B, M)_1\simeq\mathrm{Hom}_B(\Omega_{B/A, odd}, M)\] respectively. 
\end{lm}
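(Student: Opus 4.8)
My plan is to construct $\Omega_{B/A, ev}$ and $\Omega_{B/A, odd}$ by hand as universal objects, running the classical generators-and-relations construction of the module of K\"{a}hler differentials twice; the two copies differ only in the parity attached to the universal symbols and, correspondingly, in the Koszul sign appearing in the Leibniz relation. First I would treat the even case. Let $F_0$ be the free $B$-supermodule on homogeneous symbols $d_0 b$, one for each homogeneous $b\in B$, with parity declared to be $|d_0 b| = |b|$, so that $b\mapsto d_0 b$ is even. I then set $\Omega_{B/A, ev} = F_0/N_0$, where $N_0$ is the sub-supermodule generated by the additivity elements $d_0(b+b') - d_0 b - d_0 b'$, the Leibniz elements $d_0(bb') - b\, d_0 b' - (d_0 b) b'$ (sign-free because $|d_0|=0$), and $d_0 a$ for $a\in A$. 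Each generator of $N_0$ is homogeneous --- in the Leibniz relation all three terms have parity $|b|+|b'|$ --- so the quotient is a $B$-supermodule and the induced map $d_0\colon B\to\Omega_{B/A, ev}$ is by design an even $A$-superderivation, conditions (1) and (2) being exactly the relations imposed.

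The odd case is identical except that I assign the shifted parity $|d_1 b| = |b|+1$ to the symbols, making $b\mapsto d_1 b$ odd, and the Leibniz relation must carry the sign dictated by $|d_1|=1$; that is, I quotient the corresponding free supermodule $F_1$ by
\[ d_1(bb') - (-1)^{|b|}\, b\, d_1 b' - (d_1 b)\, b' \]
together with the additivity relations and $d_1 a$ for $a\in A$. Again every relation is homogeneous, now of parity $|b|+|b'|+1$, and the induced $d_1\colon B\to\Omega_{B/A, odd}$ is an odd $A$-superderivation.

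Universality is then the formal verification that composition with $d_i$ yields the claimed isomorphisms. Given $\delta\in\mathrm{Der}_A(B,M)_0$, the $B$-linear map $F_0\to M$ sending $d_0 b\mapsto\delta(b)$ annihilates $N_0$ precisely because $\delta$ is additive, obeys the sign-free Leibniz rule, and kills $A$; it therefore factors through a unique even morphism $\phi\colon\Omega_{B/A, ev}\to M$ with $\phi\circ d_0=\delta$, and $\phi$ is unique because the $d_0 b$ generate $\Omega_{B/A, ev}$ over $B$. Hence $\phi\mapsto\phi\circ d_0$ and $\delta\mapsto\phi$ are mutually inverse, giving $\mathrm{Hom}_B(\Omega_{B/A, ev}, M)\simeq\mathrm{Der}_A(B,M)_0$. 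The odd computation is the same, the only differences being that $\phi\circ d_1$ has parity $0+1=1$ and that the relation killed by $\delta\in\mathrm{Der}_A(B,M)_1$ is the signed Leibniz relation displayed above.

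There is no genuine difficulty here: the whole argument is bookkeeping of Koszul signs, and the single point that must be watched is the Leibniz relation, whose sign $(-1)^{|a||d_i|}$ is the only feature separating the even from the odd construction. I note in passing that the two objects are related by parity shift, $\Omega_{B/A, odd}\simeq\Pi\,\Omega_{B/A, ev}$, the twisted $B$-action on the shifted supermodule being exactly what reproduces the sign $(-1)^{|b|}$ above; and that the even object also admits the diagonal description $\Omega_{B/A, ev}\simeq I/I^2$, where $I=\ker(B\otimes_A B\to B)$ is the kernel of the even multiplication map and $d_0 b = 1\otimes b - b\otimes 1 \bmod I^2$, the braided sign rule of the super tensor product making $d_0$ even. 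I would, however, present the direct construction, which treats both parities uniformly and requires fixing no conventions for $\Pi$.
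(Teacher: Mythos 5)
Your construction is correct: the parities you assign to the symbols ($|d_0b|=|b|$, $|d_1b|=|b|+1$), the sign $(-1)^{|b|}$ in the odd Leibniz relation, and the verification that composition with $d_i$ is bijective onto $\mathrm{Der}_A(B,M)_i$ all match the paper's conventions, and the paper itself omits the proof as ``standard,'' pointing to exactly this generators-and-relations construction in the cited references. This is essentially the same approach the paper intends (and your closing remarks reproduce the paper's own Remark on the parity shift $\Pi\Omega_{B/A,ev}\simeq\Omega_{B/A,odd}$ and its Proposition identifying $\Omega_{B/A,ev}$ with $I/I^2$), so nothing further is needed.
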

The $B$-supermodules $\Omega_{B/A, ev}$ and $\Omega_{B/A, odd}$ are called the {\it even and odd supermodules of relative differential forms} of $B$ over $A$ respectively. In what follows we denote $\Omega_{B/A, ev}$ just by $\Omega_{B/A}$. 
\begin{rem}\label{finiteness}
\emph{If $B$ is a finitely generated $A$-superalgebra, then $\Omega_{B/A}$ is a finitely generated $B$-supermodule.}
\end{rem}
\begin{rem}\label{anexample}
\emph{Let $B$ be a finitely generated $A$-superalgebra over a Noetherian superalgebra $A$. We have $B\simeq A[X_1, \ldots, X_m\mid Y_1, \ldots, Y_n]/J$, where the super-ideal $J$ is generated by finitely many homogeneous elements, say $f_1, \ldots, f_t$. One can easily show that $\Omega_{B/A}\simeq F/N$, where $F$ is a free $B$-supermodule, freely generated  by the elements $d_0 X_1, \ldots, d_0 X_m, d_0 Y_1, \ldots, d_0 Y_n$, and $N$ is a supersubmodule generated by $d_0 f_1, \ldots, d_0 f_t$.}
\end{rem}
\begin{rem}\label{parityshift}
\emph{Let $\Pi d_0$ denote an odd superderivation $B\to\Pi\Omega_{B/A, ev}$ that takes $b$ to $(-1)^{|b|}d_0 b, b\in B$. Then $(\Pi\Omega_{B/A, ev}, \Pi d_0)\simeq (\Omega_{B/A, odd}, d_1)$ with respect to the isomorphism $\Pi\Omega_{B/A, ev}\simeq \Omega_{B/A, odd}$ of $B$-supermodules, induced by the map 
$d_0 b\mapsto (-1)^{|b|}d_1 b$. Lemma \ref{universaldiff} infers that for any any $B$-supermodule $M$ there is an isomorphism $\mathrm{Der}_A(B, M)_0\to\mathrm{Der}_A(B, M)_1$ of abelian groups, that takes $(M, d)$ to $(\Pi M, \Pi d)$, where
$\Pi d(b)=(-1)^{|b|} db, b\in B$.}
\end{rem}
\begin{pr}\label{anotherdef}
Let $f : B\otimes_A B\to B$ be the diagonal homomorphism $b\otimes b'\mapsto bb', b, b'\in B$, and let $I=\ker f$. Define a map 
$d : B\to I/I^2$ by \[db=1\otimes b-b\otimes 1\pmod{I^2}.\]
Then $(I/I^2, d)\simeq
(\Omega_{B/A, ev}, d_0)$. In particular, $(\Pi (I/I^2), \Pi d)\simeq (\Omega_{B/A, odd}, d_1)$.
\end{pr}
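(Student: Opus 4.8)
The plan is to identify $(I/I^2, d)$ with $(\Omega_{B/A, ev}, d_0)$ by showing that it enjoys the same universal property, namely that composition with $d$ yields a natural isomorphism $\mathrm{Hom}_B(I/I^2, M) \xrightarrow{\sim} \mathrm{Der}_A(B, M)_0$ for every $B$-supermodule $M$; the claim then follows from Lemma \ref{universaldiff}. Before doing so I must fix the $B$-supermodule structure on $I/I^2$. Since $f$ is a homomorphism of super-rings with kernel $I$, the quotient $I/I^2$ is annihilated by $I$ and hence becomes a module over $(B\otimes_A B)/I \simeq B$; the two evident maps $b \mapsto b \otimes 1$ and $b \mapsto 1 \otimes b$ induce the same structure because their difference $db$ lies in $I$. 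Throughout I use that $B \otimes_A B$ carries the super-commutative multiplication $(b_1 \otimes b_2)(b_3 \otimes b_4) = (-1)^{|b_2||b_3|} b_1 b_3 \otimes b_2 b_4$.

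First I would check that $d$ is a well-defined even $A$-superderivation. The identity $da = 0$ for $a \in A$ is immediate, as $1 \otimes a = a \otimes 1$ over $A$. For the Leibniz rule the key observation is that $db \cdot db' \equiv 0 \pmod{I^2}$, since both factors lie in $I$. Expanding this product and rearranging, while tracking the sign $(-1)^{|b||b'|}$ produced by the multiplication above, yields $d(bb') = b\cdot db' + db\cdot b'$ in $I/I^2$, which is exactly the defining relation for an even superderivation. At the same time I would record that $I/I^2$ is generated as a $B$-supermodule by the elements $db$: every $\sum_i b_i \otimes b_i' \in I$ satisfies $\sum_i b_i b_i' = 0$, and the exact identity $\sum_i (b_i \otimes 1)\, db_i' = \sum_i b_i \otimes b_i'$ then shows that the $db$ generate $I$ modulo $I^2$.

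The main content is the universal property. Given an even superderivation $D \in \mathrm{Der}_A(B, M)_0$, I would form the square-zero extension $B \oplus M$, a super-commutative $A$-superalgebra in which $M$ is a super-ideal with $M^2 = 0$. The two maps $\lambda : b \mapsto (b, 0)$ and $\mu : b \mapsto (b, Db)$ are both homomorphisms of $A$-superalgebras; here the evenness of $D$ is essential, since the requirement $\mu(b)\mu(b') = \mu(bb')$ reduces precisely to the Leibniz rule for an even $D$. By the coproduct property of $B \otimes_A B$ in super-commutative $A$-superalgebras, $\lambda$ and $\mu$ assemble into a unique homomorphism $g : B \otimes_A B \to B \oplus M$ with $g(b \otimes 1) = (b,0)$ and $g(1 \otimes b) = (b, Db)$. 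Since $f$ is the composite of $g$ with the projection $B \oplus M \to B$, the map $g$ carries $I$ into $M$ and $I^2$ into $M^2 = 0$, hence descends to $\bar g : I/I^2 \to M$; a direct computation gives $\bar g(db) = Db$ and shows $\bar g$ is $B$-linear. Uniqueness of $\bar g$ follows from the generation statement of the previous step, and this establishes $(I/I^2, d) \simeq (\Omega_{B/A, ev}, d_0)$.

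Finally, the assertion $(\Pi(I/I^2), \Pi d) \simeq (\Omega_{B/A, odd}, d_1)$ is obtained by applying the parity-shift functor $\Pi$ and invoking Remark \ref{parityshift}. The step I expect to be most delicate is the sign bookkeeping in the super-commutative multiplications of $B \otimes_A B$ and of $B \oplus M$; in particular, the fact that $b \mapsto (b, Db)$ is an algebra homomorphism holds only for even $D$, which is exactly the reason the odd case cannot be handled directly and must instead be routed through the parity shift.
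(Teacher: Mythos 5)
Your argument is correct and is essentially the proof the paper intends: the authors simply defer to Matsumura's Proposition (26.C), whose square-zero-extension argument you have written out in full with the appropriate super-signs, and they handle the odd case exactly as you do, via Remark \ref{parityshift}. The sign bookkeeping in the Leibniz computation and the observation that $b\mapsto(b,Db)$ is multiplicative only for even $D$ are both verified correctly.
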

\begin{proof}
The proof of the first statement can be copied from \cite{mats}, Proposition (26.C). The second one follows by Remark \ref{parityshift}.
\end{proof}
\begin{lm}\label{baseextension}(see \cite{mats}, p.184)
If $A'$ and $B$ are $A$-superalgebras, let $B'=B\otimes_A A'$. Then $\Omega_{B'/A'}\simeq\Omega_{B/A}\otimes_B B'$. Furthermore, if $S$ is a multiplicative system in $B_0$, then $\Omega_{S^{-1}B/A}\simeq S^{-1}\Omega_{B/A}$. 
\end{lm}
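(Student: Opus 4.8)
The plan is to deduce both isomorphisms from the universal property of Lemma \ref{universaldiff} together with the Yoneda lemma, so that no computation with generators and relations is needed. Throughout I work only with the even module $\Omega_{B/A}=\Omega_{B/A, ev}$ and the even universal derivation $d_0$; since Lemma \ref{universaldiff} identifies the functor $\mathrm{Der}_A(B,-)_0$ with $\mathrm{Hom}_B(\Omega_{B/A},-)$, it suffices in each case to produce, for every target supermodule $M$, a chain of isomorphisms of abelian groups that is natural in $M$ and then invoke Yoneda.

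For the base change statement, let $M$ be a $B'$-supermodule, which I also regard as a $B$-supermodule via the structure map $B\to B'$, $b\mapsto b\otimes 1$. First I would establish a natural isomorphism
\[\mathrm{Der}_{A'}(B',M)_0\simeq\mathrm{Der}_A(B,M)_0.\]
In one direction an even $A'$-superderivation $D:B'\to M$ restricts along $B\to B'$ to an even $A$-superderivation. In the other, given an even $A$-superderivation $\delta:B\to M$, I extend it by $D(b\otimes a')=\delta(b)(1\otimes a')$, the product taken in the right $B'$-supermodule structure of $M$; one checks that this assignment is balanced over $A$ (here the vanishing $\delta|_A=0$ is used) and satisfies the super-Leibniz rule. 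Composing this with the universal property of $\Omega_{B/A}$ and with the extension-of-scalars adjunction
\[\mathrm{Hom}_B(\Omega_{B/A},M)\simeq\mathrm{Hom}_{B'}(\Omega_{B/A}\otimes_B B',M),\]
which holds because $M$ is a $B'$-supermodule, yields a chain of isomorphisms natural in $M$, and Yoneda gives $\Omega_{B'/A'}\simeq\Omega_{B/A}\otimes_B B'$.

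For the localization statement I argue in exactly the same style. For an $S^{-1}B$-supermodule $M$ I would show that restriction along $B\to S^{-1}B$ is an isomorphism $\mathrm{Der}_A(S^{-1}B,M)_0\simeq\mathrm{Der}_A(B,M)_0$: every even $A$-superderivation $\delta:B\to M$ extends uniquely to $S^{-1}B$ by the quotient rule $\delta(b/s)=s^{-1}\delta(b)-(b/s)s^{-1}\delta(s)$, which is unambiguous precisely because each $s\in S\subseteq B_0$ is even and so contributes no sign, and uniqueness follows by applying Leibniz to $b=s\cdot(b/s)$. Combining this with the universal property and the localization adjunction $\mathrm{Hom}_B(\Omega_{B/A},M)\simeq\mathrm{Hom}_{S^{-1}B}(S^{-1}\Omega_{B/A},M)$, naturality in $M$ and Yoneda give $\Omega_{S^{-1}B/A}\simeq S^{-1}\Omega_{B/A}$.

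The only genuine work, and the point where the super setting departs from the classical argument in \cite{mats}, lies in verifying that the two extension formulas define honest \emph{even} superderivations: one must track the Koszul signs produced by the right-module convention $mb=(-1)^{|b||m|}bm$ when $a'\in A'_1$, and confirm $A$-balancedness. I expect this sign bookkeeping to be the main (though routine) obstacle; once it is settled, both isomorphisms are formal consequences of the universal property and Yoneda. If the analogous statements for the odd modules $\Omega_{-/-, odd}$ are wanted, they follow at once by applying the parity shift of Remark \ref{parityshift}.
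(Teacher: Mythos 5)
Your proposal is correct and is essentially the paper's own argument in functorial dress: unwinding your Yoneda isomorphism at $M=\Omega_{B/A}\otimes_B B'$ (respectively $M=S^{-1}\Omega_{B/A}$) recovers exactly the explicit universal superderivations $b\otimes a'\mapsto d_0 b\otimes a'$ and $\frac{b}{s}\mapsto\frac{s\,d_0 b-(d_0 s)b}{s^2}$ that the paper writes down and declares universal. The sign bookkeeping you flag (together with $A$-balancedness and well-definedness on fractions) is indeed the only real content, and it is the same verification the paper leaves to the reader.
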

\begin{proof}
It is easy to see that the map $B'\to\Omega_{B/A}\otimes_B B'$, defined by
\[b\otimes a'\mapsto d_0 b\otimes a', b\in B, a'\in A',\]
is an $A'$-superderivation and it satisfies the property of universality. 

Similarly, the map $S^{-1}B\to S^{-1}\Omega_{B/A}$, defined as 
\[\frac{b}{s}\mapsto\frac{s d_0 b-(d_0 s)b}{s^2}, b\in B, s\in S,\]
is an $A$-superderivation, that also satisfies the property of universality. 
\end{proof}
In propositions below one finds some standard properties of supermodules of relative differential forms. Their proofs can be copied from \cite{mats}, chapter 10,  just verbatim. 
\begin{pr}\label{firstexactsequence}(First Exact Sequence)
Let $A\to B\to C$ be superrings and superring morphisms. Then there is a natural sequence of $C$-supermodules 
\[\Omega_{B/A}\otimes_B C\to\Omega_{C/A}\to \Omega_{C/B}\to 0.\]
Moreover, the map $\Omega_{B/A}\otimes_B C\to\Omega_{C/A}$ has a left inverse if and only if any $A$-superderivation of $B$ into any $C$-supermodule $T$ can be extended to a superderivation of $C$ into $T$.
\end{pr}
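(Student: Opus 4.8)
The plan is to reduce the whole statement to the universal property of Lemma~\ref{universaldiff} and then run a Yoneda-type (dualization) argument, following \cite{mats}, Chapter~10, while inserting the appropriate Koszul signs.

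First I would construct the two maps. The even universal superderivation $d_0^{C/B}\colon C\to\Omega_{C/B}$ is in particular an $A$-superderivation (it annihilates $B$, hence $A$), so by Lemma~\ref{universaldiff} it factors through a unique even $C$-supermodule morphism $\Omega_{C/A}\to\Omega_{C/B}$ sending $d_0^{C/A}c\mapsto d_0^{C/B}c$. Likewise the composite $B\to C\xrightarrow{d_0^{C/A}}\Omega_{C/A}$ is an $A$-superderivation of $B$ into $\Omega_{C/A}$, the latter viewed as a $B$-supermodule through $B\to C$; it thus factors through an even $B$-supermodule morphism $\Omega_{B/A}\to\Omega_{C/A}$, and extension of scalars along $B\to C$ produces the desired $C$-supermodule map $\phi\colon\Omega_{B/A}\otimes_B C\to\Omega_{C/A}$.

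To prove exactness I would dualize: for an arbitrary $C$-supermodule $T$, apply the functor of even $C$-supermodule morphisms into $T$ and show that
\[0\to\mathrm{Hom}_C(\Omega_{C/B},T)\to\mathrm{Hom}_C(\Omega_{C/A},T)\to\mathrm{Hom}_C(\Omega_{B/A}\otimes_B C,T)\]
is exact. Using Lemma~\ref{universaldiff} together with the extension--restriction adjunction $\mathrm{Hom}_C(\Omega_{B/A}\otimes_B C,T)\simeq\mathrm{Hom}_B(\Omega_{B/A},T)$, this sequence is identified with
\[0\to\mathrm{Der}_B(C,T)_0\to\mathrm{Der}_A(C,T)_0\to\mathrm{Der}_A(B,T)_0,\]
whose first arrow is the inclusion of $B$-superderivations among the $A$-superderivations and whose second arrow is restriction along $B\to C$. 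Exactness of this last sequence is immediate: every $B$-superderivation restricts to zero on $B$, and conversely an $A$-superderivation $C\to T$ that vanishes on $B$ is automatically $B$-linear, hence a $B$-superderivation. Since the category of $C$-supermodules is abelian and this holds for every $T$, the Yoneda argument returns the exactness of the original three-term sequence, surjectivity of $\Omega_{C/A}\to\Omega_{C/B}$ included.

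For the final assertion I would use that $\phi$ admits an (even) left inverse if and only if the pullback $\phi^{*}\colon\mathrm{Hom}_C(\Omega_{C/A},T)\to\mathrm{Hom}_C(\Omega_{B/A}\otimes_B C,T)$ is surjective for every $T$---the nontrivial direction following by taking $T=\Omega_{B/A}\otimes_B C$ and lifting the identity. Under the identifications above, $\phi^{*}$ is exactly the restriction map $\mathrm{Der}_A(C,T)_0\to\mathrm{Der}_A(B,T)_0$, whose surjectivity for all $T$ says precisely that every even $A$-superderivation of $B$ extends to $C$; finally, replacing $T$ by $\Pi T$ as in Remark~\ref{parityshift} shows this is equivalent to the extension property for superderivations of arbitrary parity, which is the formulation in the statement. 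The skeleton being classical, the hard part is purely the sign bookkeeping: one must check that the Leibniz rule with its sign $(-1)^{|a||d|}$ survives each factorization and the extension of scalars, and that the adjunction isomorphism is parity-compatible. Once the conventions for the odd case are fixed through $\Pi$, these checks are routine and the proof reads as in \cite{mats}.
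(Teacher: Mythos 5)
Your proposal is correct and follows essentially the same route as the paper, which simply refers the reader to the dualization argument of \cite{mats}, Chapter 10: construct the two maps from the universal property of Lemma \ref{universaldiff}, test exactness against an arbitrary $C$-supermodule $T$ via the identification of $\mathrm{Hom}$ groups with $\mathrm{Der}$ groups, and read off the left-inverse criterion from surjectivity of the restriction map $\mathrm{Der}_A(C,T)\to\mathrm{Der}_A(B,T)$. Your additional care with the parity shift $\Pi$ to pass from even superderivations to superderivations of arbitrary parity is exactly the sign bookkeeping the superization requires.
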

\begin{pr}\label{secondexactsequence}(Second Exact Sequence)
Let $B$ be an $A$-superalgebra, let $I$ be a super-ideal of $B$, let $C=B/I$ and $B'=B/I^2$. There is a natural exact sequence of $C$-supermodules
\[I/I^2\to\Omega_{B/A}\otimes_B C\to\Omega_{C/A}\to 0,\]
where the first map takes $b+I^2$ to $d_0 b\otimes 1, b\in I$. Moreover, $\Omega_{B/A}\otimes_B C\simeq\Omega_{B'/A}\otimes_{B'} C$ and the map $I/I^2\to\Omega_{B/A}\otimes_B C$ has a left inverse if and only if the extension
\[0\to I/I^2\to B'\to C\to 0\]
is trivial.
\end{pr}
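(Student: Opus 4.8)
The plan is to follow the commutative argument of \cite{mats}, Chapter 10, while tracking parities; the point will be that all maps that occur are even, so no extraneous signs arise. First I would dispose of the right-hand part. Applying the First Exact Sequence (Proposition \ref{firstexactsequence}) to $A\to B\to C=B/I$ and noting that $\Omega_{C/B}=0$ (since $C$ is a quotient of $B$, every $B$-superderivation of $C$ vanishes on the image of $B$, which is all of $C$), I obtain exactness of $\Omega_{B/A}\otimes_B C\to\Omega_{C/A}\to 0$. It then remains to splice in $\alpha\colon I/I^2\to\Omega_{B/A}\otimes_B C$, $b+I^2\mapsto d_0 b\otimes 1$. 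That $\alpha$ is well defined and even $C$-linear is a direct computation: for $b,b'\in I$ the Leibniz rule for the even derivation $d_0$ gives $d_0(bb')\otimes 1=d_0 b'\otimes\overline{b}+d_0 b\otimes\overline{b'}=0$, because $\overline{b}=\overline{b'}=0$ in $C$.

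For exactness at the middle term I would argue by the universal property rather than with elements. For every $C$-supermodule $M$, combining the tensor--hom adjunction with Lemma \ref{universaldiff} and Remark \ref{parityshift} (which together identify $\mathrm{Hom}_B(\Omega_{B/A},M)\simeq\mathrm{Der}_A(B,M)$ as graded abelian groups, in both even and odd degrees) turns the contravariant Hom of the three-term sequence into
\[0\to\mathrm{Der}_A(C,M)\to\mathrm{Der}_A(B,M)\xrightarrow{\rho}\mathrm{Hom}_C(I/I^2,M),\]
where the first map is restriction along $B\to C$ and $\rho(D)=D|_I$. Here one checks directly that $D|_I$ kills $I^2$ and is $C$-linear (because $I$ annihilates the $C$-supermodule $M$), that the first map is injective (as $B\to C$ is surjective), and that $\ker\rho$ consists exactly of the superderivations vanishing on $I$, i.e. those factoring through $C$. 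Thus this sequence is exact for every $M$, and by the usual Yoneda argument the original sequence of $C$-supermodules is exact.

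Next I would treat the identification $\Omega_{B/A}\otimes_B C\simeq\Omega_{B'/A}\otimes_{B'}C$. The surjection $B\to B'$ induces a natural morphism between the two objects; to see it is an isomorphism I again apply $\mathrm{Hom}_C(-,M)$ and the adjunction above, reducing the claim to bijectivity of the restriction $\mathrm{Der}_A(B',M)\to\mathrm{Der}_A(B,M)$. This is immediate: any $A$-superderivation $D\colon B\to M$ vanishes on $I^2$, since for $b,b'\in I$ both terms of $D(bb')$ lie in $I\cdot M=0$, so $D$ factors uniquely through $B'=B/I^2$. Yoneda then yields the stated natural isomorphism.

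Finally, the splitting criterion. Under the adjunction above, an even left inverse $r$ of $\alpha$ corresponds precisely to an \emph{even} $A$-superderivation $D\colon B\to I/I^2$ whose restriction to $I$ is the canonical projection $I\to I/I^2$. Given such a $D$, I would set $\sigma\colon B\to B'$, $\sigma(b)=(b+I^2)-D(b)$; using $(I/I^2)^2=0$ in $B'$ and the evenness of $D$ one verifies that $\sigma$ is an $A$-superalgebra homomorphism killing $I$, hence factoring through an $A$-superalgebra section $C\to B'$ of $B'\to C$, so the square-zero extension $0\to I/I^2\to B'\to C\to 0$ is trivial. Conversely, a section $s\colon C\to B'$ produces the even derivation $b\mapsto (b+I^2)-s(\overline{b})$ with the required restriction to $I$, hence a left inverse of $\alpha$. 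The main point to watch is the second paragraph: one must ensure that the isomorphisms of Lemma \ref{universaldiff} are compatible with the $\mathbb{Z}_2$-grading (this is exactly what Remark \ref{parityshift} supplies), so that testing against all homogeneous $M$ legitimately detects exactness in the super-module category; once this is secured, every sign that could appear is trivial because the relevant derivations and module maps are even.
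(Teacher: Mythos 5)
Your proof is correct and is essentially the paper's own argument: the paper simply states that the proof "can be copied from \cite{mats}, chapter 10, just verbatim," and what you have written is precisely Matsumura's derivation-functor (Yoneda) argument for the conormal sequence and the square-zero splitting criterion, carried over with the parity bookkeeping made explicit. The only added content in your version is the (correct) observation that Remark \ref{parityshift} guarantees the graded Hom--derivation identifications in both parities, which is exactly the point the paper leaves implicit.
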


\subsection{Sheaves of K\"{a}hler superdifferentials}

\begin{lm}\label{anopenimmersion}
Let $f : X\to Y$ be a superscheme morphism. For any open affine super-subschemes $U\subseteq X$ and $V\subseteq Y$ such that $f(U)\subseteq V$, the induced morphism $U\times_V U\to X\times_Y X$ is an isomorphism onto an open super-subscheme of $X\times_Y X$.
\end{lm}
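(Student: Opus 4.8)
The plan is to reduce the statement to the classical scheme-theoretic fact, which is established in the purely even setting (see Hartshorne, the construction of the diagonal in II.4 and the argument preceding the definition of $\Omega_{X/Y}$ in II.8), and then transfer it across the functor-of-points equivalence $\mathcal{SV}\simeq\mathcal{SF}$ recorded in the subsection on the functorial approach. First I would observe that the morphism $U\times_V U\to X\times_Y X$ in question is induced by the open immersions $U\hookrightarrow X$ and $V\hookrightarrow Y$ together with the hypothesis $f(U)\subseteq V$; concretely, the two projections $U\times_V U\to U\hookrightarrow X$ assemble, via the universal property of the fibre product $X\times_Y X$, into the canonical map whose image we must analyze. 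Working in $\mathcal{SF}$, where fibre products are computed pointwise on each super-ring $R$ (the functor $T\mapsto T(R)$ preserves limits), the $R$-points of $U\times_V U$ are pairs $(x_1,x_2)\in U(R)\times U(R)$ that agree in $V(R)$, while the $R$-points of $X\times_Y X$ are pairs in $X(R)\times X(R)$ agreeing in $Y(R)$. Since $U(R)\subseteq X(R)$ and $V(R)\subseteq Y(R)$ are the subsets cut out by the open immersions, the induced map on $R$-points is manifestly injective, and its image is precisely the locus of pairs in $X\times_Y X$ both of whose components land in $U$.

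The substantive point, therefore, is to show that this image is an \emph{open} super-subscheme, i.e. that the monomorphism in question is an open immersion rather than merely a monomorphism of functors. For this I would argue that the image coincides with $p_1^{-1}(U)\cap p_2^{-1}(U)$, where $p_1,p_2:X\times_Y X\to X$ are the two projections: a pair $(x_1,x_2)$ lies in the image exactly when $x_1\in U$ and $x_2\in U$ and the images agree in $Y$, but agreement in $Y$ together with $f(x_i)\in V$ forces agreement already in $V$, so no further condition is imposed beyond $x_1,x_2\in U$. Since open immersions are stable under base change and intersection, $p_1^{-1}(U)\cap p_2^{-1}(U)$ is an open super-subscheme of $X\times_Y X$, and the natural map from $U\times_V U$ to it is an isomorphism because both represent the same subfunctor. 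Here one uses that $U\times_V U$ is itself an open super-subscheme of $U\times_U U\simeq U$ pulled back appropriately, but more directly that restricting the fibre product to an open over each factor computes the fibre product of the open super-subschemes.

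To make the identification $U\times_V U\simeq p_1^{-1}(U)\cap p_2^{-1}(U)$ rigorous I would check it affine-locally: covering $X$, $Y$, $U$, $V$ by open affine super-subschemes and using that for super-rings $SSpec$ turns tensor products into fibre products (the anti-equivalence of affine superschemes with super-rings stated earlier), the statement becomes the assertion that for super-rings $A\to B$, $A\to C$ and a localization $B_f$, one has $B_f\otimes_A C\simeq (B\otimes_A C)_{f\otimes 1}$, which is the standard compatibility of localization with tensor product over a super-ring, valid verbatim in the super-commutative setting. Patching these affine identifications using that $\widetilde{(\,\cdot\,)}$ is an equivalence (Proposition \ref{equivalenceofcategories}) yields the global isomorphism onto the open super-subscheme.

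The main obstacle I anticipate is not any single deep computation but rather the bookkeeping of the various universal properties: one must be careful that the map $U\times_V U\to X\times_Y X$ really is the one induced by the inclusions and that its corestriction to $p_1^{-1}(U)\cap p_2^{-1}(U)$ is an isomorphism of superschemes and not merely a bijection on topological points, since in the super-setting the structure sheaf carries nilpotent odd data that a naive point-set argument would miss. This is exactly why I would route the argument through the functorial description $\mathcal{SF}$, where representability and the identification of subfunctors automatically track the full superscheme structure, rather than attempting a direct sheaf-theoretic gluing. With that framework in place, the proof is essentially the super-analogue of the classical one, and indeed the paper's own remark that ``the interested reader can find all necessary notions/definitions in \cite{maszub1}'' signals that the functorial machinery is precisely the intended tool.
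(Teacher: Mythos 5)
Your proposal is correct and follows essentially the same route as the paper: the paper's proof simply transfers the statement to the functor category $\mathcal{SF}$ via the equivalence from \cite{maszub1} and declares the functorial version obvious, citing \cite{jan}, I.1.7(3). What you have done is spell out that ``obvious'' step --- identifying $U\times_V U$ with the open subfunctor $p_1^{-1}(U)\cap p_2^{-1}(U)$ of $X\times_Y X$ on $R$-points, using that the open immersion $V\to Y$ is a monomorphism --- which is precisely the content of the cited reference.
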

\begin{proof}
By the remark after Proposition 5.12, \cite{maszub1}, one needs to check the analogous statement in the category $\mathcal{SF}$, which is obvious (see \cite{jan}, I.1.7(3)).
\end{proof}
For the above morphism, let $\Delta$ denote the diagonal morphism $X\to X\times_Y X$. 
Arguing as in \cite[\S 8]{hart} and using Lemma \ref{anopenimmersion}, one can show that $\Delta$ is an isomorphism of $X$ onto a closed super-subscheme $\Delta(X)$ of an open super-subscheme $W$ of $X\times_Y X$, which is defined by a sheaf of super-ideals $\mathcal{J}\subseteq \mathcal{O}_W$.

Following \cite[\S 8]{hart}, we define the \emph{sheaf of K\"{a}hler superdifferentials} of $X$ over $Y$ to be the $\mathcal{O}_X$-supermodule $\Omega_{X/Y}=\Delta^*(\mathcal{J}/\mathcal{J}^2)$. The isomorphism $\Delta$ induces
\[\mathcal{O}_{\Delta(X)}\simeq\mathcal{O}_{\Delta(X)}\simeq\mathcal{O}_{W}/\mathcal{J},\]
through which we identify first $\mathcal{O}_X$ with $\mathcal{O}_{W}/\mathcal{J}$, and then $\Omega_{X/Y}$ with $\mathcal{O}_{W}/\mathcal{J}$-supermodule $\mathcal{J}/\mathcal{J}^2$.

More precisely, let $U=SSpec \ B$ be an affine open super-subscheme of $X$ and $V=SSpec \ A$ be an affine open super-subscheme of $Y$ such that $f(U)\subseteq V$, then $U\times_V U\simeq SSpec \ (B\otimes_A B)$ and $\Delta(X)\cap (U\times_V U)$ is a closed super-subscheme defined by the kernel of the diagonal homomorphism $B\otimes_A B\to B$. Proposition \ref{anotherdef} implies that $\Omega_{U/V}\simeq\widetilde{\Omega_{B/A}}$ and by covering $X$ and $Y$ with $U$ and $V$ as above, one can define $\Omega_{X/Y}$  by gluing the corresponding sheaves $\widetilde{\Omega_{B/A}}$ (cf. \cite{hart}, Remark II.8.9.2). In particular, the $\mathcal{O}_X$-supermodule $\Omega_{X/Y}$ is quasi-coherent.
Moreover, if $X$ is Noetherian and $f$ is a morphism of finite type, then $\Omega_{X/Y}$ is coherent.
Finally, gluing superderivations $d_0 : B\to\Omega_{B/A}$ one can construct a morphism $\mathcal{O}_X\to\Omega_{X/Y}$ of sheaves of superspaces, which is a superderivation of the local superrings at any point.

Again, as in \cite[\S 8]{hart} we formulate sheaf counterparts of the algebraic results of the previous subsection. Their proofs are standard and we leave them for the reader. 
\begin{pr}\label{baseextensionforsheaves}
If $f' : X'=X\times_Y Y'\to Y'$ is a base extension of $f : X\to Y$ with $g : Y'\to Y$, then $\Omega_{X'/Y'}\simeq g'^*(\Omega_{X/Y})$, where $g' : X'\to X$ is the first projection.  
\end{pr}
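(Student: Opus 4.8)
The plan is to imitate the proof of Proposition II.8.10 in \cite{hart}, reducing the assertion to the purely algebraic base-change computation already recorded in Lemma \ref{baseextension}. Since both $\Omega_{X'/Y'}$ and $g'^*(\Omega_{X/Y})$ are quasi-coherent $\mathcal{O}_{X'}$-supermodules, and since by Proposition \ref{equivalenceofcategories} a morphism of quasi-coherent sheaves is an isomorphism as soon as it is so after restriction to each member of an affine open covering, it suffices to describe the two sheaves on a suitable family of affine charts, match them there by a \emph{natural} isomorphism, and then argue that these local isomorphisms glue.

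First I would choose compatible affine charts. Pick open affine super-subschemes $V=SSpec \ A\subseteq Y$, $V'=SSpec \ A'\subseteq Y'$ and $U=SSpec \ B\subseteq X$ with $g(V')\subseteq V$ and $f(U)\subseteq V$. By the construction of the fibre product, $U':=U\times_V V'$ is an open affine super-subscheme of $X'=X\times_Y Y'$ with coordinate super-ring $B'=B\otimes_A A'$, and as $U$ and $V'$ range over such charts the $U'$ cover $X'$. On $U'$ the construction of the sheaf of K\"ahler superdifferentials by gluing the sheaves $\widetilde{\Omega_{B/A}}$ (the paragraph following Lemma \ref{anopenimmersion}) gives $\Omega_{X'/Y'}|_{U'}\simeq\widetilde{\Omega_{B'/A'}}$. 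On the other hand, $g':X'\to X$ restricts over $U'$ to the superscheme morphism $SSpec \ B'\to SSpec \ B$ induced by the canonical map $B\to B'$, $b\mapsto b\otimes 1$; hence, by the behaviour of pullback on associated sheaves (Proposition 2.1(5), \cite{zub}, as invoked in the proof of Proposition \ref{equivalenceofcategories}), $g'^*(\Omega_{X/Y})|_{U'}\simeq\widetilde{\Omega_{B/A}\otimes_B B'}$. Now Lemma \ref{baseextension} furnishes the canonical isomorphism $\Omega_{B'/A'}\simeq\Omega_{B/A}\otimes_B B'$ of $B'$-supermodules, induced by the universal superderivation $b\otimes a'\mapsto d_0 b\otimes a'$; applying the exact functor $\widetilde{(-)}$ identifies the two local descriptions, giving an isomorphism $g'^*(\Omega_{X/Y})|_{U'}\simeq\Omega_{X'/Y'}|_{U'}$.

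The main obstacle, as always in such statements, is the gluing: one must check that these chart-wise isomorphisms agree on overlaps $U'_1\cap U'_2$ and so patch to a global isomorphism. The point is that the isomorphism of Lemma \ref{baseextension} is \emph{canonical}, being induced by the universal property of $\Omega_{B'/A'}$, and the second assertion of Lemma \ref{baseextension} records precisely its compatibility with localization, i.e. $\Omega_{S^{-1}B/A}\simeq S^{-1}\Omega_{B/A}$ for a multiplicative $S\subseteq B_0$. Consequently, over a common affine refinement of two charts the two induced isomorphisms are each determined by restriction from the coarser charts, hence coincide; since the gluing data defining $\Omega_{X/Y}$ and $\Omega_{X'/Y'}$ were built from exactly these localization maps, they intertwine with $g'^*$ and with the isomorphisms of Lemma \ref{baseextension}. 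I expect this compatibility verification to be the only genuine work, but it amounts to a diagram chase combining the localization statement of Lemma \ref{baseextension} with the functoriality of $\widetilde{(-)}$ and of pullback, and presents no essential difficulty beyond bookkeeping. This yields the desired global isomorphism $\Omega_{X'/Y'}\simeq g'^*(\Omega_{X/Y})$.
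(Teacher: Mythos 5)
Your proof is correct and follows exactly the route the paper intends: the authors explicitly leave this proposition to the reader as a ``standard'' sheaf counterpart of Lemma \ref{baseextension}, to be proved as in \cite[II, \S 8]{hart}, which is precisely your reduction to affine charts, the identification $\Omega_{U'/V'}\simeq\widetilde{\Omega_{B'/A'}}$ and $g'^*(\Omega_{X/Y})|_{U'}\simeq\widetilde{\Omega_{B/A}\otimes_B B'}$, and the gluing via the canonicity and localization compatibility of the algebraic isomorphism. Nothing further is needed.
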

\begin{pr}\label{firstexactsequenceforsheaves}
Let $f : X\to Y$ and $g : Y\to Z$ be superscheme morphisms. Then there is an exact sequence of $\mathcal{O}_X$-supermodules
\[f^*\Omega_{Y/Z}\to\Omega_{X/Z}\to\Omega_{X/Y}\to 0.\]
\end{pr}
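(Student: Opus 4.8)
The plan is to deduce the sheaf statement from its affine algebraic counterpart, Proposition \ref{firstexactsequence}, by working on a suitable affine cover and invoking the exactness of the tilde functor. First I would observe that all three terms are quasi-coherent $\mathcal{O}_X$-supermodules: the two superdifferential sheaves are quasi-coherent by their very construction in the preceding subsection, while $f^*\Omega_{Y/Z}$ is the pullback of a quasi-coherent sheaf and hence quasi-coherent. Since exactness of a sequence of sheaves may be tested locally, it then suffices to construct the maps and verify exactness on the members of an affine open cover of $X$.

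Next I would choose compatible affine charts. Around any point of $X$ pick an open affine super-subscheme $U=SSpec \ B\subseteq X$ together with open affine super-subschemes $V=SSpec \ A\subseteq Y$ and $W=SSpec \ C\subseteq Z$ such that $f(U)\subseteq V$ and $g(V)\subseteq W$; such charts cover $X$. On functions these yield superring morphisms $C\to A\to B$, so that $B$ is a $C$-superalgebra through $A$. Using the local descriptions $\Omega_{X/Y}|_U\simeq\widetilde{\Omega_{B/A}}$ and $\Omega_{X/Z}|_U\simeq\widetilde{\Omega_{B/C}}$ (from the construction via Proposition \ref{anotherdef}), together with $\Omega_{Y/Z}|_V\simeq\widetilde{\Omega_{A/C}}$ and the identification $(f^*\Omega_{Y/Z})|_U\simeq\widetilde{\Omega_{A/C}\otimes_A B}$ (the standard formula $f^*\widetilde{N}\simeq\widetilde{N\otimes_A B}$ for pullback of quasi-coherent sheaves, as already used in the proof of Proposition \ref{equivalenceofcategories}), the problem reduces on $U$ to the sequence obtained by applying $\widetilde{(-)}$ to
\[\Omega_{A/C}\otimes_A B\to\Omega_{B/C}\to\Omega_{B/A}\to 0.\]
This is exactly the algebraic First Exact Sequence of Proposition \ref{firstexactsequence} for the tower $C\to A\to B$. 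Since the tilde functor is an equivalence of categories by Proposition \ref{equivalenceofcategories}, it is exact, so the associated sequence of sheaves is exact on $U$. Finally I would note that the two maps are the canonical functorial ones (the first induced by functoriality of $\Omega_{-/Z}$ along $f$, the second the canonical surjection relating differentials over $Z$ to differentials over $Y$), hence independent of the chosen charts; consequently they agree on overlaps and glue to global morphisms $f^*\Omega_{Y/Z}\to\Omega_{X/Z}\to\Omega_{X/Y}\to 0$ whose exactness, being local, now follows.

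The only genuinely delicate point is the base-change identification $(f^*\Omega_{Y/Z})|_U\simeq\widetilde{\Omega_{A/C}\otimes_A B}$ and the verification that the locally defined maps coincide with the canonical ones, so that gluing is automatic; everything else is formal once the exactness of $\widetilde{(-)}$ is in hand. I expect no real obstacle beyond this bookkeeping, since the super signs have already been absorbed into the algebraic statements (Propositions \ref{firstexactsequence} and \ref{anotherdef}) and into the even superderivation $d_0$ used throughout.
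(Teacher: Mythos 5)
Your proposal is correct and is precisely the standard argument the paper has in mind: the paper explicitly leaves the proof to the reader as "standard," following Hartshorne's \S II.8, and your reduction to the affine case via $\Omega_{U/V}\simeq\widetilde{\Omega_{B/A}}$, the algebraic First Exact Sequence (Proposition \ref{firstexactsequence}), the exactness of the tilde functor (Proposition \ref{equivalenceofcategories}), and gluing by canonicality of the maps is exactly that argument. No gaps.
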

\begin{pr}\label{secondexactsequenceforsheaves}
Let $f : X\to Y$ be a superscheme morphism, and let $Z$ be a closed super-subscheme of $X$, defined by a superideal sheaf $\mathcal{J}$. There is an exact sequence of $\mathcal{O}_Z$-supermodules
\[\mathcal{J}/\mathcal{J}^2\to \Omega_{X/Y}\otimes_{\mathcal{O}_X}\mathcal{O}_Z\to\Omega_{Z/Y}\to 0.\] 
\end{pr}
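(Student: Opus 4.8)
The plan is to reduce the statement to the affine, purely algebraic Second Exact Sequence of Proposition \ref{secondexactsequence}, and then to globalize by gluing, in the spirit of the proof of Proposition II.8.12 in \cite{hart}. The key structural fact I would exploit is that exactness of a sequence of sheaves of $\mathcal{O}_Z$-supermodules is a local property on $Z$, so it is enough to construct the three arrows and verify exactness over the members of a suitable affine cover.

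First I would fix a covering adapted to the morphism: cover $X$ by open affine super-subschemes $U \simeq SSpec \ B$ for which there exists an open affine super-subscheme $V \simeq SSpec \ A$ of $Y$ with $f(U)\subseteq V$. By the Corollary following Proposition \ref{closed super-subschemes and (quasi)coherent sheaves of super-ideals}, the closed super-subscheme $Z\cap U$ has the form $SSpec \ B/I$ for a super-ideal $I$ of $B$, and $\mathcal{J}|_U\simeq\widetilde{I}$. On such a chart, using Proposition \ref{equivalenceofcategories} together with the description of $\Omega_{X/Y}$ obtained by gluing the sheaves $\widetilde{\Omega_{B/A}}$ (Proposition \ref{anotherdef}), I would identify the three terms of the proposed sequence with the associated sheaves $\widetilde{I/I^2}$, $\widetilde{\Omega_{B/A}\otimes_B (B/I)}$ and $\widetilde{\Omega_{(B/I)/A}}$ on $Z\cap U = SSpec \ B/I$.

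With these identifications in place, Proposition \ref{secondexactsequence} supplies an exact sequence of $B/I$-supermodules $I/I^2\to\Omega_{B/A}\otimes_B (B/I)\to\Omega_{(B/I)/A}\to 0$, and since the equivalence $M\mapsto\widetilde{M}$ of Proposition \ref{equivalenceofcategories} is exact (it is built from localization, which is exact), applying it yields an exact sequence of sheaves over $Z\cap U$. The two maps supplied by Proposition \ref{secondexactsequence} are canonical — the first sends $b+I^2\mapsto d_0 b\otimes 1$, and the second comes from the First Exact Sequence — hence they are compatible with localization and with passage to smaller affine charts, which is what permits the locally defined sequences to be glued into a single sequence of $\mathcal{O}_Z$-supermodules on $Z$.

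The hard part will be the gluing, that is, checking that the two arrows are independent of the choice of charts $(U,V)$ and agree on overlaps such as $SSpec \ B_f$. This rests on the naturality of everything involved: the base-change and localization compatibilities of the supermodule of differentials recorded in Lemma \ref{baseextension}, together with the functoriality of the connecting map $I/I^2\to\Omega_{B/A}\otimes_B (B/I)$ in $(A,B,I)$. Once this compatibility is verified, the arrows are globally well defined, and exactness transfers from the affine pieces because it is local on $Z$, which finishes the argument.
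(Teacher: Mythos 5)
Your proposal is correct and is precisely the standard argument the paper has in mind: the authors explicitly leave the proofs of these sheaf counterparts to the reader as "standard," following \cite[\S 8]{hart}, and your reduction to the affine Second Exact Sequence (Proposition \ref{secondexactsequence}) via the exact equivalence $M\mapsto\widetilde{M}$ followed by gluing over a compatible affine cover is exactly that standard route. No gaps; the compatibility on overlaps you flag as the "hard part" is indeed the only point requiring care, and it is handled by the naturality and localization statements you cite.
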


\section{Krull super-dimension}

\subsection{Odd parameters}

Let $R$ be a Noetherian super-ring. Assume that the Krull dimension $\mathrm{Kdim}(R_0)$ of $R_0$ is finite. 
Let $y_1,\dots, y_s$ be a sequence of odd elements in $R_1$. For any subset $I$ of the set $\underline{s}=\{1, 2, \ldots, s\}$ we let 
\[ y^I = \prod_{i\in I} y_i \]
denote the product in $R$. This product can change only by sign, according to the
order of the consisting elements. We may not and we will not refer to the order to
discuss the product. Let
\[ \mathrm{Ann}_{R_0}(y^I) = \{ r \in R_0 \mid r y^I=0\} \]
denote the ideal of $R_0$ consisting of those elements which annihilate $y^I$. 

We say that $y_1, \dots, y_s$ form a \emph{system of odd parameters} if
\[ \mathrm{Kdim}(R_0)=\mathrm{Kdim}(R_0/\mathrm{Ann}_{R_0}(y^{\underline{s}})). \]
In other words, the elements $y_1, \ldots, y_s$ form a system of odd parameters of $R$ if and only if
there is a longest prime chain of $R_0$, say $\mathfrak{p}_0\subseteq \ldots\subseteq\mathfrak{p}_n, n=\mathrm{Kdim}(R_0)$, such that $\mathrm{Ann}_{R_0}(y^{\underline{s}})\subseteq \mathfrak{p}_0$.

Since for any $I\subseteq\underline{s}$ the ideal $\mathrm{Ann}_{R_0}(y^I)$ is contained in $\mathrm{Ann}_{R_0}(y^{\underline{s}})$, the elements $y_i, i\in I$, form a system of odd parameters whenever $y_1, \ldots, y_s$ do.

Let $r= \mathrm{Kdim}(R_0)$. Let $s$ be the largest
number of those elements in $R_1$ which form a system of odd parameters. 
The \emph{Krull super-dimension} $\mathrm{Ksdim}(R)$ of $R$ is defined by
\[ \mathrm{Ksdim}(R)= r \mid s . \]
Moreover, the Krull dimension of $R_0$, that is $r$, is called the \emph{even Krull dimension} of $R$, and $s$ is called the \emph{odd Krull dimension} of $R$. They are denoted by $\mathrm{Ksdim}_0(R)$ and $\mathrm{Ksdim}_1(R)$, respectively. 

Finally, $\mathrm{Ksdim}_1(R)=0$ if and only if for any $y\in R_1$ and for any prime chain $\mathfrak{p}_0\subseteq \ldots\subseteq\mathfrak{p}_n$ in $R_0$ of length $n=\mathrm{Kdim}(R_0)$ we have $\mathrm{Ann}_{R_0}(y)\not\subseteq\mathfrak{p}_0$. Moreover, since $R_1$ is a finitely generated $R_0$-module, the latter is equivalent to $\mathrm{Ann}_{R_0}(R_1)\not\subseteq\mathfrak{p}_0$ for any prime $\mathfrak{p}_0$ as above.

Let $B$ be a commutative ring and $M$ be a $B$-module. Recall that $\dim(M)$ denotes the dimension of $M$ as a $B$-module, i.e.
\[\dim(B)=\mathrm{Kdim}(B/\mathrm{Ann}_B(M)).\] 
\begin{pr}\label{another definition of odd super-dimension}
Let $R$ be a Noetherian super-ring with $\mathrm{Kdim}(R_0)<\infty$. Choose a system of generators of $R_0$-module $R_1$, say $y_1, \ldots, y_d$.
Then the following conditions are equivalent :
\begin{enumerate}
\item There is a system of odd parameters of $R$ of cardinality $l\geq 1$;
\item For some $1\leq i_1<\ldots <i_l\leq d$ the elements $y_{i_1}, \ldots , y_{i_l}$ form a system of odd parameters;
\item $\dim(R_1^l)=\mathrm{Kdim}(R_0)$.
\end{enumerate}
In particular, we have 
\[\mathrm{Ksdim}_1(R)=\max\{l\mid \dim(R_1^l)=\mathrm{Kdim}(R_0)\}.\]
\end{pr}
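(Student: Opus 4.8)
The plan is to reduce the whole statement to one computation of $\mathrm{Ann}_{R_0}(R_1^l)$ together with one elementary fact about the Krull dimension of a quotient by an intersection of ideals. The cheap implication $(2)\Rightarrow(1)$ is immediate, so the real content is $(1)\Rightarrow(3)$ and $(3)\Rightarrow(2)$.

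First I would pin down a convenient generating set of the $R_0$-module $R_1^l$. Since $R$ is super-commutative, every odd element squares to zero and the $y_i$ anticommute; hence, expanding each factor of a product of $l$ odd elements in the generators $y_1,\ldots,y_d$ and discarding the vanishing terms that carry a repeated index, one sees that such a product is an $R_0$-linear combination of the products $y^I$ with $I\subseteq\underline{d}$, $|I|=l$. Thus $R_1^l$ is generated over $R_0$ by $\{\,y^I : |I|=l\,\}$, and since the annihilator of a module is the intersection of the annihilators of a generating set,
\[ \mathrm{Ann}_{R_0}(R_1^l)=\bigcap_{|I|=l}\mathrm{Ann}_{R_0}(y^I). \]

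The key ingredient is the standard fact that for a Noetherian ring $B$ and finitely many ideals $\mathfrak{a}_1,\ldots,\mathfrak{a}_k$ one has $\mathrm{Kdim}\bigl(B/\bigcap_j\mathfrak{a}_j\bigr)=\max_j\mathrm{Kdim}(B/\mathfrak{a}_j)$, which follows from $V(\bigcap_j\mathfrak{a}_j)=\bigcup_j V(\mathfrak{a}_j)$ in $\mathrm{Spec}(B)$ and the fact that the dimension of a finite union of closed subsets is the maximum of their dimensions. Applying this with $B=R_0$ and $\mathfrak{a}_I=\mathrm{Ann}_{R_0}(y^I)$ yields
\[ \dim(R_1^l)=\mathrm{Kdim}\bigl(R_0/\mathrm{Ann}_{R_0}(R_1^l)\bigr)=\max_{|I|=l}\mathrm{Kdim}\bigl(R_0/\mathrm{Ann}_{R_0}(y^I)\bigr). \]
As each term on the right is at most $\mathrm{Kdim}(R_0)$, the left-hand side equals $\mathrm{Kdim}(R_0)$ if and only if some term attains $\mathrm{Kdim}(R_0)$, i.e. if and only if $\{y_i : i\in I\}$ is a system of odd parameters for some $I$ with $|I|=l$. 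This is exactly $(2)\Leftrightarrow(3)$.

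To close the cycle I would prove $(1)\Rightarrow(3)$ directly: if $z_1,\ldots,z_l$ is any system of odd parameters, then $z_1\cdots z_l\in R_1^l$ gives $\mathrm{Ann}_{R_0}(R_1^l)\subseteq\mathrm{Ann}_{R_0}(z_1\cdots z_l)$, whence $\dim(R_1^l)\ge\mathrm{Kdim}\bigl(R_0/\mathrm{Ann}_{R_0}(z_1\cdots z_l)\bigr)=\mathrm{Kdim}(R_0)$; the reverse inequality being automatic, $(3)$ holds. The ``in particular'' formula is then just the definition of $\mathrm{Ksdim}_1(R)$ as the largest cardinality of a system of odd parameters, rewritten via $(1)\Leftrightarrow(3)$ (with the convention $R_1^0=R_0$ covering the degenerate case where no system of positive cardinality exists). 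I do not anticipate a genuine obstacle: the only points needing care are the vanishing of products with a repeated odd index, which is precisely where super-commutativity enters, and the invocation of the dimension-of-union fact; the Noetherian hypothesis and $\mathrm{Kdim}(R_0)<\infty$ ensure all dimensions are finite and the annihilators are well behaved.
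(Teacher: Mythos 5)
Your proof is correct and follows essentially the same route as the paper: both rest on the identity $\mathrm{Ann}_{R_0}(R_1^l)=\bigcap_{|I|=l}\mathrm{Ann}_{R_0}(y^I)$ and on the observation that this intersection lies in a minimal prime of a longest chain if and only if one of the ideals $\mathrm{Ann}_{R_0}(y^I)$ does (which is what your dimension-of-a-finite-union fact encodes). The only cosmetic difference is that you make the generation of $R_1^l$ by the $y^I$ and the direction $(1)\Rightarrow(3)$ fully explicit, which the paper leaves implicit.
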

\begin{proof}
For a given $l\leq\mathrm{Ksdim}_1(R)$, let $z_1, \ldots, z_l$ be a system of odd parameters. Then
$\mathrm{Ann}_{R_0}(R_1^l)\subseteq \mathrm{Ann}_{R_0}(z^{\underline{l}})$ implies
$\dim(R_1^l)=\mathrm{Kdim}(R_0)$. Conversely, if $\dim(R_1^l)=\mathrm{Kdim}(R_0)$, then there is a longest prime chain $\mathfrak{p}_0\subseteq\ldots\subseteq\mathfrak{p}_r$ in $R_0$, where $r=\mathrm{Kdim}(R_0)$, such that
\[\cap_{I\subseteq\underline{d}, |I|=l}\mathrm{Ann}_{R_0}(y^I)=\mathrm{Ann}_{R_0}(R_1^l)\subseteq\mathfrak{p}_0,\]
and therefore $\mathrm{Ann}_{R_0}(y^I)\subseteq\mathfrak{p}_0$ for some $I$. 
Proposition is proven.
\end{proof}

\subsection{Regular sequences}

Recall that an odd element $y\in R$ is called \emph{odd regular}, if $\mathrm{Ann}_R(y)=Ry$ (cf. \cite[p.67]{sm}). Besides, a sequence $y_1, \ldots, y_t$ of odd elements of $R$ is said to be \emph{odd regular} if for each $i$ the element $y_i$ is odd regular modulo $Ry_1+\ldots+Ry_{i-1}$. By \cite[Corollary 3.1.2]{sm} the sequence $y_1, \ldots, y_t$ is odd regular if and only if $\mathrm{Ann}_R(y^{\underline{t}})=Ry_1+\ldots +Ry_t$. Thus any odd regular sequence form a system of odd parameters. 
\begin{lm}\label{whensuper-dimensionisdecreasing}
Let $R$ be a Noetherian super-ring of Krull super-dimension $r|s$. Let $y\in R_1$. The following hold :
\begin{itemize}
\item[(a)] If $y$ is contained in a system of odd parameters of the largest length $s$, then $\mathrm{Ksdim}_1(R/Ry)\geq s-1$.
\item[(b)] If $y_1, \ldots, y_t$ is an odd regular sequence, then $t\leq s$ and 
\[\mathrm{Ksdim}(R/(Ry_1+\ldots Ry_t))=r|(s-t).\]
\item[(c)] Any odd regular sequence can be extended to a system of odd parameters of the largest length $s$. 
\end{itemize}
\end{lm}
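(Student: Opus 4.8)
The plan is to treat the three parts in the order (a), the ``easy'' halves of (b), then the decisive lower bound, and finally (c); throughout I write $K_t=Ry_1+\dots+Ry_t$ and $r=\mathrm{Kdim}(R_0)$. For (a), I would first record that passing to $R/Ry$ leaves the even Krull dimension unchanged: the even component of $Ry$ is $R_1y\subseteq R_1R_1\subseteq I_R\subseteq\mathrm{nil}(R)$, so $R_1y\subseteq\mathrm{nil}(R_0)$ and $\mathrm{Kdim}((R/Ry)_0)=\mathrm{Kdim}(R_0/R_1y)=r$. Now write the given largest system as $y=y_1,y_2,\dots,y_s$ and fix a longest prime chain $\mathfrak{p}_0\subseteq\dots\subseteq\mathfrak{p}_r$ of $R_0$ with $\mathrm{Ann}_{R_0}(y^{\underline{s}})\subseteq\mathfrak{p}_0$. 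Since $R_1y_1\subseteq\mathfrak{p}_0$, this $\mathfrak{p}_0$ descends to a prime $\overline{\mathfrak{p}}_0$ sitting at the bottom of a longest chain of $(R/Ry_1)_0$. The key trick is multiplication by $y_1$: if $r\in R_0$ satisfies $ry_2\cdots y_s\in Ry_1$, then multiplying by $y_1$ annihilates the right-hand side (as $y_1^2=0$), so $ry_1\cdots y_s=0$ and $r\in\mathrm{Ann}_{R_0}(y^{\underline{s}})\subseteq\mathfrak{p}_0$. Hence the preimage of $\mathrm{Ann}_{(R/Ry_1)_0}(\bar y_2\cdots\bar y_s)$ lies in $\mathfrak{p}_0$, so $\bar y_2,\dots,\bar y_s$ is a system of odd parameters of $R/Ry_1$ and $\mathrm{Ksdim}_1(R/Ry_1)\ge s-1$.

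Exactly as above, $(K_t)_0=\sum_iR_1y_i\subseteq\mathrm{nil}(R_0)$, so $\mathrm{Kdim}((R/K_t)_0)=r$; and $t\le s$ holds because an odd regular sequence is a system of odd parameters while $s$ is the maximal length of such a system. For the upper bound I would exploit odd regularity through the identity $\mathrm{Ann}_R(y^{\underline{t}})=K_t$. Writing $S=R/K_t$, a class $\bar r\in S_0$ annihilates $S_1^{\,l}$ iff $rR_1^{\,l}\subseteq K_t=\mathrm{Ann}_R(y^{\underline{t}})$, i.e. iff $rR_1^{\,l}y^{\underline{t}}=0$; thus the pullback of $\mathrm{Ann}_{S_0}(S_1^{\,l})$ equals $\mathrm{Ann}_{R_0}(R_1^{\,l}y^{\underline{t}})$ and $\dim(S_1^{\,l})=\dim(R_1^{\,l}y^{\underline{t}})$. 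Since $R_1^{\,l}y^{\underline{t}}\subseteq R_1^{\,l+t}$ and, by Proposition \ref{another definition of odd super-dimension}, $\dim(R_1^{\,m})<r$ whenever $m>s$, for every $l>s-t$ I get $\dim(S_1^{\,l})\le\dim(R_1^{\,l+t})<r$, whence $\mathrm{Ksdim}_1(S)\le s-t$.

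I would obtain the matching lower bound $\mathrm{Ksdim}_1(R/K_t)\ge s-t$ by induction on $t$, reducing everything to a single odd regular element. Indeed, if $t\ge2$ then $\bar y_t$ is an odd regular element of $\tilde R=R/K_{t-1}$ (this is literally the definition of odd regularity of the sequence), $\mathrm{Ksdim}_1(\tilde R)=s-(t-1)$ by the inductive hypothesis, and $\tilde R/\tilde R\bar y_t=R/K_t$; so it suffices to prove the case $t=1$:
\[\text{if }y\in R_1\text{ is odd regular, then }\mathrm{Ksdim}_1(R/Ry)\ge\mathrm{Ksdim}_1(R)-1,\]
equivalently $\dim_{R_0}(yR_1^{\,s-1})=r$. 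This single-element bound is the heart of the lemma and the step I expect to be the main obstacle, because (as the introduction stresses) the odd dimension may increase under quotients, so odd regularity must be used in an essential way here. My proposed attack is to localize: choose a prime $\mathfrak{p}_0$ with $\dim(R_0/\mathfrak{p}_0)=r$ and $(R_1^{\,s})_{\mathfrak{p}_0}\ne0$, pass to $R'=R_{\mathfrak{p}_0}$ (where $y$ stays odd regular since $\mathrm{Ann}_{R'}(y)=R'y$ localizes), and prove $(R_1')^{s-1}\not\subseteq R'y$; the leverage is the $2$-periodic exact complex $\cdots\xrightarrow{\,y\,}R'\xrightarrow{\,y\,}R'\xrightarrow{\,y\,}R'$ afforded by $\mathrm{Ann}_{R'}(y)=R'y$, which I would play against the hypothesis $(R_1')^{s}\ne0$ (possibly via a secondary induction on $s$, or via the results on odd regular sequences in \cite{sm}).

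Finally I would derive (c) from the now-complete (b). Given an odd regular sequence $y_1,\dots,y_t$ with $t<s$, the equality $\mathrm{Ksdim}_1(R/K_t)=s-t$ furnishes a system of odd parameters $\bar z_{t+1},\dots,\bar z_s$ of $R/K_t$ of length $s-t$; lift them to $z_{t+1},\dots,z_s\in R_1$. Using once more that $\mathrm{Ann}_R(y^{\underline{t}})=K_t$, any $r\in R_0$ with $ry_1\cdots y_tz_{t+1}\cdots z_s=0$ satisfies $rz_{t+1}\cdots z_s\in\mathrm{Ann}_R(y^{\underline{t}})=K_t$, i.e. $\bar r$ annihilates $\bar z_{t+1}\cdots\bar z_s$; hence $\mathrm{Ann}_{R_0}(y_1\cdots y_tz_{t+1}\cdots z_s)$ is contained in the same longest-chain prime that witnesses the system $\bar z_{t+1},\dots,\bar z_s$ in $R/K_t$. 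Therefore $y_1,\dots,y_t,z_{t+1},\dots,z_s$ is a system of odd parameters of length $s$, extending the original sequence, which is exactly (c).
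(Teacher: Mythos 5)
Your treatment of (a), of the inequality $t\le s$, of the upper bound $\mathrm{Ksdim}_1(R/K_t)\le s-t$, and of the deduction of (c) from (b) is correct; indeed your one-shot upper bound via Proposition \ref{another definition of odd super-dimension} and the identity $K_t=\mathrm{Ann}_R(y^{\underline{t}})$ is cleaner than the paper's, which treats $t=1$ and iterates. The genuine gap is the lower bound $\mathrm{Ksdim}_1(R/Ry)\ge s-1$ for a single odd regular $y$, which you correctly isolate as the crux and then do not prove: ``possibly via a secondary induction on $s$, or via the results on odd regular sequences in \cite{sm}'' is a plan, not an argument. Moreover, your localization reduction, though valid ($\mathfrak{p}_0$ is a minimal prime, so $(R_0)_{\mathfrak{p}_0}$ is zero-dimensional and the annihilator condition becomes exactly $(R_1')^{s-1}\not\subseteq R'y$), does not obviously close: from $(R_1')^{s-1}\subseteq R'y$ one only gets $(R_1')^{s-1}y=0$, which kills those products of $s$ odd elements that involve $y$ but not, a priori, all of $(R_1')^{s}$; so the hypothesis $(R_1')^{s}\ne 0$ is not immediately contradicted, and the two-periodic complex has yet to be brought to bear.

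The paper closes this step by a different route: having shown (your same computation, run with $\mathrm{Ann}_R(y)=Ry$) that any system of odd parameters $\bar y_1,\dots,\bar y_k$ of $R/Ry$ lifts to a system $y_1,\dots,y_k,y$ of $R$, it gets the upper bound $\mathrm{Ksdim}_1(R/Ry)\le s-1$ and then invokes part (a) for the reverse inequality. That invocation requires knowing that the odd regular element $y$ is contained in a system of odd parameters of the maximal length $s$ --- equivalently, that $\dim_{R_0}(yR_1^{s-1})=r$ --- which is precisely the point you are missing as well (it is the $t=1$ case of (c), and in both your write-up and the paper's, (c) is derived from (b)). To repair your proof you must either establish this extendability property of an odd regular element directly, or give an independent argument that $\dim(yR_1^{s-1})=r$; until one of these is supplied, the lower bound in (b), and hence (c), is not established.
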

\begin{proof}
(a) Let $y_1, \ldots , y_s$ be a system of odd parameters such that $y_s = y$. Then
$y_1, \ldots , y_{s-1}$ form a system of odd parameters modulo $Ry$, which proves (a). Indeed, there obviously holds
\[\mathrm{Ann}_{R_0/R_1 y}(y^{\underline{(s-1)}}\mathrm{mod}(Ry))\subseteq\mathrm{Ann}_{R_0}(y^{\underline{s}})/R_1 y,\]
and the latter is obviously included in the first of
\[\mathfrak{p}_0/R_1 y\subseteq \ldots\subseteq\mathfrak{p}_r/R_1 y,\]
where $(Ry_1\subseteq) \mathfrak{p}_0\subseteq \ldots\subseteq\mathfrak{p}_r$ is some longest prime chain of $R_0$.

(b) To prove this when $t=1$, assume that $y$ is odd regular. Obviously we have $\mathrm{Ksdim}_0(R/Ry)=r$ and $s\geq 1$. If $\mathrm{Ksdim}_1(R/Ry)\geq s$, then there is a system of odd parameters of length $s$ modulo $Ry$, say $y_1, \ldots, y_s$. Since $ry^{\underline{s}}y=0$ is equivalent to $ry^{\underline{s}}\in Ry$, $\mathrm{Ann}_{R_0/R_1 y}(y^{\underline{s}}Ry)$ coincides with
$\mathrm{Ann}_{R_0}(y^{\underline{s}}y)/R_1 y$. Therefore, $y_1, \ldots, y_s, y$ form a system of odd parameters. This contradiction, combined with (a), proves the result when $t=1$. For the general case use the obtained result repeatedly.

(c) Assume that $y_1, \ldots, y_t$ is an odd regular sequence. Then we have $t\leq s$ by
(b). Moreover, odd elements $y_{t+1}, \ldots , y_s$ can be chosen so that they form, modulo
$Ry_1 + \ldots + Ry_t$, a system of odd parameters. We see that $y_1, \ldots , y_t, \ldots , y_s$ is a desired system of odd parameters.
\end{proof}

\subsection{Noether Normalization Theorem for superalgebras}

Let $K$ be a field. Suppose that $A$ is a finitely generated $K$-superalgebra. Then the $K$-algebra
$A_0$ is finitely generated. By the Noether Normalization Theorem \cite[(14G)]{mats}, $A_0$ contains
a polynomial subalgebra $B=k[X_1,\dots, X_r]$ over which $A_0$ is integral, and so $r = \mathrm{Ksdim}_0(A)$. 

The following proposition shows that odd parameters are nothing else but elements {\bf algebraically independent} over a subalgebra generated by "even" parameters. Geometrically, this means that the even and odd components of the Krull super-dimension of $A$ are equal to the number of even and odd {\bf degrees of freedom} of the superscheme $SSpec \ A$. More precisely, if $\mathrm{Ksdim}(A)=r\mid s$, then there is a certain (finite) superscheme morphism 
\[SSpec \ A \to A^{r|s}=SSpec \ K[X_1, \ldots X_r\mid Y_1, \ldots , Y_s].\]
\begin{pr}\label{NNT}
For a sequence of odd elements $y_1,\dots, y_s$ the following are equivalent:
\begin{itemize}
\item[(a)] $y_1,\dots, y_s$ form a system of odd parameters;
\item[(b)] For some/any polynomial subalgebra $B \subset A_0$ as above, 
$\mathrm{Ann}_B(y^{\underline{s}}):=B \cap \mathrm{Ann}_{A_0}(y^{\underline{s}})$ equals $0$;
\item[(c)] For some/any polynomial subalgebra $B \subset A_0$ as above, the $B$-superalgebra map
\[ \nu : B[Y_1,\dots,Y_s] \to A \]
which is defined on the polynomial $B$-superalgebra in $s$ odd variables by $\nu(Y_i)=y_i$, $1 \le i \le s$,
is an injection. 
\end{itemize}
In particular, $\mathrm{Ksdim}_1(A)=0$ if and only if $\mathrm{Ann}_{B}(A_1)\neq 0$.
\end{pr}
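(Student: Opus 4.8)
The plan is to fix one polynomial subalgebra $B = K[X_1,\dots,X_r]\subseteq A_0$ as provided by the Noether Normalization Theorem, over which $A_0$ is integral and with $r=\mathrm{Ksdim}_0(A)=\mathrm{Kdim}(A_0)$, and to reduce the whole statement to the dimension theory of this integral extension. Write $\mathfrak{a}=\mathrm{Ann}_{A_0}(y^{\underline{s}})$. Since $A_0$ is integral over $B$, the quotient $A_0/\mathfrak{a}$ is integral over the image of $B$, namely over $B/(B\cap\mathfrak{a})=B/\mathrm{Ann}_B(y^{\underline{s}})$; as integral extensions preserve Krull dimension, this gives $\mathrm{Kdim}(A_0/\mathfrak{a})=\mathrm{Kdim}(B/\mathrm{Ann}_B(y^{\underline{s}}))$. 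Because $B$ is a polynomial ring, it is a domain of dimension $r$, so by Krull's principal ideal theorem $\mathrm{Kdim}(B/\mathfrak{b})=r$ if and only if $\mathfrak{b}=0$ (a single nonzero element already forces the dimension to drop). Combining these, condition (a), which asserts $\mathrm{Kdim}(A_0/\mathfrak{a})=r$, is equivalent to $\mathrm{Ann}_B(y^{\underline{s}})=0$, which is (b). Since this argument is valid for every admissible $B$ while (a) makes no reference to $B$, the "for some" and "for any" versions of (b) both coincide with (a).

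Next I would treat the equivalence of (b) and (c). Recall that $B[Y_1,\dots,Y_s]=\wedge_B(B^s)$ is $B$-free on the monomials $Y^I$, $I\subseteq\underline{s}$, and that $\nu$ is the $B$-linear map sending $Y^I\mapsto y^I$. The implication (c)$\Rightarrow$(b) is immediate: restricting $\nu$ to the top component $B\cdot Y^{\underline{s}}$ shows that $b\,y^{\underline{s}}=0$ forces $b=0$, i.e. $\mathrm{Ann}_B(y^{\underline{s}})=0$. For the reverse implication, which is the heart of the matter, suppose $\mathrm{Ann}_B(y^{\underline{s}})=0$ and take a nonzero element $\sum_I b_I Y^I$ of $\ker\nu$, so $\sum_I b_I y^I=0$ in $A$. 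I would choose an index set $I_0$ of minimal cardinality among those with $b_{I_0}\neq 0$ and multiply the relation by $y^{\underline{s}\setminus I_0}$. Since $y_i^2=0$, a summand $y^I y^{\underline{s}\setminus I_0}$ survives only when $I\subseteq I_0$, and minimality of $|I_0|$ kills every proper subset, leaving $\pm b_{I_0}y^{\underline{s}}=0$. Hence $b_{I_0}\in\mathrm{Ann}_B(y^{\underline{s}})=0$, a contradiction, so $\nu$ is injective and (b)$\Rightarrow$(c) holds.

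Finally, the "in particular" clause follows by taking $s=1$. By the equivalence (a)$\Leftrightarrow$(b), a single odd element $y$ forms a system of odd parameters exactly when $\mathrm{Ann}_B(y)=0$, so $\mathrm{Ksdim}_1(A)=0$ means $\mathrm{Ann}_B(y)\neq 0$ for every odd $y$. Choosing generators $y_1,\dots,y_d$ of the $A_0$-module $A_1$ (finite by Lemma \ref{Noetherian}) and using that $B$ is a domain, I can multiply nonzero annihilators $0\neq b_j\in\mathrm{Ann}_B(y_j)$ to obtain a nonzero element of $\bigcap_j \mathrm{Ann}_B(y_j)=\mathrm{Ann}_B(A_1)$; the converse inclusion being trivial, this yields $\mathrm{Ksdim}_1(A)=0\Leftrightarrow\mathrm{Ann}_B(A_1)\neq 0$. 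The one genuinely delicate point in the whole argument is the injectivity step (b)$\Rightarrow$(c); everything else is standard dimension theory of the integral extension $B\subseteq A_0$.
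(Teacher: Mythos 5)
Your argument is correct and follows essentially the same route as the paper's proof: the equivalence of (a) and (b) via the dimension equality for the integral extension $B\subseteq A_0$ together with the fact that a nonzero ideal of the polynomial domain $B$ forces a drop in dimension, the implication (b) $\Rightarrow$ (c) by choosing a nonzero coefficient $a_{I}$ of minimal $|I|$ and multiplying by $y^{\underline{s}\setminus I}$, and the final clause from the finite generation of $A_1$ over $B$. Your write-up merely fills in details (e.g.\ the product of annihilators in the ``in particular'' step) that the paper leaves as obvious.
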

\begin{proof}
There holds (a) $\Leftrightarrow$ (b) $\Leftarrow$ (c). Indeed, note
from \cite[Theorem 20]{mats} that $\mathrm{Kdim}(A_0/\mathrm{Ann}_{A_0}(y^{\underline{s}}))=
\mathrm{Kdim}(B/\mathrm{Ann}_B(y^{\underline{s}}))$. Since $B$ is an integral domain, any its non-zero ideal has a hight at least $1$. Thus 
\[\mathrm{Kdim}(B/\mathrm{Ann}_B(y^{\underline{s}}))\leq \mathrm{Kdim}(B)-\mathrm{ht}(\mathrm{Ann}_B(y^{\underline{s}})) < \mathrm{Kdim}(B)\]
if and only if $\mathrm{Ann}_B(y^{\underline{s}})\neq 0$. 

For (b) $\Rightarrow$ (c), we wish to prove,
assuming (b), that 
\[ \sum_{I, I\subseteq\underline{s}}a_I\, y^I = 0,
\quad a_I\in B, \]
implies that all the coefficients $a_I$ equal $0$. Assume the contrary. Choose $a_I \neq 0$ with minimal $|I|$. Then, multiplying by the product $y^{\underline{s}\setminus I}$, one obtains $a_I y^{\underline{s}}=0$, a contradiction. 

Since $A_1$ is a finitely generated $B$-module, the last statement is now obvious. 
\end{proof}

Choose elements $y_1, \ldots , y_n\in A_1$, which form a system of generators of the $B$-module $I_A/I_A^2\simeq A_1/A_1^3$. Then the $B$-module $I_A$ is generated by the elements $y^I=y_{i_1}\ldots y_{i_k}$, where $I=\{i_1<\ldots < i_k\}$ runs over all subsets of $\underline{n}=\{1, 2, \ldots , n\}$.

For each $I$ let $J_I$ denote the super-ideal $Ann_B(y^I)$. These super-ideals are partially ordered by $J_I\subseteq J_{I'}$ whenever $I\subseteq I'$. Define a set $\Gamma=\{ I\subseteq\underline{n}\mid J_I=0\}$. By the above, for any $I\in\Gamma$ the inclusion $I'\subseteq I$ infers $I'\in\Gamma$. Let $k$ denote $\max\{|I|\mid I\in\Gamma\}$, where $|I|$ denotes the cardinality of $I$.
\begin{lm}\label{odd-Krull} We have $k=\mathrm{Ksdim}_1(A)$.
\end{lm}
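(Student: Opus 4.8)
The plan is to read off both inequalities $k \le \mathrm{Ksdim}_1(A)$ and $k \ge \mathrm{Ksdim}_1(A)$ from the two characterizations already in hand. First I would record the basic translation provided by Proposition \ref{NNT}: for a subset $I \subseteq \underline{n}$, the condition $I \in \Gamma$ (that is, $J_I = \mathrm{Ann}_B(y^I) = 0$) is exactly the condition that the odd elements $\{y_i\}_{i \in I}$ form a system of odd parameters of $A$. Thus $k$ is the largest length of a system of odd parameters that can be chosen from among the fixed generators $y_1, \ldots, y_n$; since every such system is in particular a system of odd parameters, the inequality $k \le \mathrm{Ksdim}_1(A)$ is immediate, by choosing $I \in \Gamma$ with $|I| = k$.

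For the reverse inequality I would use Proposition \ref{another definition of odd super-dimension}, which gives $\mathrm{Ksdim}_1(A) = \max\{l \mid \dim(A_1^l) = \mathrm{Kdim}(A_0)\}$. Writing $r = \mathrm{Kdim}(A_0)$ and $s = \mathrm{Ksdim}_1(A)$, we then have $\dim(A_1^s) = r$. The crux is a structural description of $A_1^s$ as an $A_0$-module: I claim $A_1^s \subseteq \sum_{I \subseteq \underline{n},\, |I| \ge s} A_0\, y^I$. Granting this, one gets $\mathrm{Ann}_{A_0}(A_1^s) \supseteq \bigcap_{|I| \ge s} \mathrm{Ann}_{A_0}(y^I)$, and passing to the associated closed subsets of $\mathrm{Spec}(A_0)$ turns the intersection into a finite union, so that $r = \dim(A_1^s) \le \max_{|I| \ge s} \mathrm{Kdim}(A_0/\mathrm{Ann}_{A_0}(y^I)) \le r$. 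Hence some $I_0$ with $|I_0| \ge s$ satisfies $\mathrm{Kdim}(A_0/\mathrm{Ann}_{A_0}(y^{I_0})) = r$; since $A_0$ is integral over $B$ and $B$ is a domain, this forces $\mathrm{Ann}_B(y^{I_0}) = 0$ (as in the proof of Proposition \ref{NNT}), i.e. $I_0 \in \Gamma$, whence $k \ge |I_0| \ge s$. Together with the first inequality this yields $k = \mathrm{Ksdim}_1(A)$.

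It remains to justify the structural claim, which I expect to be the main obstacle. The starting point is $A_1 = \sum_i A_0\, y_i + A_1^3$, valid because the $y_i$ generate $A_1/A_1^3$ over $B \subseteq A_0$. Substituting this into a product of $s$ odd elements and using super-commutativity (so that $y_iy_j = -y_jy_i$ and $y_i^2 = 0$, whence every monomial with a repeated index vanishes), I would obtain $A_1^s = \sum_{|I| = s} A_0\, y^I + A_1^{s+2}$. Iterating the same step on $A_1^{s+2}$, and so on, the recursion terminates because $A_1$ is nilpotent: being a finitely generated $A_0$-module by Lemma \ref{Noetherian}, say on $m$ generators, one has $A_1^M = 0$ for $M > m$. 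This produces $A_1^s = \sum_{|I| \ge s,\ |I| \equiv s \pmod 2} A_0\, y^I$, in particular the inclusion used above. The delicate points here are the sign and repetition bookkeeping in expanding products of odd elements, and the use of nilpotency to close off the filtration $A_1 \supseteq A_1^3 \supseteq \cdots$; once these are handled, the two cited propositions do the rest of the work.
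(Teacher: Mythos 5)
Your argument is correct. The inequality $k\le\mathrm{Ksdim}_1(A)$ is obtained exactly as in the paper, by applying Proposition~\ref{NNT} (a)$\Leftrightarrow$(b) to a set $I\in\Gamma$ of maximal cardinality. For the reverse inequality the paper proceeds by contradiction: given a hypothetical system of odd parameters $z_1,\dots,z_{k+1}$, it expands each $z_j$ as a $B$-linear combination of the $y^I$ (using that these generate the $B$-module $I_A$), notes that every monomial $y^I$ occurring in the expansion of $z^{\underline{k+1}}$ has $|I|\ge k+1$, and concludes that the ideal $\prod_{|I|\ge k+1}J_I$ --- nonzero because $B$ is a domain and each such $J_I\ne0$ by maximality of $k$ --- annihilates $z^{\underline{k+1}}$, contradicting Proposition~\ref{NNT}(b). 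You instead route through Proposition~\ref{another definition of odd super-dimension}: from $\dim(A_1^s)=\mathrm{Kdim}(A_0)$ and the inclusion $A_1^s\subseteq\sum_{|I|\ge s}A_0\,y^I$ you extract, using that the Krull dimension of a quotient by a finite intersection of ideals is the maximum of the individual dimensions, a single $I_0$ with $|I_0|\ge s$ and $J_{I_0}=0$. Both proofs rest on the same two structural facts --- the $y^I$ with $|I|\ge l$ span $A_1^l$ over $B$, and $B$ is a domain --- but yours replaces the ``product of nonzero ideals in a domain is nonzero'' step by the intersection-versus-maximum argument already used in the proof of Proposition~\ref{another definition of odd super-dimension}. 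The price is that you must establish the filtration $A_1^s=\sum_{|I|=s}B\,y^I+A_1^{s+2}$ and invoke the nilpotency of $A_1$ explicitly, whereas the paper reads the needed spanning statement off the sentence preceding the lemma (the $y^I$ generate the $B$-module $I_A$); your sketch of that filtration, including the sign and repetition bookkeeping and the bound $A_1^{m+1}=0$, is sound.
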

\begin{proof}
Choose a set $I\in\Gamma$ of maximal cardinality $k$. Proposition \ref{NNT} (b) implies that the elements $y_i, i\in I$, form a system of odd parameters. Assume that there is a system of odd parameters of cardinality $k+1$, say $z_1, \ldots , z_{k+1}$. Then 
\[z_j=\sum_{I\subseteq\underline{n}} b^{(j)}_I y^I,b^{(j)}_I\in B, 1\leq j\leq k+1,\] 
and the product $z^{\underline{k+1}}$ is equal to
\[\sum_{I\subseteq\underline{n}}(\sum_{I_1\sqcup\ldots\sqcup I_{k+1}=I}\pm b^{(1)}_{I_1}\ldots b^{(k+1)}_{I_{k+1}})y^I .\]
Since the cardinality of each $I$ from the above sum is at least $k+1$, we have $0\neq \prod_{|I|\geq k+1}J_{I}\subseteq Ann_B(z^{\underline{k+1}})$, hence $z_1, \ldots , z_{k+1}$ do not form a system of odd parameters. This contradiction concludes the proof.
\end{proof}
It is well known that the Krull dimension of a factor-ring $A/I$ is at most the Krull dimension of a ring $A$. Surprisingly, the odd Krull dimension of a quotient of a super-ring $A$ can be greater than the odd Krull dimension of $A$. 

Recall that a polynomial $K$-superalgebra $K[X_1, \ldots, X_m\mid Y_1, \ldots, Y_n]$ is denoted, briefly, by $K[X\mid Y]$.
\begin{example}\label{strangeodd}
\emph{Let us consider a superalgebra $A=K[X\mid Y]/J$, where
\[J=\sum_{I\subseteq\underline{n}, I\cap\underline{l}\neq\emptyset}K[X]X_1 Y^I ,\]
and $n\geq l\geq 1, m\geq 1$. Then $B=K[X]$ is isomorphically mapped onto a subalgebra of $A$, over which $A_0$ is a finite module. Moreover, the residue classes of $Y_1, \ldots , Y_n$ form a system of generators of a $B$-module $I_A/I_A^2$.} 

\emph{Since $\mathrm{Ann}_B(Y^I)\neq 0$ if and only if $I\cap\underline{l}\neq\emptyset$, Lemma \ref{odd-Krull} implies that the residue classes of $Y_{l+1}, \ldots , Y_n$ form a system of odd parameters of the largest cardinality in $A$, i.e. $\mathrm{Ksdim}( A)=(m| n-l)$.}

\emph{On the other hand, the superalgebra 
\[C=K[X\mid Y]/K[X\mid Y]X_1\simeq K[X_2, \ldots , X_m\mid Y]\]
is a quotient of $A$. Moreover, $\mathrm{Ksdim}( C)=(m-1\mid n)$, hence $\mathrm{Ksdim}_1(C) > \mathrm{Ksdim}_1(A)$!}
\end{example}
\begin{lm}\label{whensuperdimensiondoesnotchange}
Let $A$ be a finitely generated $K$-superalgebra, and $S$ be a multiplicative subset of $A_0$. If $\mathrm{nil}(A)$ is a prime super-ideal, then $\mathrm{Ksdim}_1(A)=\mathrm{Ksdim}_1(S^{-1}A)$.
\end{lm}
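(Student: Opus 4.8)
The plan is to reduce the computation of $\mathrm{Ksdim}_1$, for both $A$ and $S^{-1}A$, to a localization at the generic point of $\mathrm{Spec}(A_0)$, and to exploit that the hypothesis makes this generic point unique and top-dimensional. First I would translate the hypothesis into a statement about $A_0$ alone. Since $A_1\subseteq I_A\subseteq\mathrm{nil}(A)$, the quotient $A/\mathrm{nil}(A)$ is purely even and coincides with $A_0/\mathrm{nil}(A_0)$; hence $\mathrm{nil}(A)$ is prime if and only if $\mathrm{nil}(A_0)$ is a prime ideal of $A_0$, i.e. $\mathrm{Spec}(A_0)$ is irreducible with a unique minimal prime $\mathfrak{n}=\mathrm{nil}(A_0)=\mathrm{nil}(A)\cap A_0$. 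Because $A_0/\mathfrak{n}=(A_0)_{red}$, we have $\dim(A_0/\mathfrak{n})=\mathrm{Kdim}(A_0)$, so $\mathfrak{n}$ is top-dimensional.

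Next I would prove the key reduction $(\star)$: \emph{for any Noetherian super-ring $R$ with $\mathrm{Kdim}(R_0)<\infty$ whose even part $R_0$ has a unique minimal prime $\mathfrak{n}$, one has}
\[
\mathrm{Ksdim}_1(R)=\max\{\, l : (R_1^l)_{\mathfrak{n}}\neq 0\,\}.
\]
Indeed, a system of odd parameters $y_1,\dots,y_l$ exists precisely when $\mathrm{Kdim}(R_0/\mathrm{Ann}_{R_0}(y^{\underline{l}}))=\mathrm{Kdim}(R_0)$, i.e. when some longest prime chain of $R_0$ has bottom prime $\mathfrak{p}_0\supseteq\mathrm{Ann}_{R_0}(y^{\underline{l}})$. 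The bottom of a longest chain is a minimal prime, hence equals $\mathfrak{n}$; and since $\mathfrak{n}$ is top-dimensional, a longest chain through $\mathfrak{n}$ does exist. Thus the condition becomes $\mathrm{Ann}_{R_0}(y^{\underline{l}})\subseteq\mathfrak{n}$, equivalently $y^{\underline{l}}/1\neq 0$ in $(R_1^l)_{\mathfrak{n}}$; maximizing over the choices of $y_i$ (using that $R_1^l$ is generated over $R_0$ by the products $y^I$, $|I|=l$, by Lemma \ref{Noetherian}) yields $(\star)$. This is the super-analogue of the statement that dimension is detected at the generic point, made precise via Proposition \ref{another definition of odd super-dimension}.

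Finally I would apply $(\star)$ to both $A$ and $S^{-1}A$. We may assume $S^{-1}A\neq 0$, equivalently $S\cap\mathfrak{n}=\emptyset$: if $S$ meets $\mathfrak{n}$ (the nilpotents of $A_0$) then $0\in S$ and $S^{-1}A=0$. Under $S\cap\mathfrak{n}=\emptyset$, the super-ring $S^{-1}A$ is again Noetherian with $\mathrm{Kdim}(S^{-1}A_0)<\infty$, and $S^{-1}A_0$ has the unique minimal prime $\mathfrak{n}'=S^{-1}\mathfrak{n}$ (every prime of $A_0$ contains $\mathfrak{n}$, and $\mathfrak{n}$ survives the localization). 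By $(\star)$ applied to $S^{-1}A$,
\[
\mathrm{Ksdim}_1(S^{-1}A)=\max\{\, l : ((S^{-1}A)_1^l)_{\mathfrak{n}'}\neq 0\,\}.
\]
Since $S\subseteq A_0\setminus\mathfrak{n}$, localizing $S^{-1}A$ at $\mathfrak{n}'$ inverts exactly $A_0\setminus\mathfrak{n}$, so $(S^{-1}A)_{\mathfrak{n}'}=A_{\mathfrak{n}}$ and $((S^{-1}A)_1^l)_{\mathfrak{n}'}=(A_1^l)_{\mathfrak{n}}$. Comparing with $(\star)$ for $A$ shows the two maxima coincide, whence $\mathrm{Ksdim}_1(A)=\mathrm{Ksdim}_1(S^{-1}A)$.

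The main obstacle is $(\star)$, that is, genuinely establishing that under irreducibility the odd Krull dimension is a local invariant at the unique minimal prime. The essential point — and the only place the hypothesis is used — is that the bottom of \emph{every} longest prime chain of $A_0$ is forced to be $\mathfrak{n}$, which is automatically top-dimensional. Without this, the statement fails: the stratum carrying the maximal odd length may lie over a proper component of $\mathrm{Spec}(A_0)$, and a suitable localization can delete that component and strictly decrease $\mathrm{Ksdim}_1$, so some input beyond the formal localization identities is indispensable.
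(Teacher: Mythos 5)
Your proposal is correct and follows essentially the same route as the paper: both arguments rest on the observation that, when $\mathrm{nil}(A_0)$ is the unique minimal prime of $A_0$, a set of odd elements forms a system of odd parameters exactly when the annihilator of their product lies in $\mathrm{nil}(A_0)$, a condition that commutes with localization at $S$. Your packaging via Proposition \ref{another definition of odd super-dimension} and the modules $(A_1^l)_{\mathfrak{n}}$ is a slightly more explicit rendering of the paper's one-line computation $\mathrm{Ann}_{S^{-1}A_0}(\frac{y_1}{s_1}\cdots\frac{y_t}{s_t})=S^{-1}\mathrm{Ann}_{A_0}(y_1\cdots y_t)$, but the underlying idea is identical.
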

\begin{proof}
Since $\mathrm{nil}(A_0)$ is a smallest nilpotent prime ideal of $A_0$, $S^{-1}\mathrm{nil}(A_0)$ is a smallest nilpotent prime ideal of $S^{-1}A_0$ as well. Therefore, the elements $\frac{y_1}{s_1}, \ldots, \frac{y_t}{s_t}\in S^{-1}A_1$ form a system of odd parameters if and only if $\mathrm{Ann}_{S^{-1}B_0}(\frac{y_1}{s_1}\ldots\frac{y_t}{s_t})\subseteq S^{-1}\mathrm{nil}(A_0)$. Using $\mathrm{Ann}_{S^{-1}A_0}(\frac{y_1}{s_1}\ldots\frac{y_t}{s_t})=S^{-1}\mathrm{Ann}_{A_0}(y_1\ldots y_t)$, one obtains the required equality. 
\end{proof}
\subsection{One relation superalgebras}

Let $A$ denote the polynomial superalgebra $K[X|Y]$ in $r$ even and $s$ odd free generators. For a nonzero homogeneous element 
\[f=\sum_{L\subseteq\underline{s}}c_L Y^L, c_L\in K[X],\]
let $B$ denote an \emph{one relation superalgebra} $A/Af$. 

Below we discuss the problem of calculating of
$\mathrm{Ksdim}_1(B)$. To simplify our calculations we suppose that $c_{\emptyset}=0$, that is $f\in I_A$.

Since $\overline{B}\simeq\overline{A}\simeq K[X]$, Lemma \ref{whensuperdimensiondoesnotchange} infers that 
$\mathrm{Ksdim}_1(B)=\mathrm{Ksdim}_1((K[X]\setminus 0)^{-1}B)$. In other words, without loss of generality one can assume that $A=K[Y]$.
Further, by Lemma \ref{odd-Krull} the odd Krull dimension of $B$ is equal to 
\[\max\{|L|\mid Y^L\not\in Af\}.\]

The set $\mathrm{Exp}(f)=\{L\mid c_L\neq 0\}$ is partially ordered by inclusion. Let $L_1, \ldots , L_t$ be all pairwise different minimal elements of $\mathrm{Exp}(f)$. The collection $L_1, \ldots, L_t$ is called a \emph{basement} of $f$. 

Observe that if $Y^L$ belongs to $Af$, then there is $1\leq i\leq t$, such that $L_i\subseteq L$. Therefore, if $L$ does not contain any $L_i$, then $Y^L\not\in Af$. 

Since $Y^{\underline{s}}$ is equal to $Y^{\underline{s}\setminus L_1}f$ (up to a nonzero scalar multiple), there always holds $\mathrm{Ksdim}_1(B)\leq s-1$.

A subset $K\subseteq\underline{s}$ of minimal cardinality, which meets each $L_i$, is said to be the \emph{extremal set} of $f$. If $K$ is an extremal set of $f$, then $k=|K|$ is called the \emph{index} of $f$, and it is denoted by $\mathrm{ind}(f)$. 
For example, $\mathrm{ind}(f)=1$ if and only $\cap_{1\leq i\leq t}L_i\neq\emptyset$. Furthermore, $\mathrm{ind}(f)\leq t$ and $\mathrm{ind}(f)=t$ if and only if  $L_i\cap L_j=\emptyset$ for any $1\leq i\neq j\leq t$.
\begin{lm}\label{lowerbound}
We have $\mathrm{Ksdim}_1(B)\geq s-\mathrm{ind}(f)$. 
\end{lm}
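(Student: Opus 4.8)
The plan is to reduce the statement to the purely combinatorial setup already prepared in the preceding paragraphs, and then to exhibit a \emph{single} odd monomial of the correct degree that survives in $B$. After the reduction to $A=K[Y]$, Lemma~\ref{odd-Krull} identifies
\[
\mathrm{Ksdim}_1(B)=\max\{|L|\mid Y^L\notin Af\},
\]
so, since we are proving a \emph{lower} bound, it suffices to produce one subset $L\subseteq\underline{s}$ with $|L|=s-\mathrm{ind}(f)$ and $Y^L\notin Af$. The whole proof therefore collapses to choosing $L$ cleverly and invoking the observation that a subset containing none of the $L_i$ yields a monomial outside $Af$.

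First I would fix an extremal set $K$ of $f$, so that $|K|=\mathrm{ind}(f)=:k$ and $K\cap L_i\neq\emptyset$ for every member $L_i$ of the basement of $f$. The natural candidate is then the complement $L:=\underline{s}\setminus K$, which has cardinality $s-k$. Next I would check that $L$ contains none of the $L_i$: if $L_i\subseteq L=\underline{s}\setminus K$ for some $i$, then $L_i\cap K=\emptyset$, contradicting the defining hitting property of the extremal set $K$. Hence $L_i\not\subseteq L$ for all $i$, and by the observation recorded just before the definition of the extremal set we get $Y^L\notin Af$. Consequently
\[
\mathrm{Ksdim}_1(B)\geq |L| = s-k = s-\mathrm{ind}(f),
\]
which is exactly the claim.

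The only piece of genuine content is the passage from a minimum hitting set $K$ of the family $\{L_1,\dots,L_t\}$ to the fact that its complement contains no $L_i$; this is a one-line duality and I do not anticipate a real obstacle. The point I would verify most carefully is that the cited observation is applied in the correct (contrapositive) direction — namely that $L$ \emph{avoiding} every $L_i$ forces $Y^L\notin Af$, and not the converse implication, which need not hold and which is not needed here.
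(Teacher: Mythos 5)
Your proposal is correct and follows exactly the paper's route: the paper's proof also takes an extremal set $K$ and observes that $Y^{\underline{s}\setminus K}\notin Af$, relying on the same earlier observation that a monomial $Y^L$ with $L$ containing no $L_i$ cannot lie in $Af$. You have merely made explicit the one-line duality (complement of a hitting set avoids every $L_i$) that the paper leaves as "one easily sees."
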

\begin{proof}
Let $K$ be an extremal set of $f$. One easily sees that $Y^{\underline{s}\setminus K}\not\in Af$. 
\end{proof}
Lemma \ref{lowerbound} implies that the worst lower bound for $\mathrm{Ksdim}_1(B)$ is $s-t$. The following example shows this estimate is achievable.
\begin{ex}\label{worstestimate}
\emph{Let $\mathrm{ind}(f)=t$ and each $L_i$ has cardinality at least $t$. Then $\mathrm{Ksdim}_1(B)=s-t$. In fact, if $|L|\geq s-t+1$, then $\underline{s}\setminus L$ is not extremal, whence $L_i\subseteq L$. Moreover, any $L_j, j\neq i,$ meets $L\setminus L_i$. Thus $\pm c_{L_i}Y^L=Y^{L\setminus L_i}f$.} 
\end{ex}
\begin{lm}\label{aform}
For any $1\leq i\leq t$ there is an element $g=Y^{L_i}+h$, such that the following conditions hold :
\begin{itemize}
\item[(a)] the minimal elements of $\mathrm{Exp}(h)$ are exactly $L_j, j\neq i$; 
\item[(b)] for any nonzero term $d_L Y^L$ of $h$, the exponent $L$ does not contain $L_i$;
\item[(c)] $Af=Ag$.
\end{itemize}
\end{lm}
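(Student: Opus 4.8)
The plan is to realize $g$ as $g = uf$ for a suitable even unit $u$ of $A$, so that condition (c) holds automatically; the point is to choose $u$ that both normalizes the coefficient of $Y^{L_i}$ to $1$ and erases every term of $f$ whose exponent properly contains $L_i$. Working (as reduced above) in $A = K[Y]$, recall that $A$ is a local superfield with maximal super-ideal $I_A$, and that an even element is a unit precisely when its constant term is nonzero; in particular the scalar $c_{L_i} \in K \setminus \{0\}$ is invertible. First I would split the support of $f$ according to its relation to $L_i$, writing
\[ f = c_{L_i} Y^{L_i} + \Sigma' + h_0, \qquad \Sigma' = \sum_{L \supsetneq L_i} c_L Y^L, \quad h_0 = \sum_{L_i \not\subseteq L} c_L Y^L. \]
Every monomial occurring in $\Sigma'$ factors (up to sign) as $Y^{L_i} Y^{L \setminus L_i}$ with $L \setminus L_i \neq \emptyset$, and since $f$ is homogeneous the cardinality $|L \setminus L_i|$ is even; hence $\Sigma' = Y^{L_i} q_0$ for an even element $q_0 \in I_A$, and therefore $f = Y^{L_i}(c_{L_i} + q_0) + h_0$.

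Next I would set $v = (c_{L_i} + q_0)^{-1}$, an even unit, and take $g = v f$. As even elements are central, $v\, Y^{L_i}(c_{L_i} + q_0) = Y^{L_i}$, so $g = Y^{L_i} + h$ with $h = v h_0$; this gives both the required shape of $g$ and condition (c), since $Ag = Af$. For (b), the key observation is that $q_0$, and hence $v$, lies in the subalgebra $\wedge_K(Y_j : j \notin L_i)$, because every exponent $L \setminus L_i$ appearing in $q_0$ is disjoint from $L_i$. Consequently each monomial of $h = v h_0$ has an exponent of the form $J \cup L$ with $J \cap L_i = \emptyset$ and $L_i \not\subseteq L$, so that $L_i \cap (J \cup L) = L_i \cap L \subsetneq L_i$ and $L_i \not\subseteq J \cup L$, which is exactly (b).

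The main obstacle is condition (a), since multiplying $h_0$ by the unit $v$ may simultaneously create new monomials and cancel existing ones. I would first record the easy fact that the minimal elements of $\mathrm{Exp}(h_0) = \{ L \in \mathrm{Exp}(f) \mid L_i \not\subseteq L\}$ are exactly the $L_j$ with $j \neq i$: each such $L_j$ lies in $\mathrm{Exp}(h_0)$ because distinct basement elements are incomparable, and every element of $\mathrm{Exp}(h_0)$ dominates some basement element, which cannot be $L_i$. The heart of the argument is then to transfer this to $h$. Writing $v = c_{L_i}^{-1} + v'$ with $v' \in I_A$, every monomial of the correction $v' h_0$ has an exponent strictly containing an element of $\mathrm{Exp}(h_0)$; therefore no exponent smaller than a given $L_j$ can appear in $h$, and the coefficient of $Y^{L_j}$ cannot be disturbed by $v' h_0$ (that would require an element of $\mathrm{Exp}(h_0)$ strictly below $L_j$, contradicting minimality), so it remains the nonzero scalar $c_{L_i}^{-1} c_{L_j}$. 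A final incomparability argument shows that any minimal element of $\mathrm{Exp}(h)$ must itself be one of the $L_j$, yielding (a). All the steps other than this cancellation-and-creation bookkeeping are routine computations inside the local superfield $A$.
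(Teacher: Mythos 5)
Your proof is correct and follows essentially the same route as the paper's: both factor $f$ as $Y^{L_i}p + h_0$ with $p = c_{L_i} + q_0$ an even unit supported away from $L_i$, and take $g = p^{-1}f$. The only difference is that you carefully verify the bookkeeping for condition (a) — that multiplication by the unit $p^{-1}$ neither destroys the coefficients of the $Y^{L_j}$ nor creates exponents below them — a step the paper dismisses as obvious.
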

\begin{proof}
The polynomial $f$ can be represented as $f=Y^{L_i} p + h'$, where $p$ is an invertible element of $A$, such that the exponent of any its nonzero term does not meet $L_i$. Furthermore, the minimal elements of $\mathrm{Exp}(h')$ are exactly $L_j, j\neq i$, and the exponent of any its nonzero term does not contain $L_i$. It is now obvious that $g=p^{-1}f$ is the required polynomial.
\end{proof}
An element $g$ as in the above lemma is called a \emph{form of $f$ reduced in $L_i$}.
\begin{cor}\label{ifonesetissingleton}
If some $L_i$ is a singleton, then $\mathrm{Ksdim}_1(B)=s-1$.
\end{cor}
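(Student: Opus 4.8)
The plan is to establish the lower bound $\mathrm{Ksdim}_1(B)\ge s-1$; since it was already observed above that $\mathrm{Ksdim}_1(B)\le s-1$ for every $f\in I_A$, this yields the desired equality. I want to stress at the outset that Lemma~\ref{lowerbound} is \emph{not} enough here: a singleton $L_i=\{a\}$ forces $a\notin L_j$ for every other basement element $L_j$ (otherwise $L_i\subsetneq L_j$ would contradict the minimality of $L_j$), so $\cap_i L_i=\emptyset$ as soon as $t\ge 2$, and then $\mathrm{ind}(f)>1$. Hence the index bound $s-\mathrm{ind}(f)$ is strictly smaller than $s-1$ in general, and the singleton has to be used in a more structural way.

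First I would fix an index $i$ with $L_i=\{a\}$ and invoke Lemma~\ref{aform} to replace $f$ by a reduced form $g=Y_a+h$ with $Ag=Af$, where by condition (b) no monomial of $h$ contains $a$; thus $h$ lies in the sub-superalgebra $A'=K[Y_j\mid j\neq a]$ generated by the remaining variables. Because $f$ is homogeneous and carries the odd monomial $Y^{L_i}$ with $|L_i|=1$, it is odd, and so is $g$; hence $h=g-Y_a\in A_1$, and super-commutativity gives $h^2=0$ together with $Y_ah+hY_a=0$. Consequently $g^2=Y_a^2+(Y_ah+hY_a)+h^2=0$, and $g$ anticommutes with each $Y_j$, $j\neq a$. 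In other words $g$ is a legitimate free odd coordinate replacing $Y_a$.

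I would then package this as an automorphism $\sigma$ of $A$ defined by $\sigma(Y_a)=Y_a+h=g$ and $\sigma(Y_j)=Y_j$ for $j\neq a$: it is a well-defined superalgebra endomorphism by the relations just checked, and it is invertible with inverse $Y_a\mapsto Y_a-h$, since $h$ is fixed by $\sigma$. Hence $Af=Ag=A\sigma(Y_a)=\sigma(AY_a)$, and applying $\sigma^{-1}$ produces an isomorphism
\[ B=A/Af \;\cong\; A/AY_a \;=\; K[Y_j\mid j\neq a], \]
the polynomial (exterior) superalgebra in the $s-1$ odd indeterminates $Y_j$, $j\neq a$. Under this isomorphism $Y^{\underline{s}\setminus\{a\}}=\prod_{j\neq a}Y_j$ is sent to the nonzero top monomial, so $Y^{\underline{s}\setminus\{a\}}\notin Af$; as $|\underline{s}\setminus\{a\}|=s-1$, the description $\mathrm{Ksdim}_1(B)=\max\{|L|\mid Y^L\notin Af\}$ gives $\mathrm{Ksdim}_1(B)\ge s-1$, and equality follows. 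The only real obstacle is the structural claim of the second paragraph, namely that $g=Y_a+h$ with $h$ independent of $Y_a$ defines an honest change of odd coordinates; this is precisely the place where the hypothesis $|L_i|=1$ enters, guaranteeing that the triangular substitution $\sigma$ is invertible.
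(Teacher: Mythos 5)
Your proof is correct and follows essentially the same route as the paper's: both hinge on Lemma~\ref{aform} and on the observation that the triangular substitution $Y_a\mapsto g=Y_a+h$, $Y_j\mapsto Y_j$ is an automorphism of $A$. The only difference is the finish — the paper deduces that $g$ is odd regular and invokes Lemma~\ref{whensuper-dimensionisdecreasing}(b), whereas you identify $B\cong A/AY_a$ explicitly and read off the nonvanishing of the top monomial $Y^{\underline{s}\setminus\{a\}}$; both are valid, and your preliminary remark that Lemma~\ref{lowerbound} alone cannot give the bound is a worthwhile observation.
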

\begin{proof}
Let $g=Y_j +h$ be a form of $f$ reduced in $L_i=\{j\}$. The map 
\[Y_j\mapsto g, Y_k\mapsto Y_k, k\neq j,\]
induces an automorphism of superalgebra $A$, which takes the odd regular element $Y_j$ to the odd regular element $g$. Lemma \ref{aform}(c) and Lemma \ref{whensuper-dimensionisdecreasing}(b) imply
$\mathrm{Ksdim}_1(B)=s-1$.
\end{proof}
The following proposition shows that the lower bound in Lemma \ref{lowerbound} is not always sharp.
\begin{pr}\label{t=2}
If $t=2$, then $\mathrm{Ksdim}_1(B)=s-2$ if and only if $L_1\cap L_2=\emptyset$ and both $|L_1|$ and $|L_2|$ are at least $2$, otherwise $\mathrm{Ksdim}_1(B)=s-1$.
\end{pr}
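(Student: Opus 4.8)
The plan is to treat the statement as a trichotomy governed by the two invariants $L_1\cap L_2$ and the cardinalities $|L_1|,|L_2|$, and to reduce each branch to a result already established. First I would record the a priori bounds valid for every $f$ with $t=2$: since $\mathrm{ind}(f)\le t=2$, Lemma \ref{lowerbound} gives $\mathrm{Ksdim}_1(B)\ge s-2$, while the bound $\mathrm{Ksdim}_1(B)\le s-1$ noted just before Lemma \ref{lowerbound} always holds. Hence $\mathrm{Ksdim}_1(B)\in\{s-1,\,s-2\}$ in all cases, and it only remains to decide, for each configuration of the basement $\{L_1,L_2\}$, which of these two values occurs.

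Next I would split into the three mutually exclusive configurations. If $L_1\cap L_2\neq\emptyset$, then by the characterization of the index recalled before Lemma \ref{lowerbound} we have $\mathrm{ind}(f)=1$, so that lemma forces $\mathrm{Ksdim}_1(B)\ge s-1$, and the upper bound yields equality $s-1$. If instead $L_1\cap L_2=\emptyset$ but one of the $L_i$ is a singleton, then Corollary \ref{ifonesetissingleton} applies directly and gives $\mathrm{Ksdim}_1(B)=s-1$. Finally, if $L_1\cap L_2=\emptyset$ and $|L_1|,|L_2|\ge 2$, then the hypotheses of Example \ref{worstestimate} hold with $t=2$ (namely $\mathrm{ind}(f)=t$ and each $|L_i|\ge t$), whence $\mathrm{Ksdim}_1(B)=s-t=s-2$. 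Since the first two branches are exactly the negation of the stated condition and both produce $s-1$, while the third branch produces $s-2$, this establishes the asserted equivalence in both directions.

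The only point requiring genuine verification is the upper bound $\mathrm{Ksdim}_1(B)\le s-2$ in the third branch, i.e. that every $Y^L$ with $|L|\ge s-1$ lies in $Af$; this is precisely the content of Example \ref{worstestimate}, whose argument I would reuse. For such $L$ the complement $\underline{s}\setminus L$ has at most one element, hence cannot be extremal, so $L_i\subseteq L$ for some $i$; and since the other set $L_j$ ($j\ne i$) has at least two elements it cannot lie inside the one-element set $\underline{s}\setminus L$, so it meets $L\setminus L_i$. Consequently all terms of $Y^{L\setminus L_i}f$ except the one coming from $L_i$ vanish through the relation $Y_j^2=0$, giving $Y^{L\setminus L_i}f=\pm c_{L_i}Y^L$ with $c_{L_i}$ invertible (here one uses the reduction to $A=K[Y]$, where the $c_L$ become nonzero scalars). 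I expect no serious obstacle beyond the bookkeeping of which terms vanish; all the structural input is supplied by Lemma \ref{lowerbound}, Corollary \ref{ifonesetissingleton}, and Example \ref{worstestimate}, so the proof is essentially an assembly of these three into the case split.
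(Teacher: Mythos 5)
Your proposal is correct and follows essentially the same route as the paper: the a priori bounds from Lemma \ref{lowerbound}, the singleton case via Corollary \ref{ifonesetissingleton}, and the computation underlying Example \ref{worstestimate} for the case $L_1\cap L_2=\emptyset$ with $|L_1|,|L_2|\ge 2$. The only slip is the claim that $Y^{L\setminus L_i}f=\pm c_{L_i}Y^{L}$: when $\underline{s}\setminus L=\{i\}$ the exponent $M=L_i\sqcup\{i\}$ also survives the multiplication, contributing a term $\pm c_{L_i\sqcup\{i\}}Y^{\underline{s}}$; since $Y^{\underline{s}}\in Af$ this is harmless, and the paper's own proof simply carries this extra term along explicitly.
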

\begin{proof}
By Lemma \ref{lowerbound} we have $\mathrm{Ksdim}_1(B)\geq s-2$. Moreover, if $\mathrm{Ksdim}_1(B)=s-2$, then $L_1\cap L_2=\emptyset$. Besides, Corollary \ref{ifonesetissingleton} infers $|L_1|, |L_2|\geq 2$.

Conversely, assume that $L_1\cap L_2=\emptyset$ and both $L_1$ and $L_2$ have cardinalities at least two. Then any $1\leq i\leq s$ does not belong either to $L_1$, or to $L_2$. In both cases $Y^{\underline{s}\setminus i}\in Af$. For example, if $i\not\in L_1$, then $(\underline{s}\setminus (L_1\sqcup i))\cap L_2\neq\emptyset$ and therefore, $Y^{\underline{s}\setminus (L_1\sqcup i)} f$ is equal to
\[\pm c_{L_1}Y^{\underline{s}\setminus i}\pm c_{L_1\sqcup i}Y^{\underline{s}}.\]
Since $Y^{\underline{s}}$ belongs to $Af$, so $Y^{\underline{s}\setminus i}$ does.
\end{proof}
We do not know whether $\mathrm{Ksdim}_1(B)$ is completely defined by the basement of $f$ for any $t\geq 3$, similarly to the above proposition. 

\section{Regular super-rings}

From now on all super-rings are supposed to be Noetherian, unless otherwise stated.

Let $A$ be a local super-ring with a maximal super-ideal $\mathfrak{m}$. Let $K$ denote its residue
field $A/\mathfrak{m}$. Observe also that $\mathrm{Kdim}(A_0)=r<\infty$ (cf. \cite[Corollary 11.11]{AM}). Set $\mathrm{Ksdim}(A)=r|s$.

Note that
\begin{equation}\label{eq:m/m^2}
\mathfrak{m}/\mathfrak{m}^2= 
(\mathfrak{m}_0/\mathfrak{m}_0^2+A_1^2)\oplus (A_1/\mathfrak{m}_0A_1).
\end{equation}
This is a super-vector space over $K$, which is finite-dimensional since $\mathfrak{m}$ is finitely
generated as an $A_0$-module. 

\begin{lm}\label{lem:odd_generators}
If $y_1,\dots, y_s$ form a system of odd parameters consisting of $s$ elements, we have
\[ s \le \dim_K((\mathfrak{m}/\mathfrak{m}^2)_1). \]
\end{lm}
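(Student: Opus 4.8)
The plan is to compare the length $s$ with the minimal number of $A_0$-generators of $A_1$, and to exploit the fact that a system of odd parameters has a nonzero top product. By the decomposition \eqref{eq:m/m^2} the odd part of $\mathfrak{m}/\mathfrak{m}^2$ is $(\mathfrak{m}/\mathfrak{m}^2)_1 = A_1/\mathfrak{m}_0 A_1$; since $\mathfrak{m}=\mathfrak{m}_0\oplus A_1$ we have $K = A_0/\mathfrak{m}_0$, and as $A_1$ is a finitely generated $A_0$-module (Lemma \ref{Noetherian}), Nakayama's lemma identifies $d := \dim_K((\mathfrak{m}/\mathfrak{m}^2)_1)$ with the least number of generators of $A_1$ as an $A_0$-module. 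Thus it suffices to prove $s \le d$. I would fix a minimal generating set $e_1, \ldots, e_d$ of $A_1$ at the outset.

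First I would record that the odd powers vanish beyond $d$, namely $A_1^{\,l} = 0$ for every $l > d$. Indeed, $A_1 = \sum_{i=1}^d A_0 e_i$, so $A_1^{\,l}$ is spanned over $A_0$ by the products $e_{i_1}\cdots e_{i_l}$; super-commutativity gives $e_i^2 = 0$ and, applying condition (ii) to $e_i+e_j$, the sign rule $e_i e_j = -e_j e_i$. Hence such a product vanishes as soon as two of its indices coincide. When $l > d$ every product of $l$ of the $e_i$ must repeat an index and is therefore $0$, so $A_1^{\,l} = 0$.

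Next I would use the definition of odd parameters to guarantee $y^{\underline{s}} \neq 0$. By definition $y_1, \ldots, y_s$ form a system of odd parameters precisely when $\mathrm{Kdim}(A_0) = \mathrm{Kdim}(A_0/\mathrm{Ann}_{A_0}(y^{\underline{s}}))$. Since $A \neq 0$ we have $r = \mathrm{Kdim}(A_0) \ge 0$, so the quotient ring $A_0/\mathrm{Ann}_{A_0}(y^{\underline{s}})$ is nonzero; therefore $\mathrm{Ann}_{A_0}(y^{\underline{s}}) \neq A_0$, and consequently $y^{\underline{s}} = y_1 \cdots y_s \neq 0$.

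Finally I would combine the two observations: $y^{\underline{s}}$ is a product of $s$ odd elements, so it lies in $A_1^{\,s}$; as $y^{\underline{s}} \neq 0$ we get $A_1^{\,s} \neq 0$, and the vanishing $A_1^{\,l} = 0$ for $l > d$ forces $s \le d = \dim_K((\mathfrak{m}/\mathfrak{m}^2)_1)$. The argument is essentially routine, and the only step that demands a little care is the expansion showing $A_1^{\,l} = 0$ for $l > d$, where one must invoke both $e_i^2 = 0$ and the anticommutation $e_i e_j = -e_j e_i$ so that a monomial with a repeated index is killed; everything else is bookkeeping.
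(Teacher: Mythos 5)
Your proof is correct and follows essentially the same route as the paper: both identify $\dim_K((\mathfrak{m}/\mathfrak{m}^2)_1)$ with the minimal number of $A_0$-generators of $A_1$ via Nakayama, and both conclude that if $s$ exceeded that number the top product $y^{\underline{s}}$ would vanish (killed by $e_i^2=0$ and anticommutation), contradicting the definition of a system of odd parameters. Your intermediate packaging of the argument as the vanishing $A_1^{\,l}=0$ for $l>d$ is only a cosmetic difference from the paper's direct expansion of $y^{\underline{s}}$.
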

\begin{proof}
Suppose $t= \dim_K((\mathfrak{m}/\mathfrak{m}^2)_1)$. Let $z_1,\dots, z_t$ be elements
of $A_1$ which give rise to a $K$-basis in $(\mathfrak{m}/\mathfrak{m}^2)_1=A_1/\mathfrak{m}_0A_1$. 
By Nakayama's Lemma this is equivalent to saying that $z_1,\dots, z_t$ form a minimal system of
generators of the $A_0$-module $A_1$. Therefore, each $y_i$ is
an $A_0$-linear combination of them. If $s > t$, it follows that $y^{\underline{s}}=0$, whence 
$y_1,\dots, y_s$ cannot form a system of odd parameters. 
\end{proof}
Lemma \ref{lem:odd_generators} shows 
\[ \mathrm{Ksdim}(A) \le \mathrm{sdim}_K(\mathfrak{m}/\mathfrak{m}^2), \]
or namely, $r \le \mathrm{dim}_K((\mathfrak{m}/\mathfrak{m}^2)_0)$, 
$s \le \mathrm{dim}_K((\mathfrak{m}/\mathfrak{m}^2)_1)$.
We say that $A$ is \emph{regular} if $\mathrm{Ksdim}(A) = \mathrm{sdim}_K(\mathfrak{m}/\mathfrak{m}^2)$.

\begin{pr}\label{prop:Ksdim_regular}
For a local super-ring $(A, \mathfrak{m})$, the following are equivalent:
\begin{itemize}
\item[(a)] $A$ is regular;
\item[(b)] 
\begin{itemize}
\item[(i)] $\overline{A}$ is regular, and
\item[(ii)] for some/any minimal system $z_1,\dots, z_s $ of generators of the $A_0$-module $A_1$, we have 
$\mathrm{Ann}_{A_0}(z^{\underline{s}}) = A_1^2$.
\end{itemize} 
\end{itemize} 
\end{pr}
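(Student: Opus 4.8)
The plan is to split the defining equality $\mathrm{Ksdim}(A)=\mathrm{sdim}_K(\mathfrak{m}/\mathfrak{m}^2)$ into its even and odd halves by means of the decomposition \eqref{eq:m/m^2}, to match the even half with (b)(i) and the odd half with (b)(ii), and finally to glue the two using the regularity of $\overline{A}$. Recall that regularity of $A$ means, by Lemma \ref{lem:odd_generators}, precisely that both $r=\dim_K((\mathfrak{m}/\mathfrak{m}^2)_0)$ and $s=\dim_K((\mathfrak{m}/\mathfrak{m}^2)_1)$ hold.

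For the even half I would first note that $I_A=A_1\oplus A_1^2$, so that $\overline{A}=A_0/A_1^2$ is purely even with maximal ideal $\overline{\mathfrak{m}}=\mathfrak{m}_0/A_1^2$ and residue field $K$. Since $A_1^2\subseteq\mathrm{nil}(A_0)$, passing to this quotient does not change the Krull dimension, so $\mathrm{Kdim}(\overline{A})=r$; moreover $\overline{\mathfrak{m}}/\overline{\mathfrak{m}}^2=\mathfrak{m}_0/(\mathfrak{m}_0^2+A_1^2)=(\mathfrak{m}/\mathfrak{m}^2)_0$. Hence $\overline{A}$ is regular in the ordinary commutative sense if and only if $r=\dim_K((\mathfrak{m}/\mathfrak{m}^2)_0)$, which is exactly the even half of regularity of $A$. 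This already gives the equivalence of (b)(i) with the even half of (a).

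For the odd half I would set $t=\dim_K((\mathfrak{m}/\mathfrak{m}^2)_1)=\dim_K(A_1/\mathfrak{m}_0A_1)$; by Nakayama this is the length of a minimal generating system $z_1,\dots,z_t$ of the $A_0$-module $A_1$ (the system meant in (b)(ii)), and Lemma \ref{lem:odd_generators} gives $s\le t$. Taking the $z_i$ as generators, one has $A_1^t=A_0\,z^{\underline{t}}$ and $A_1^{l}=0$ for $l>t$; together with the inclusion $\mathrm{Ann}_{A_0}(A_1^l)\subseteq\mathrm{Ann}_{A_0}(A_1^{l+1})$ and Proposition \ref{another definition of odd super-dimension}, this shows that the odd half of regularity, namely $s=t$, is equivalent to $\mathrm{Kdim}(A_0/\mathrm{Ann}_{A_0}(z^{\underline{t}}))=r$. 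I would also record the always-valid inclusion $A_1^2\subseteq\mathrm{Ann}_{A_0}(z^{\underline{t}})$, immediate because any extra odd factor repeats one of the $z_i$, and observe that $z^{\underline{t}}$ changes only by a unit of $A_0$ when the minimal system is replaced by another (the change-of-basis matrix lies in $\mathrm{GL}_t(A_0)$ and the odd product is alternating), which justifies the ``some/any'' in (b)(ii).

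The gluing step is where regularity of $\overline{A}$ is essential, and I expect it to be the main obstacle: the plain equivalence $s=t\iff\mathrm{Ann}_{A_0}(z^{\underline{t}})=A_1^2$ is \emph{false} without it, so (b)(i) must be fed in before (b)(ii) can be extracted. Assuming (b)(i), $\overline{A}$ is a regular local ring, hence a domain, so $A_1^2=\mathrm{nil}(A_0)$ is the unique minimal prime of $A_0$ and the only prime $\mathfrak{p}$ with $\mathrm{Kdim}(A_0/\mathfrak{p})=r$ (any prime properly above it has smaller coheight, by $\mathrm{ht}(\mathfrak{p})+\mathrm{Kdim}(A_0/\mathfrak{p})\le r$ and $\mathrm{ht}\ge 1$). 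Therefore, for the ideal $J=\mathrm{Ann}_{A_0}(z^{\underline{t}})\supseteq A_1^2$ one gets $\mathrm{Kdim}(A_0/J)=r\iff J\subseteq A_1^2\iff J=A_1^2$. Combining the halves finishes the proof: for (a)$\Rightarrow$(b), regularity of $A$ yields the even half, hence (b)(i), whose domain structure then turns the odd half $s=t$ into (b)(ii); for (b)$\Rightarrow$(a), (b)(i) supplies the even half together with the domain structure, and then (b)(ii) forces $\mathrm{Kdim}(A_0/J)=r$, i.e. $s=t$, which is the odd half. Hence (a)$\iff$(b).
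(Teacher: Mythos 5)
Your proof is correct and follows essentially the same route as the paper's: the even half is matched with regularity of $\overline{A}$ via $(\mathfrak{m}/\mathfrak{m}^2)_0=\overline{\mathfrak{m}}/\overline{\mathfrak{m}}^2$ and $\mathrm{Kdim}(A_0)=\mathrm{Kdim}(\overline{A})$, and the odd half hinges, exactly as in the paper, on the fact that $\overline{A}$ regular implies $\overline{A}$ is a domain, so that $A_1^2$ is the unique prime of coheight $r$ and hence $\mathrm{Kdim}(A_0/\mathrm{Ann}_{A_0}(z^{\underline{t}}))=r$ is equivalent to $\mathrm{Ann}_{A_0}(z^{\underline{t}})=A_1^2$. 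The only (harmless) organizational differences are that you route the odd half through Proposition \ref{another definition of odd super-dimension} and justify ``some/any'' by the determinant of the change-of-basis matrix in $\mathrm{GL}_t(A_0)$, where the paper instead compares an odd-parameter system $y$ with a minimal generating system $z$ via $y^{\underline{s}}=az^{\underline{s}}$.
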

\begin{proof}
(a) $\Rightarrow$ (b).\ Assume (a). Since 
$(\mathfrak{m}/\mathfrak{m}^2)_0 = \overline{\mathfrak{m}}/\overline{\mathfrak{m}}^2$ and
$\mathrm{Kdim}(A_0)= \mathrm{Kdim}(\overline{A})$,
we have (i) of (b). 
Let $s= \dim_K((\mathfrak{m}/\mathfrak{m}^2)_1)$. Then we have a system $y_1,\dots, y_s$ of
odd parameters. Since $\overline{A} =A_0/A_1^2$ is an integral domain (see \cite[Theorem 48]{mats}), arguing as in the proof of Proposition \ref{NNT} we obtain
\[ \mathrm{Ann}_{A_0}(y^{\underline{s}}) \subset  A_1^2. \]
Given an arbitrary minimal system of generators as in (ii), 
the presentation of each $y_i$ as an $A_0$-linear combination of 
$z_1,\dots, z_s$ gives $y^{\underline{s}}=a z^{\underline{s}}$ for some $a \in A_0$, and so
\[ \mathrm{Ann}_{A_0}(z^{\underline{s}}) \subset \mathrm{Ann}_{A_0}(y^{\underline{s}}). \]
Since one sees $A_1 z^{\underline{s}}=0$, and so
\[ A_1^2 \subset \mathrm{Ann}_{A_0}(z^{\underline{s}}), \]
it follows that $\mathrm{Ann}_{A_0}(z^{\underline{s}}) = A_1^2$.

(b) $\Rightarrow$ (a).\ This is now easy. Note that any minimal system of generators as in (ii)
of (b) form a system of odd parameters. 
\end{proof}

\subsection{Regular local super-rings}\label{sec:regular_locall}

Let $A$ be a (not necessary local) Noetherian super-ring. Recall that $I_A= AA_1$.
This $I_A$ is nilpotent since $A_1$ is now finitely generated as an $A_0$-module. 
Given a positive integer $n$, we have
\[ I_A^n= \begin{cases} A_1^{n+1} \oplus A_1^n & n~~\text{odd},\\
A_1^n \oplus A_1^{n+1} & n~~\text{even}, \end{cases} \]
whence 
$I_A^n/I_A^{n+1}= A_1^n/A_1^{n+2}$; this is an $\overline{A}$-module, which is finitely
generated since $A_1^n$ is finitely generated as an $A_0$-module. 
We define
\[ \mathsf{gr}_{I_A}(A) := \bigoplus_{n\ge 0} I_A^n/I_A^{n+1} =\overline{A} \oplus I_A/I_A^2 \oplus \dots. \]
This is a graded superalgebra over $\overline{A}$. We let
\[ \lambda_A : \wedge_{\overline{A}}(I_A/I_A^2) \to \mathsf{gr}_{I_A}(A) \]
denote the graded $\overline{A}$-superalgebra map induced by the embedding $I_A/I_A^2 \to \mathsf{gr}_{I_A}(A)$.
One sees that this is a surjection. If the $A_0$-module $A_1$ is generated by $s$ elements, then $A_1^{s+1}=0$,
whence $\wedge_{\overline{A}}^n(I_A/I_A^2) = 0= \mathsf{gr}_{I_A}(A)(n)$ for all $n > s$. 

In the remaining of this subsection we suppose that $(A, \mathfrak{m})$ is a local super-ring with 
residue field $K=A/\mathfrak{m}$. 
We let $\overline{\mathfrak{m}}=\mathfrak{m}_0/A_1^2$ denote the maximal ideal
of $\overline{A}=A_0/A_1^2$. 

\begin{pr}\label{prop:regular_local}
For a local super-ring $A$, the following are equivalent:
\begin{itemize}
\item[(a)] $A$ is regular;
\item[(b)]
\begin{itemize} \item[(i)] The ring $\overline{A}$ is regular, 
\item[(ii)] the $\overline{A}$-module $I_A/I_A^2$ is free (of rank $\mathrm{Ksdim}_1(A)$), and 
\item[(iii)] $\lambda_A$ is an isomorphism.
\end{itemize}
\end{itemize}
\end{pr}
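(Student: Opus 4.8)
The plan is to route everything through Proposition \ref{prop:Ksdim_regular}, which already gives that $A$ is regular if and only if $\overline{A}$ is regular and $\mathrm{Ann}_{A_0}(z^{\underline{s}}) = A_1^2$ for a minimal system of generators $z_1,\dots,z_s$ of the $A_0$-module $A_1$ (so $s = \dim_K((\mathfrak{m}/\mathfrak{m}^2)_1)$ by Nakayama, and $\mathrm{Ksdim}_1(A)\le s$ by Lemma \ref{lem:odd_generators}, with equality precisely when $A$ is regular). Since (b)(i) is literally ``$\overline{A}$ is regular'', the proposition reduces, for such a fixed system, to the purely odd equivalence
\[ \mathrm{Ann}_{A_0}(z^{\underline{s}}) = A_1^2 \quad\Longleftrightarrow\quad I_A/I_A^2 \text{ is free over } \overline{A} \text{ and } \lambda_A \text{ is an isomorphism}. \]
I would first record the elementary identifications. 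Since $I_A = A_1^2\oplus A_1$ and $I_A^2 = A_1^2\oplus A_1^3$, the module $V:=I_A/I_A^2$ is the purely odd $\overline{A}$-module $A_1/A_1^3$, generated by $\bar{z}_1,\dots,\bar{z}_s$; moreover $\mathsf{gr}_{I_A}(A)(k)=A_1^k/A_1^{k+2}$, and $\lambda_A$ sends $\bar{z}_{j_1}\wedge\dots\wedge\bar{z}_{j_k}$ to $z_{j_1}\cdots z_{j_k}\bmod A_1^{k+2}$. Because $A_1$ has $s$ generators, any product of $s+1$ of the $z_i$ repeats a factor and vanishes, so $A_1^{s+1}=0$; in particular the top piece is $A_1^s/A_1^{s+2}=A_1^s$.

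For (b) $\Rightarrow$ (a), suppose $V$ is free over the local ring $\overline{A}$ and $\lambda_A$ is an isomorphism. Then the rank of $V$ equals its minimal number of generators $s$, so $\wedge_{\overline{A}}^{s}V\cong\overline{A}$ is free of rank one, and the top-degree component of $\lambda_A$ gives $A_1^s = A_1^s/A_1^{s+2}\cong\overline{A}$, freely generated by $z^{\underline{s}}$. Reading off the annihilator of a free rank-one module yields $\mathrm{Ann}_{A_0}(z^{\underline{s}})=A_1^2$; with (b)(i), Proposition \ref{prop:Ksdim_regular} gives regularity, and then $\mathrm{Ksdim}_1(A)=s$, so the rank matches (b)(ii).

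The substance is the direction (a) $\Rightarrow$ (b), i.e. producing freeness of $V$ and bijectivity of $\lambda_A$ out of the single equality $\mathrm{Ann}_{A_0}(z^{\underline{s}})=A_1^2$. The engine is a ``multiply by the complementary product'' trick: given a lift of any $\overline{A}$-combination $\sum_{|J|=k}c_J\bar{z}_J$ that maps into $A_1^{k+2}$, I would multiply by $z^{\underline{s}\setminus J_0}\in A_1^{s-k}$ for a fixed $J_0$ with $|J_0|=k$; every $J\ne J_0$ term acquires a repeated $z_i$ and dies, while the product lies in $A_1^{k+2}\cdot A_1^{s-k}=A_1^{s+2}=0$, so $c_{J_0}z^{\underline{s}}=0$, i.e. $c_{J_0}\in\mathrm{Ann}_{A_0}(z^{\underline{s}})=A_1^2$ and hence $\bar{c}_{J_0}=0$. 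With $k=1$, applied to a relation $\sum_i a_i\bar{z}_i=0$ in $V$, this shows the $\bar{z}_i$ are $\overline{A}$-independent, so $V$ is free of rank $s$ (equal to $\mathrm{Ksdim}_1(A)$ by regularity); applied in each degree $k$ to a kernel element of $\lambda_A$ — a combination of the basis monomials $\bar{z}_J$ of the now-free module $\wedge_{\overline{A}}^{k}V$ — it shows $\lambda_A$ is injective, hence an isomorphism, as it is already surjective.

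I expect no genuine obstacle beyond bookkeeping: the well-definedness of the identifications modulo the powers $A_1^{k+2}$, the harmless sign ambiguity of the products $z^J$ (only their vanishing is tested), and keeping straight that the combinatorial core uses no hypothesis on $\overline{A}$ — regularity of $\overline{A}$ enters solely through Proposition \ref{prop:Ksdim_regular} to translate the annihilator condition into ``$A$ is regular''. The one point to state carefully is that $\mathrm{Ksdim}_1(A)=s$ holds exactly in the regular case, which is precisely the situation in which the rank assertion of (b)(ii) is being made.
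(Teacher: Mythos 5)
Your proposal is correct and follows essentially the same route as the paper's proof: both directions are funneled through Proposition \ref{prop:Ksdim_regular}, the implication (a)~$\Rightarrow$~(b) rests on the same ``multiply by the complementary product $z^{\underline{s}\setminus J}$'' trick to get $\overline{A}$-linear independence of the degree-$k$ monomials in $\mathsf{gr}_{I_A}(A)$, and (b)~$\Rightarrow$~(a) reads off $\mathrm{Ann}_{A_0}(z^{\underline{s}})=A_1^2$ from the top graded piece $I_A^s\simeq\overline{A}$. The bookkeeping points you flag (that the free basis of $I_A/I_A^2$ is a minimal generating set of $A_1$ via Nakayama, hence $A_1^{s+1}=0$) are exactly the ones the paper also records.
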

\begin{proof}
Assume (a). Then we have (i) and (ii) of (b) in Proposition \ref{prop:Ksdim_regular};
(i) is the same as (i) of (b) above. If we choose arbitrarily a minimal system 
$y_1,\dots,y_s$ of generators of the
$A_0$-module $A_1$, then 
\begin{equation}\label{eq:Ann}
\mathrm{Ann}_{A_0}(y^{\underline{s}})=A_1^2.
\end{equation}
We have $I_A^{s+1}=0$. Moreover, for each $0 < t \le s$, the products
\[y^J\ \mathrm{mod}~I_A^{t+1}\]
generate the $\overline{A}$-module $I_A^t/I_A^{t+1}$, where $J$ runs over all subsets of $\underline{s}$ of cardinality $t$. 
For (ii) and (iii) of (b) above, it remains to prove that the products of length $t$
are $\overline{A}$-linearly independent. By \eqref{eq:Ann}
this holds when $t=s$.
Assume that an $A$-linear combination 
\[ \sum_{J\subseteq\underline{s}, |J|=t} a_J y^J,\quad  
a_J\in A_0, \] 
belongs to $I_A^{t+1}$. Multiplying by the product $y^L$,
$L=\underline{s}\setminus J$, one obtains $a_J y^{\underline{s}}=0$ for each subset $J$ of cardinality $t$. This, together with the result proven above when $t=s$, show the desired 
result. 

Assume (b). By (ii) we have elements $z_1,\dots, z_s$ in $A_1$ 
which give rise to an $\overline{A}$-free basis of $A_1/A_1^3=I_A/I_A^2$.
The elements form a minimal system of generators of the $A_0$-module
$A_1$, and so $I_A^{s+1}=0$, since they 
give rise to a $K$-basis of $A_1/\mathfrak{m}_0A_1=(\mathfrak{m}/\mathfrak{m}^2)_1$.
By (iii) we have
$\overline{A}\simeq \overline{A} z^{\underline{s}}=I_A^s$, whence $\mathrm{Ann}_{A_0}(z^{\underline{s}})=A_1^2$. 
This together with (i) show (a), in view of
Proposition \ref{prop:Ksdim_regular} .   
\end{proof}
\begin{cor}\label{cor:localization_of_regular_local}
Given a regular local super-ring $A$, the localization $A_{\mathfrak{p}}$ at any prime
$\mathfrak{p}$ of $A$ is regular. 
\end{cor}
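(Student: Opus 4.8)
The plan is to deduce regularity of $A_{\mathfrak{p}}$ from the internal characterization furnished by Proposition \ref{prop:regular_local}, by checking that each of its three conditions (i)--(iii) is inherited by the localization. Throughout I write $S = A_0 \setminus \mathfrak{p}_0$, so that $A_{\mathfrak{p}} = S^{-1}A$ is a local super-ring with maximal super-ideal $A_{\mathfrak{p}}\mathfrak{p}$, and I use that $S^{-1}(-)$ is an exact functor on $A$-supermodules. The guiding principle is that every construction entering Proposition \ref{prop:regular_local} --- the powers $I_A^n$, the quotients $I_A^n/I_A^{n+1}$, the associated graded superalgebra $\mathsf{gr}_{I_A}(A)$, and the exterior algebra $\wedge_{\overline{A}}(I_A/I_A^2)$ --- commutes with $S^{-1}(-)$, so that the whole picture of Subsection \ref{sec:regular_locall} simply localizes.

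For condition (i) I would first identify $\overline{A_{\mathfrak{p}}}$ explicitly. Since $(A_{\mathfrak{p}})_0 = S^{-1}A_0$ and $((A_{\mathfrak{p}})_1)^2 = S^{-1}(A_1^2)$, we get $\overline{A_{\mathfrak{p}}} = S^{-1}A_0/S^{-1}A_1^2 \simeq S^{-1}\overline{A}$. Because $\mathfrak{p}$ is prime we have $I_A \subseteq \mathfrak{p}$, hence $A_1^2 \subseteq \mathfrak{p}_0$, so the image of $S$ in $\overline{A}$ is exactly $\overline{A}\setminus\overline{\mathfrak{p}}$, where $\overline{\mathfrak{p}} = \mathfrak{p}/I_A$. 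Thus $\overline{A_{\mathfrak{p}}} \simeq \overline{A}_{\overline{\mathfrak{p}}}$ is the localization of $\overline{A}$ at a prime ideal. Since $A$ is regular, $\overline{A}$ is a regular local ring by Proposition \ref{prop:regular_local}(b)(i), and the classical theorem that a localization of a regular local ring at a prime is again regular (see \cite{mats}) then yields condition (i) for $A_{\mathfrak{p}}$.

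Conditions (ii) and (iii) follow formally. One has $I_{A_{\mathfrak{p}}} = S^{-1}I_A$ and $I_{A_{\mathfrak{p}}}^2 = S^{-1}I_A^2$, so $I_{A_{\mathfrak{p}}}/I_{A_{\mathfrak{p}}}^2 \simeq S^{-1}(I_A/I_A^2)$ as $\overline{A_{\mathfrak{p}}}$-modules; as $I_A/I_A^2$ is $\overline{A}$-free, its localization is $\overline{A_{\mathfrak{p}}}$-free of the same rank, giving (ii). For (iii), since $S^{-1}(-)$ commutes with quotients and with products of ideals one has $\mathsf{gr}_{I_{A_{\mathfrak{p}}}}(A_{\mathfrak{p}}) \simeq S^{-1}\mathsf{gr}_{I_A}(A)$, and since exterior powers of a module commute with localization one has $\wedge_{\overline{A_{\mathfrak{p}}}}(I_{A_{\mathfrak{p}}}/I_{A_{\mathfrak{p}}}^2) \simeq S^{-1}\wedge_{\overline{A}}(I_A/I_A^2)$; under these identifications $\lambda_{A_{\mathfrak{p}}}$ is nothing but $S^{-1}\lambda_A$, which is an isomorphism because $\lambda_A$ is. Having verified (i)--(iii), Proposition \ref{prop:regular_local} lets me conclude that $A_{\mathfrak{p}}$ is regular.

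The only genuinely nontrivial ingredient is the commutative-algebra fact invoked for condition (i): that a localization of a regular local ring at a prime ideal is regular. This is the step I expect to be the main obstacle, as it rests on Serre's homological characterization of regularity (finiteness of global dimension) rather than on any soft manipulation; everything else is a routine consequence of the exactness of localization and its compatibility with the grading, quotient, and exterior-algebra constructions used to define $\lambda_A$.
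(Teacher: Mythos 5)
Your proposal is correct and follows essentially the same route as the paper: the paper's proof likewise verifies conditions (i)--(iii) of Proposition \ref{prop:regular_local}(b) by observing that all the constructions entering $\lambda_A$ commute with localization, with the regularity of $\overline{A}_{\overline{\mathfrak{p}}}$ resting on the classical theorem about localizations of regular local rings. Your write-up simply makes explicit the identifications (such as the image of $S$ in $\overline{A}$ being $\overline{A}\setminus\overline{\mathfrak{p}}$) that the paper leaves tacit.
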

\begin{proof}
Write $p$ for $\mathfrak{p}_0$. Assume (b) above. 
Note that the constructions relevant to defining $\lambda_A$ commute with the localization. 
Therefore, (i)~$\overline{A_p}= (\overline{A})_p$ is regular, 
(ii)~$I_{A_p}/I_{A_p}^2=(I_A/I_A^2)_p$ is $\overline{A_p}$-free, and 
(iii)~$\lambda_{A_p}= (\lambda_A)_p$
is an isomorphism. Thus, $A_p$ satisfies (b).  
\end{proof}
We define
\[ \mathsf{gr}_{\mathfrak{m}}(A) 
:= \bigoplus_{n\ge 0} \mathfrak{m}^n/\mathfrak{m}^{n+1} =K \oplus \mathfrak{m}/\mathfrak{m}^2 \oplus \dots.\]
This is a graded superalgebra over $K$. 

Let $S_K((\mathfrak{m}/\mathfrak{m}^2)_0)$ denote the symmetric $K$-algebra on 
the even component of $\mathfrak{m}/\mathfrak{m}^2$; this is a graded algebra.  
We let 
\[ \kappa_A : S_K((\mathfrak{m}/\mathfrak{m}^2)_0) \otimes_K \wedge_K((\mathfrak{m}/\mathfrak{m}^2)_1) 
\to \mathsf{gr}_{\mathfrak{m}}(A) \]
denote the graded $K$-superalgebra map induced by the embedding  
$\mathfrak{m}/\mathfrak{m}^2 \to \mathsf{gr}_{\mathfrak{m}}(A)$. 
One sees that this is a surjection. 

The following theorem generalizes \cite[Theorem 35]{mats} (see also \cite[Theorem 11.22]{AM}).
\begin{theorem}\label{thm:regular_local}
A local super-ring $A$ is regular if and only if  
$\kappa_A$ is an isomorphism. 
\end{theorem}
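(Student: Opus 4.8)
The plan is to prove the equivalence by reducing the graded superalgebra $\mathsf{gr}_{\mathfrak{m}}(A)$ to its two ``pure'' pieces---the even part coming from $\overline{A}$ and the odd part coming from the exterior algebra on $I_A/I_A^2$---and then invoke the two criteria already established, namely the classical result \cite[Theorem 35]{mats} on the even side and Proposition \ref{prop:regular_local} on the odd side. The central observation I would exploit is that the associated graded superalgebra $\mathsf{gr}_{\mathfrak{m}}(A)$ carries, besides its $\mathbb{Z}$-grading by powers of $\mathfrak{m}$, a compatible filtration by powers of $I_A$, and the interplay of these two filtrations should let me factor the map $\kappa_A$ through $\lambda_A$ and through the even comparison map.

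First I would analyze the target $\mathsf{gr}_{\mathfrak{m}}(A)$. Using \eqref{eq:m/m^2} and the nilpotence of $I_A$, I would show that $\mathsf{gr}_{\mathfrak{m}}(A)$ decomposes compatibly so that its ``$I_A$-degree zero'' part is precisely $\mathsf{gr}_{\overline{\mathfrak{m}}}(\overline{A})$, while the higher $I_A$-degree parts are built from $I_A^t/I_A^{t+1}$, each a module over $\overline{A}$ and hence, after passing to $\mathsf{gr}$, a module over $\mathsf{gr}_{\overline{\mathfrak{m}}}(\overline{A})$. The domain of $\kappa_A$ factors as $S_K((\mathfrak{m}/\mathfrak{m}^2)_0)\otimes_K\wedge_K((\mathfrak{m}/\mathfrak{m}^2)_1)$, where the first tensor factor is the domain of the classical comparison map $\kappa_{\overline{A}}$ for the ordinary local ring $\overline{A}$, and the second factor is the fiber $\wedge_K((\mathfrak{m}/\mathfrak{m}^2)_1)=\wedge_{\overline{A}}(I_A/I_A^2)\otimes_{\overline{A}}K$ of the domain of $\lambda_A$. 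This strongly suggests writing $\kappa_A$ as a composite (or as a suitably filtered/associated-graded version) of $\kappa_{\overline{A}}$ tensored with the reduction of $\lambda_A$.

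With this structure in place, the ``only if'' direction proceeds by assuming $A$ regular and using Proposition \ref{prop:regular_local}: condition (i) gives that $\overline{A}$ is regular, so by \cite[Theorem 35]{mats} the even comparison map $\kappa_{\overline{A}}$ is an isomorphism; conditions (ii) and (iii) give that $\lambda_A$ is an isomorphism and $I_A/I_A^2$ is $\overline{A}$-free, which upon base change to $K$ makes the odd exterior factor match $\mathsf{gr}_{I_A}(A)\otimes_{\overline{A}}(\text{appropriate graded fiber})$ freely. Assembling these, $\kappa_A$ becomes an isomorphism because each graded piece, being a tensor product of isomorphisms on free modules, is an isomorphism. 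For the ``if'' direction, I would run the argument in reverse: restricting the isomorphism $\kappa_A$ to $I_A$-degree zero recovers $\kappa_{\overline{A}}$, forcing $\overline{A}$ regular by \cite[Theorem 35]{mats}; comparing ranks of the graded components (using that $\kappa_A$ is a graded isomorphism and that $\wedge_K$ and $S_K$ have known Hilbert series) forces $I_A/I_A^2$ to be free over $\overline{A}$ and $\lambda_A$ to be an isomorphism, yielding (b) of Proposition \ref{prop:regular_local}.

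The main obstacle I anticipate is making precise and rigorous the claim that $\kappa_A$ ``factors as'' $\kappa_{\overline{A}}\otimes(\text{reduction of }\lambda_A)$. Because $\mathsf{gr}_{\mathfrak{m}}(A)$ mixes both filtrations, the even and odd pieces do not split off as a literal tensor product of graded algebras without care; one must track how multiplication in $\mathsf{gr}_{\mathfrak{m}}(A)$ interacts with both gradings, and verify that the odd generators, which live in $(\mathfrak{m}/\mathfrak{m}^2)_1=I_A/\mathfrak{m}_0 I_A$, genuinely generate a free exterior algebra over $\mathsf{gr}_{\overline{\mathfrak{m}}}(\overline{A})$ exactly when $\lambda_A$ is an isomorphism. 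Establishing the injectivity side---that no unexpected relations among products of odd generators appear in $\mathsf{gr}_{\mathfrak{m}}(A)$ beyond those forced by $\mathrm{Ann}_{A_0}(z^{\underline{s}})=A_1^2$---is where the real content lies, and I expect the argument to mirror the linear-independence computation already carried out in the proof of Proposition \ref{prop:regular_local}.
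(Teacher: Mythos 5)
Your outline assembles the right ingredients (the classical comparison theorem for $\overline{A}$, Proposition \ref{prop:regular_local}, and the device of multiplying by complementary products of the odd generators), but at the two places where the actual content lies it substitutes a hope for an argument. In the ``if'' direction, the claim that ``restricting $\kappa_A$ to $I_A$-degree zero recovers $\kappa_{\overline{A}}$'' is not formal: the comparison passes through the natural surjection $\mathsf{gr}_{\mathfrak{m}}(A)\to\mathsf{gr}_{\overline{\mathfrak{m}}}(\overline{A})$, which is far from injective, so bijectivity of $\kappa_A$ does not immediately give injectivity of $\kappa_{\overline{A}}$. The missing step is exactly the paper's computation: a relation $\sum a_{i_1\dots i_t}x_{i_1}\cdots x_{i_t}\in\mathfrak{m}_0^{t+1}+A_1^2$ must be multiplied by $y^{\underline{s}}$ (which kills $A_1^2$) to become a relation in degree $t+s$ that $\kappa_A$ detects; one must then separately deduce $\mathrm{Ann}_{A_0}(y^{\underline{s}})=A_1^2$ from $\mathrm{Ann}_{\overline{A}}(y^{\underline{s}})\subseteq\bigcap_{i\geq 0}\overline{\mathfrak{m}}^i=0$ before Proposition \ref{prop:Ksdim_regular} applies. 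Your alternative of reading off freeness of $I_A/I_A^2$ and bijectivity of $\lambda_A$ from Hilbert series is also doubtful: the $K$-dimensions of the spaces $\mathfrak{m}^n/\mathfrak{m}^{n+1}$ do not control the $\overline{A}$-module structure of $I_A/I_A^2$.

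The ``only if'' direction is where the real hole is. The factorization of $\mathsf{gr}_{\mathfrak{m}}(A)$ as (roughly) $\mathsf{gr}_{\overline{\mathfrak{m}}}(\overline{A})\otimes_K\wedge_K((\mathfrak{m}/\mathfrak{m}^2)_1)$ is not a refinement of the linear-independence computation in Proposition \ref{prop:regular_local}; it is strictly harder. In $\mathsf{gr}_{I_A}(A)$ the monomials $y^J$ of a fixed degree all have the same $|J|$, so multiplying by $y^{\underline{s}\setminus J}$ for minimal $|J|$ annihilates every other term. In $\mathfrak{m}^n/\mathfrak{m}^{n+1}$ monomials $x^{\alpha}y^J$ with different $|J|$ coexist, $\mathfrak{m}^n=\sum_{i+j=n}\mathfrak{m}_0^iA_1^j$ mixes the two filtrations, and after multiplying by $y^{\underline{s}\setminus J}$ one still must show that $Q(x)\,y^{\underline{s}}\in\mathfrak{m}^{d+s+1}$ forces the coefficients of the degree-$d$ form $Q$ into $\mathfrak{m}_0$; this needs something like $\mathfrak{m}^{d+s+1}\cap A_1^s\subseteq\mathfrak{m}_0^{d+1}A_1^s$, which does not follow from conditions (i)--(iii) of Proposition \ref{prop:regular_local}(b) by the naive filtration bookkeeping (the extensions $0\to A_1^{j+2}\to A_1^j\to A_1^j/A_1^{j+2}\to 0$ are only $\overline{A}$-split on the quotient, not $A_0$-split). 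The paper gets around this by first reducing to the case where $A$ is complete --- legitimate because $\kappa_A=\kappa_{\widehat{A}}$ and regularity ascends to $\widehat{A}$ via Lemma \ref{propertiesoflocal} and Proposition \ref{completionofNoetherian} --- and then expanding every element as a formal series $\sum a_{\alpha,I}x^{\alpha}y^I$ and running a ``good exponent of minimal cardinality'' argument against the freeness of $I_A^s=\overline{A}\,y^{\underline{s}}$. Some such device, completion or an explicit treatment of the mixed filtration, is indispensable, and its absence is the gap you would need to close.
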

\begin{proof}
The part "if".\
Assume that $\kappa_A$ is an isomorphism.  
Let $x_1,\dots, x_r$ and $y_1,\dots, y_s$ be 
even and odd elements, respectively, of $\mathfrak{m}$ which all together give rise to
a $K$-basis of $\mathfrak{m}/\mathfrak{m}^2$.
We have $A_1^{s+1}=0$, as was seen before. 

We wish to show that the graded (polynomial) subalgebra $\mathcal{P}$ of $\mathsf{gr}_{\mathfrak{m}}(A)$ which
is freely generated by $x_i~\mathrm{mod}(\mathfrak{m}_0^2+A_1^2)$, $1 \le i \le r$, maps isomorphically
onto $\mathsf{gr}_{\overline{\mathfrak{m}}}(\overline{A})$; this implies that $\overline{A}$ is regular
by \cite[Theorem 35]{mats}. 
Fix $t > 0$. It suffices to show that if an $A_0$-linear combination
\begin{equation}\label{eq:A0-linear_combination}
\sum_{0\le i_1 \le \dots \le i_t \le r}
a_{i_1\dots i_t} x_{i_1}\dots x_{i_t} 
\end{equation}
is in  $\mathfrak{m}_0^{t+1} + A_1^2$, then all the coefficients $a_{i_1\dots i_t}$ are in $\mathfrak{m}_0$. 
This follows since $A_1^2 y^{\underline{s}}=0$, and so the assumption implies
\[ \sum_{0\le i_1 \le \dots \le i_t \le r}
a_{i_1\dots i_t} x_{i_1}\dots x_{i_t}\, y^{\underline{s}} \in \mathfrak{m}^{t+s+1} . \]

The result just proven implies that in $\mathsf{gr}_{\mathfrak{m}}(A)$, no non-zero element in $\mathcal{P}$
annihilates $y^{\underline{s}}~\mathrm{mod}~\mathfrak{m}^{s+1}$. Therefore, 
the set $\mathrm{Ann}_{\overline{A}}(y^{\underline{s}})$ of those elements in $\overline{A}$ which
annihilate $y^{\underline{s}}$ on $A$ is included in $\bigcap_{i\ge 0}\overline{\mathfrak{m}}^i=0$, and so
$\mathrm{Ann}_{A_0}(y^{\underline{s}})=A_1^2$. It follows by Proposition \ref{prop:Ksdim_regular} that
$A$ is regular. 

The part "only if".\ 
First, we wish to see from the results in Section \ref{subsec:completion} that
$A$, and hence $A_0$ as well as $\overline{A}$, with all replaced by their completions,
may be assumed to be complete. 
Indeed, we see $\mathfrak{m}/\mathfrak{m}^2= \widehat{\mathfrak{m}}/\widehat{\mathfrak{m}}^2$,
$\mathsf{gr}_{\mathfrak{m}}(A)=\mathsf{gr}_{\widehat{\mathfrak{m}}}(\widehat{A})$, and that
$\kappa_A$ is identified with $\kappa_{\widehat{A}}$. 
In view of Lemma \ref{propertiesoflocal}
it remains to prove that $\widehat{A}$ is regular, assuming that $A$ is regular. 
By \cite[(24D)]{mats} $\widehat{A}_0$ is regular. 
The desired result follows by Proposition \ref{prop:Ksdim_regular}, 
since one sees, using Proposition \ref{completionofNoetherian}, 
that if $y^{\underline{s}} : A_0 \to A$, the multiplication by $y^{\underline{s}}$,
has $A_1^2$ as the kernel, then 
$y^{\underline{s}} : \widehat{A}_0 \to \widehat{A}$ 
has $\widehat{A_1^2}=(\widehat{A}_1)^2$ as the kernel.  

We may now assume that $A$, $A_0$ and $\overline{A}$ are complete, and satisfy (i)--(iii) of 
(b) in Proposition \ref{prop:regular_local}. Choose the even and odd elements $x_1, \ldots, x_r$ and $y_1, \ldots, y_s$ as above. They are the topological generators of $A$, regarded as a topological ring, i.e. each element $a\in A$ is equal to a (not necessary unique) series
\[\sum_{\alpha\in\mathbb{Z}_{\geq 0}^s, I\subseteq\underline{s}}a_{\alpha, I}x^{\alpha}y^I,\]
where $x^{\alpha}=x_1^{\alpha_1}\ldots x_r^{\alpha_r}$, provided $\alpha=(\alpha_1, \ldots, \alpha_r)$, and $a_{\alpha, I}\in A_0\setminus\mathfrak{m}_0$, whenever $a_{\alpha, I}\neq 0$. 
Such a series is said to be \emph{formally nonzero}, if at least one coefficient $a_{\alpha, I}\neq 0$. 

To complete the proof one has to show that any formally nonzero series represents a nonzero element of $A$. 

Observe that $\overline{A}$ is topologically generated by the elements $x_i+A_1^2, 1\leq i\leq r$, that is each element $\overline{a}$ of $\overline{A}$ is equal to a series
\[\sum_{\alpha\in\mathbb{Z}_{\geq 0}^s}a_{\alpha, I}x^{\alpha} \pmod{A_1^2},\]
and \cite[Theorem 35]{mats} says that $\overline{a}\neq 0$ if and only if $\overline{a}$ is represented by a formally nonzero series in $x$-s. 

Assume that there is a formally nonzero series
\[\sum_{\alpha\in\mathbb{Z}_{\geq 0}^s, I\subseteq\underline{s}}a_{\alpha, I}x^{\alpha}y^I,\]
that represents a zero in $A$. This series can be represented as a sum
$\sum_{I\subseteq\underline{s}}f_I y^I$, 
where among all coefficients $f_I\in A_0$ there is at least one, say $f_J$, which is formally nonzero modulo $A^2_1$. We call $J$ a \emph{good exponent}.

Choose a good exponent $J$ with minimal $|J|$. Multiplying by $y^{\underline{s}\setminus J}$, one obtains an equation
\[f_J y^{\underline{s}}=\overline{f_J}y^{\underline{s}}=0,\]
which obviously contradicts the fact that $I_A^s=A_1^s$ is a free $\overline{A}$-module, freely generated by $y^{\underline{s}}$. 
\end{proof}

Given a filed $K$ and non-negative integers $r$ and $s$, we have the $K$-superalgebra 
\[ K[[X_1,\dots, X_r]]\otimes_K \wedge_K(Y_1,\dots, Y_s), \] 
as above; this is called 
the \emph{formal power series $K$-superalgebra} in even and odd variables, $X_1,\dots, X_r$ and
$Y_1,\dots, Y_s$.  
In fact, this is a complete regular local super-ring by Theorem \ref{thm:regular_local},
since the kappa map is an isomorphism; it is indeed the
identity map on $K[X_1,\dots, X_r] \otimes_K \wedge_K(Y_1,\dots, Y_s)$. 
\begin{rem}
\emph{Theorem \ref{thm:regular_local} coincides with Theorem 3.3 from \cite{sm}, but our proof is quite different and seems more elementary.}
\end{rem}
\begin{cor}\label{cor:completion}(see \cite{sm}, or \cite{maszub2}, A.3)
Let $(A,\mathfrak{m})$ be a local super-ring. 
Assume that $A_0$ includes a field. 
Then the following are equivalent:
\begin{itemize}
\item[(a)] 
$A$ is regular;
\item[(b)] 
$\widehat{A}$ is regular;
\item[(c)]
$\widehat{A}$ is isomorphic to 
$K[[X_1,\dots, X_r]]\otimes_K \wedge_K(Y_1,\dots, Y_s)$, where $K = A/\mathfrak{m}$ and
$r \mid s= \mathrm{sdim}_K(\mathfrak{m}/\mathfrak{m}^2)$. 
\end{itemize}
\end{cor}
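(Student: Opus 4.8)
The plan is to reduce all three equivalences to Theorem \ref{thm:regular_local} together with the classical Cohen structure theorem. First I would dispose of (a) $\Leftrightarrow$ (b). By Lemma \ref{propertiesoflocal}, the completion $\widehat{A}$ is again a complete Noetherian local super-ring, with maximal super-ideal $\widehat{\mathfrak{m}}$ and residue field $K$, so Theorem \ref{thm:regular_local} applies to both $A$ and $\widehat{A}$. As was already observed in the proof of that theorem, $\mathfrak{m}/\mathfrak{m}^2=\widehat{\mathfrak{m}}/\widehat{\mathfrak{m}}^2$ and $\mathsf{gr}_{\mathfrak{m}}(A)=\mathsf{gr}_{\widehat{\mathfrak{m}}}(\widehat{A})$, under which $\kappa_A$ is identified with $\kappa_{\widehat{A}}$. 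Hence $\kappa_A$ is an isomorphism if and only if $\kappa_{\widehat{A}}$ is, which is exactly (a) $\Leftrightarrow$ (b).

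Next, (c) $\Rightarrow$ (b) is immediate: the formal power series $K$-superalgebra is a complete regular local super-ring (as noted right after Theorem \ref{thm:regular_local}, its kappa map being the identity), so any super-ring isomorphic to it is regular. It remains to prove (b) $\Rightarrow$ (c), i.e.\ a super analogue of Cohen's structure theorem: a complete regular local super-ring whose even part contains a field is a formal power series superalgebra over its residue field. Since $\widehat{A}_0\supseteq A_0$ still contains a field, I may and will work with $\widehat{A}$ throughout; write $\mathfrak{m}$ for $\widehat{\mathfrak{m}}$ and recall $r\mid s=\mathrm{sdim}_K(\mathfrak{m}/\mathfrak{m}^2)=\mathrm{Ksdim}(\widehat{A})$.

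The construction goes as follows. Applying the classical Cohen theorem to the complete local ring $\widehat{A}_0$ (which contains a field and has residue field $K$), I fix a coefficient field $K\hookrightarrow\widehat{A}_0$. By Proposition \ref{prop:regular_local} the ring $\overline{\widehat{A}}$ is regular, so I pick even elements $x_1,\dots,x_r\in\mathfrak{m}_0$ whose images form a regular system of parameters of $\overline{\widehat{A}}$, that is a $K$-basis of $(\mathfrak{m}/\mathfrak{m}^2)_0$; again by Proposition \ref{prop:regular_local}, I pick odd elements $y_1,\dots,y_s\in\widehat{A}_1$ forming an $\overline{\widehat{A}}$-free basis of $I_{\widehat{A}}/I_{\widehat{A}}^2$, whose residues form a $K$-basis of $(\mathfrak{m}/\mathfrak{m}^2)_1$. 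I then define the $K$-superalgebra morphism
\[ \phi : K[[X_1,\dots,X_r]]\otimes_K\wedge_K(Y_1,\dots,Y_s)\to\widehat{A},\quad X_i\mapsto x_i,\ Y_j\mapsto y_j. \]
This is well defined: the polynomial map $K[X]\to\widehat{A}_0$ extends to power series because $x_i\in\mathfrak{m}$ and $\widehat{A}$ is $\mathfrak{m}$-adically complete and Hausdorff (Proposition \ref{Hausdorff}), while the finite-dimensional factor $\wedge_K(Y)$ causes no convergence issue, the relations $Y_j^2=0$ being automatic in the super-commutative target.

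Finally I would show $\phi$ is an isomorphism by passing to associated graded super-rings for the maximal-ideal-adic filtrations. The source has associated graded $K[X]\otimes_K\wedge_K(Y)$ (its kappa map is the identity), and by Theorem \ref{thm:regular_local} the regularity of $\widehat{A}$ makes $\kappa_{\widehat{A}}$ identify $\mathsf{gr}_{\mathfrak{m}}(\widehat{A})$ with the same algebra. The induced graded morphism sends the degree-one generators $X_i,Y_j$ to the classes of $x_i,y_j$, which by construction are $K$-bases of $(\mathfrak{m}/\mathfrak{m}^2)_0$ and $(\mathfrak{m}/\mathfrak{m}^2)_1$; thus in degree one it is an invertible $K$-linear map, and since both graded algebras are generated in degree one it is an isomorphism. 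As both source and target are complete and Hausdorff for their filtrations, a filtered morphism inducing an isomorphism on associated graded is itself an isomorphism, giving (c). The main obstacle is precisely this last passage together with the invocation of Cohen's theorem: one must ensure the chosen coefficient field, even parameters and odd basis fit together so that $\mathsf{gr}(\phi)$ is visibly bijective in degree one, and that the completeness and Hausdorff hypotheses legitimately upgrade the graded isomorphism into an isomorphism of $\phi$ itself.
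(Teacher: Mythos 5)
Your proof is correct, but the route you take to (c) is genuinely different from the paper's. The parts (a) $\Leftrightarrow$ (b) and (c) $\Rightarrow$ (b) match the paper exactly (identification of $\kappa_A$ with $\kappa_{\widehat{A}}$, and the observation preceding the corollary). For the main implication the paper proves (a) $\Rightarrow$ (c) by reducing to complete $A$, fixing a coefficient field by Cohen's theorem, identifying $\overline{A}\simeq K[[X_1,\dots,X_r]]$ and using the \emph{formal smoothness} of this ring to split the surjection $A_0\to\overline{A}$ (whose kernel $A_1^2$ is nilpotent); the structure $A\simeq \overline{A}\otimes_K\wedge_K(Y_1,\dots,Y_s)$ is then read off from Proposition \ref{prop:regular_local}(b), i.e.\ from the freeness of $I_A/I_A^2$ and the isomorphism $\lambda_A$ on the $I_A$-adic graded. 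You instead build the candidate map $\phi$ directly and check it on the $\mathfrak{m}$-adic associated graded via $\kappa_{\widehat{A}}$ (Theorem \ref{thm:regular_local}), then lift through the complete Hausdorff filtrations. Your version avoids formal smoothness and $\lambda_A$ entirely, at the cost of invoking the standard lemma that a filtered morphism of complete Hausdorff filtered rings inducing an isomorphism on associated graded is itself an isomorphism (available in the cited Bourbaki or Atiyah--Macdonald); the paper's version confines the lifting problem to the nilpotent ideal $A_1^2$, where no completeness argument is needed. One step you should make explicit: bijectivity of $\mathsf{gr}(\phi)$ in degree one together with generation in degree one yields only surjectivity; injectivity follows because both graded algebras are $K[X]\otimes_K\wedge_K(Y)$ and so have graded pieces of equal finite dimension. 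That dimension count is exactly what your appeal to $\kappa_{\widehat{A}}$ supplies, so there is no gap, but it deserves a sentence.
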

\begin{proof}
(c) $\Rightarrow$ (b).\ 
This was just seen above.

(b) $\Rightarrow$ (a).\ 
This follows by Theorem \ref{thm:regular_local}, since $\kappa_{A}$
is identified with $\kappa_{\widehat{A}}$.

(a) $\Rightarrow$ (c).\
As above, one can assume that $A$ is complete. Since $A_0$ includes a field, Cohen's theorem implies that $A$ has a coefficient field (or field of representatives), isomorphic to $K$. Moreover,
$\overline{A}\simeq
K[[X_1, \ldots, X_r]]$ is formally smooth (see \cite[Example 3., p.200; Corollary 2, p.206]{mats}). 
Thus the epimorphism $A_0\to\overline{A}$ splits and (c) follows by Proposition \ref{prop:regular_local}(b).
\end{proof}

\subsection{Regular super-rings}\label{sec:regular_global}
Let $A$ be a super-ring which we continue to assume to be Noetherian. The super-ring $A$ is said to be \emph{regular} if for every prime $\mathfrak{p}$ of $A$, the local super-ring
$A_{\mathfrak{p}}$ is regular. By Corollary \ref{cor:localization_of_regular_local} this is 
equivalent to saying that for every maximal
$\mathfrak{m}$ of $A$, the local super-ring
$A_{\mathfrak{m}}$ is regular. 

\begin{pr}\label{prop:regular_global}
For a super-ring $A$, the following are equivalent:
\begin{itemize}
\item[(a)] $A$ is regular;
\item[(b)]
\begin{itemize} \item[(i)] The ring $\overline{A}$ is regular, 
\item[(ii)] the $\overline{A}$-module $I_A/I_A^2$ is projective, and 
\item[(iii)] $\lambda_A$ is an isomorphism.
\end{itemize}
\end{itemize}
\end{pr}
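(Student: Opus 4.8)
The plan is to deduce the global statement from the local Proposition~\ref{prop:regular_local} by localizing at every maximal super-ideal and invoking the standard principle that, over a Noetherian ring, both projectivity of a finitely generated module and bijectivity of a module (or graded-algebra) map are local properties. By definition $A$ is regular if and only if $A_{\mathfrak{p}}$ is regular for every prime $\mathfrak{p}$, and by Corollary~\ref{cor:localization_of_regular_local} it suffices to test this at the maximal super-ideals $\mathfrak{m}$. Thus the whole argument reduces to comparing conditions (i)--(iii) with their counterparts for each $A_{\mathfrak{m}}$.

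First I would record that all the constructions occurring in (i)--(iii) commute with localization at a maximal $\mathfrak{m}$ (write $p$ for $\mathfrak{m}_0$), exactly as already observed in the proof of Corollary~\ref{cor:localization_of_regular_local}: one has $\overline{A_{\mathfrak{m}}} = (\overline{A})_p$, $I_{A_{\mathfrak{m}}}/I_{A_{\mathfrak{m}}}^2 = (I_A/I_A^2)_p$, and $\lambda_{A_{\mathfrak{m}}} = (\lambda_A)_p$. These identities hold because localization at $p$ is exact and commutes with the formation of $I_A$, of the quotient $A_0/A_1^2$, and of the associated graded algebra $\mathsf{gr}_{I_A}$; in particular $(\wedge_{\overline{A}}(I_A/I_A^2))_p = \wedge_{(\overline{A})_p}((I_A/I_A^2)_p)$ and $(\mathsf{gr}_{I_A}(A))_p = \mathsf{gr}_{I_{A_{\mathfrak{m}}}}(A_{\mathfrak{m}})$ compatibly with the gradings.

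For (a)~$\Rightarrow$~(b) I would assume $A$ regular, so that each $A_{\mathfrak{m}}$ satisfies (b) of Proposition~\ref{prop:regular_local}. Then the rings $(\overline{A})_p = \overline{A_{\mathfrak{m}}}$ are regular for all maximals, whence $\overline{A}$ is regular, giving (i); the finitely generated $\overline{A}$-module $I_A/I_A^2 = A_1/A_1^3$ becomes free after localizing at every maximal, hence is locally free, hence projective over the Noetherian ring $\overline{A}$, giving (ii); and $(\lambda_A)_p = \lambda_{A_{\mathfrak{m}}}$ is an isomorphism at every maximal, so $\lambda_A$ is an isomorphism, giving (iii). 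Conversely, for (b)~$\Rightarrow$~(a), assuming (i)--(iii) and fixing a maximal $\mathfrak{m}$, the same identities show that $\overline{A_{\mathfrak{m}}}$ is regular (a localization of a regular ring), that $I_{A_{\mathfrak{m}}}/I_{A_{\mathfrak{m}}}^2$ is free (a localization of a finitely generated projective module at a local ring), and that $\lambda_{A_{\mathfrak{m}}}$ is an isomorphism; Proposition~\ref{prop:regular_local} then yields that $A_{\mathfrak{m}}$ is regular, and hence $A$ is regular.

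The main obstacle is not any single deep step but the careful bookkeeping that the passage between ``projective over $\overline{A}$'' and ``free over each $\overline{A_{\mathfrak{m}}}$'' goes through cleanly (which uses that $\overline{A}$ is Noetherian and $I_A/I_A^2$ finitely generated, hence finitely presented, so local freeness at maximals already forces projectivity), together with the verification that $\lambda_A$ commutes with localization as a \emph{graded} $\overline{A}$-algebra map, so that testing its bijectivity on each finitely generated graded component locally at the maximal ideals suffices. Once the localization identities of the second paragraph are in place, the remainder of the proof is essentially formal.
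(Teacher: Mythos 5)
Your proposal is correct and follows essentially the same route as the paper: the paper's own proof is a one-line appeal to Proposition \ref{prop:regular_local} together with the observation that each of the conditions (regularity, projectivity of $I_A/I_A^2$, bijectivity of $\lambda_A$) admits a local criterion, which is exactly the argument you spell out in detail. Your expansion of the localization identities and of the passage between projective over $\overline{A}$ and free over each $\overline{A_{\mathfrak{m}}}$ is a faithful (and more explicit) rendering of what the paper leaves implicit.
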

\begin{proof}
This follows from Proposition \ref{prop:regular_local}, since the conditions above admit
``local criterion" in the sense that each of them is satisfied if and only if 
the condition naturally obtained by localization at every prime/maximal is satisfied.  
\end{proof}
\begin{rem}\label{coincidenceofdefinitions}
\emph{Proposition \ref{prop:regular_global} shows that our definition of regularity is equivalent to the definition of regularity introduced in \cite[3.3, p.79]{sm}. For example, if $A$ is an regular super-ring, no matter local or not, then any minimal system of generators of the $A_0$-module $I_A/I_A^2$ is an odd regular sequence of maximal length.}  
\end{rem}
\begin{rem}\label{acounterexample}
\emph{The converse of Lemma \ref{whensuper-dimensionisdecreasing}(b) is not true, even if $R$ is regular. In fact, let $R=K[Y]$ be a polynomial superalgebra in $s$ odd free generators as in subsection 4.4. Set $f=Y^{L_1}+Y^{L_2}$, where $L_1\not\subseteq L_2, L_2\not\subseteq L_1, L_1\cap L_2\neq\emptyset,$ and $|L_1|\equiv |L_2|\pmod 2$. Proposition \ref{t=2} implies $\mathrm{Ksdim}_1(R/Rf)=s-1$, but 
$Y^{L_1\cap L_2}\in\mathrm{Ann}_R(f)\setminus Rf$.}
\end{rem}

\subsection{Krull super-dimension of completions of super-rings}

We still assume that all super-rings are Noetherian. 

\begin{lm}\label{a partial case}
Let $B$ be a Noetherian super-ring with $\mathrm{Kdim}(B)<\infty$. If $\mathfrak{p}$ is a prime super-ideal of $B$ such that $\mathrm{Ksdim}_0(B)=\mathrm{Ksdim}_0(B_{\mathfrak{p}})$, then $\mathrm{Ksdim}_1(B_{\mathfrak{p}})\leq \mathrm{Ksdim}_1(B)$.
\end{lm}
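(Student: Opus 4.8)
The plan is to reduce everything to the reformulation of the odd Krull dimension provided by Proposition \ref{another definition of odd super-dimension}, and then to exploit the elementary fact that Krull dimension can only drop under localization. Write $r=\mathrm{Ksdim}_0(B)=\mathrm{Kdim}(B_0)$, put $S=B_0\setminus\mathfrak{p}_0$, so that $(B_{\mathfrak{p}})_0=S^{-1}B_0=(B_0)_{\mathfrak{p}_0}$, and for each $l\geq 0$ set $\mathfrak{a}_l=\mathrm{Ann}_{B_0}(B_1^l)$. The hypothesis $\mathrm{Ksdim}_0(B)=\mathrm{Ksdim}_0(B_{\mathfrak{p}})$ says precisely that $\mathrm{Kdim}((B_0)_{\mathfrak{p}_0})=r$; this is the only place the hypothesis enters, and it is exactly what makes the squeeze below work.

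First I would record the behaviour of the relevant invariant under localization. Since $B$ is Noetherian, Lemma \ref{Noetherian} gives that $B_1$, and hence each $B_1^l$, is a finitely generated $B_0$-module, so the formation of annihilators commutes with localization: using $(B_{\mathfrak{p}})_1^l=S^{-1}(B_1^l)$ one has $\mathrm{Ann}_{(B_{\mathfrak{p}})_0}((B_{\mathfrak{p}})_1^l)=S^{-1}\mathfrak{a}_l$. Consequently
\[\dim((B_{\mathfrak{p}})_1^l)=\mathrm{Kdim}\bigl(S^{-1}(B_0/\mathfrak{a}_l)\bigr)\leq\mathrm{Kdim}(B_0/\mathfrak{a}_l)=\dim(B_1^l),\]
the middle inequality being the standard fact that the primes of a localization inject, inclusion-preservingly, into the primes of the ring, so that localization never increases Krull dimension.

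Now I would run the squeeze. Put $l=\mathrm{Ksdim}_1(B_{\mathfrak{p}})$; the case $l=0$ is trivial since $\mathrm{Ksdim}_1(B)\geq 0$, so assume $l\geq 1$. Applying Proposition \ref{another definition of odd super-dimension} to the Noetherian super-ring $B_{\mathfrak{p}}$ (whose even dimension is finite, being $\leq r$) yields $\dim((B_{\mathfrak{p}})_1^l)=\mathrm{Ksdim}_0(B_{\mathfrak{p}})=r$, where the last equality is the hypothesis. Combining this with the displayed inequality and the trivial bound $\dim(B_1^l)\leq\mathrm{Kdim}(B_0)=r$ gives
\[r=\dim((B_{\mathfrak{p}})_1^l)\leq\dim(B_1^l)\leq r,\]
hence $\dim(B_1^l)=r$. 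By Proposition \ref{another definition of odd super-dimension} applied to $B$ this forces $l\leq\mathrm{Ksdim}_1(B)$, i.e. $\mathrm{Ksdim}_1(B_{\mathfrak{p}})\leq\mathrm{Ksdim}_1(B)$, as desired.

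There is no serious obstacle here: once Proposition \ref{another definition of odd super-dimension} is available, the whole argument amounts to monotonicity of Krull dimension under localization, applied to the quotient $B_0/\mathfrak{a}_l$. The only points requiring a moment's care are the commutation of the annihilator $\mathfrak{a}_l$ with $S^{-1}(-)$, which needs the finite generation of $B_1^l$ over $B_0$ (guaranteed by Lemma \ref{Noetherian}), and the observation that the hypothesis is precisely what pins the localized even dimension to $r$, so that the always-valid inequality $\dim((B_{\mathfrak{p}})_1^l)\leq\dim(B_1^l)$ can be upgraded to the equality $\dim(B_1^l)=r$ needed to read off $l\leq\mathrm{Ksdim}_1(B)$.
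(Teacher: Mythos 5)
Your proof is correct and takes essentially the same route as the paper's: both arguments rest on the two facts that annihilators of the finitely generated odd powers commute with localization and that the hypothesis $\mathrm{Ksdim}_0(B)=\mathrm{Ksdim}_0(B_{\mathfrak{p}})$ pins the localized even dimension at $r=\mathrm{Kdim}(B_0)$. The only difference is one of packaging: you route the argument through Proposition \ref{another definition of odd super-dimension} and the monotonicity of Krull dimension under localization, whereas the paper works directly with an explicit system of odd parameters and longest prime chains descending from $\mathfrak{p}_0$.
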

\begin{proof}
The condition $\mathrm{Ksdim}_0(B)=\mathrm{Ksdim}_0(B_{\mathfrak{p}})$ is equivalent to $\mathrm{ht}(\mathfrak{p}_0)=\mathrm{Kdim}(B_0)$. In other words, any longest prime chain $\mathfrak{q}_0\subseteq\ldots\subseteq\mathfrak{q}_r=\mathfrak{p}_0$ is a longest prime chain in $B_0$ as well.
Thus the elements $\frac{z_1}{s_1}, \ldots , \frac{z_t}{s_t}$, where $z_1, \ldots , z_t\in B_1, s_1, \ldots, s_t\in S=B_0\setminus\mathfrak{p}_0$, form a system of odd parameters in $B_{\mathfrak{p}}$ if and only if
\[\mathrm{Ann}_{S^{-1}B_0}(\frac{z_1}{s_1} \ldots \frac{z_t}{s_t})=S^{-1}\mathrm{Ann}_{B_0}(z_1 \ldots s_t)\subseteq S^{-1}\mathfrak{q}_0\]
if and only if
\[\mathrm{Ann}_{B_0}(z_1 \ldots s_t)\subseteq \mathfrak{q}_0\]
for an ideal $\mathfrak{q}_0$ as above. Now the statement is obvious.
\end{proof}
\begin{theorem}\label{an equality of super-dimensions}
Let $A$ be a local Noetherian super-ring with maximal super-ideal $\mathfrak{m}$. Let $\widehat{A}$ denote its $\mathfrak{m}$-adic completion. Then
$\mathrm{Ksdim}(A)=\mathrm{Ksdim}(\widehat{A})$.
\end{theorem}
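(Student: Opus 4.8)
The goal is to establish that $\mathrm{Ksdim}(A) = \mathrm{Ksdim}(\widehat A)$ for a local Noetherian super-ring $(A,\mathfrak m)$. Since Krull super-dimension has two components, $r\mid s$, I need to match both the even and odd parts separately.

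Even part: $\mathrm{Ksdim}_0(A) = \mathrm{Kdim}(A_0)$. By Remark on $I_0$-adic topology (Remark I_0-adic and I-adic), $\widehat A_0$ is the $\mathfrak m_0$-adic completion of $A_0$. For a Noetherian local ring, dimension is preserved under completion — this is classical Matsumura/Atiyah-Macdonald. So $\mathrm{Ksdim}_0(A) = \mathrm{Ksdim}_0(\widehat A)$ immediately.

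Odd part: this is the real content. Here I need $\mathrm{Ksdim}_1(A) = \mathrm{Ksdim}_1(\widehat A)$. The odd Krull dimension is characterized (Proposition "another definition") as the largest $l$ with $\dim(A_1^l) = \mathrm{Kdim}(A_0)$, i.e. $\mathrm{Ann}_{A_0}(A_1^l)$ is contained in some minimal prime of maximal dimension.

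Let me sketch the plan.

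=== PROOF PROPOSAL ===

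The plan is to treat the even and odd components of the Krull super-dimension separately, exploiting the characterization of $\mathrm{Ksdim}_1$ given in Proposition \ref{another definition of odd super-dimension}.

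\textbf{Even part.} By Remark \ref{I_0-adic and I-adic}, the even component $\widehat{A}_0$ is the $\mathfrak{m}_0$-adic completion of the local Noetherian ring $A_0$. Since the (even) Krull dimension of a Noetherian local ring is preserved under completion (cf. \cite[(24D)]{mats}), we immediately obtain $\mathrm{Ksdim}_0(A)=\mathrm{Kdim}(A_0)=\mathrm{Kdim}(\widehat{A}_0)=\mathrm{Ksdim}_0(\widehat{A})$. Write $r$ for this common value.

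\textbf{Odd part.} It remains to show $\mathrm{Ksdim}_1(A)=\mathrm{Ksdim}_1(\widehat{A})$. By Proposition \ref{another definition of odd super-dimension},
\[\mathrm{Ksdim}_1(A)=\max\{l\mid \dim(A_1^l)=r\},\]
and similarly for $\widehat{A}$, where by Lemma \ref{propertiesoflocal}(2) one has $\widehat{A}_1=\widehat{A}A_1$, and by Lemma \ref{apropertyofcompletion} its powers satisfy $(\widehat{A}_1)^l=\widehat{A_1^l}=\widehat{A}\,A_1^l$. Thus it suffices to prove, for each fixed $l$, the equivalence
\[\dim(A_1^l)=r \iff \dim\big((\widehat{A}_1)^l\big)=r.\]
Since $\dim(M)=\mathrm{Kdim}(A_0/\mathrm{Ann}_{A_0}(M))$ for any finitely generated $A_0$-module $M$, the key point is to compare the annihilators of $A_1^l$ and of its completion $(\widehat{A}_1)^l=\widehat{A_1^l}$ as $\widehat{A}_0$-module. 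The multiplication map $\mu^{(l)}:(A_1)^{\otimes l}\to A_1^l$ has image $A_1^l$ and kernel/cokernel controlled by finitely generated $A_0$-modules; completing this map and using exactness of completion (Proposition \ref{completionofNoetherian}), the annihilator $\mathrm{Ann}_{\widehat{A}_0}\big(\widehat{A_1^l}\big)$ equals $\widehat{A}_0\cdot\mathrm{Ann}_{A_0}(A_1^l)=\mathrm{Ann}_{A_0}(A_1^l)\widehat{A}_0$, i.e. the extension of the ideal $\mathrm{Ann}_{A_0}(A_1^l)$ along $A_0\to\widehat{A}_0$.

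\textbf{Reduction to a dimension comparison.} Set $\mathfrak{a}=\mathrm{Ann}_{A_0}(A_1^l)$. The preceding identification gives
\[\dim\big((\widehat{A}_1)^l\big)=\mathrm{Kdim}\big(\widehat{A}_0/\mathfrak{a}\widehat{A}_0\big)=\mathrm{Kdim}\big(\widehat{A_0/\mathfrak{a}}\big),\]
the last equality because completion commutes with the quotient $A_0/\mathfrak{a}$ (its $\mathfrak{m}_0$-adic completion being $\widehat{A}_0/\mathfrak{a}\widehat{A}_0$). Now $A_0/\mathfrak{a}$ is again a Noetherian local ring, so its dimension is preserved under completion by the same classical result used for the even part. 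Hence $\dim\big((\widehat{A}_1)^l\big)=\mathrm{Kdim}(A_0/\mathfrak{a})=\dim(A_1^l)$, and the desired equivalence holds for every $l$, yielding $\mathrm{Ksdim}_1(A)=\mathrm{Ksdim}_1(\widehat{A})$.

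\textbf{Main obstacle.} The delicate step is the annihilator computation $\mathrm{Ann}_{\widehat{A}_0}\big(\widehat{A_1^l}\big)=\mathfrak{a}\widehat{A}_0$. This requires more than flatness of $A_0\to\widehat{A}_0$: one must verify that completing the presentation of $A_1^l$ as a finitely generated $A_0$-module commutes with forming annihilators, which is where the exactness of the completion functor on finitely generated supermodules (Proposition \ref{completionofNoetherian}) and faithful flatness of completion for Noetherian local rings are both needed. Care is required because $A_1^l$ is an $A_0$-module, not an ideal of $A_0$, so one applies the annihilator-versus-completion comparison to the multiplication-by-generators maps rather than to $\mathfrak{a}$ directly.
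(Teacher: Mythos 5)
Your proof is correct and follows essentially the same route as the paper: both treat the even part via \cite[(24.D)]{mats} and reduce the odd part to the facts that annihilators commute with completion and that the Krull dimension of a Noetherian local quotient is preserved under completion, invoking Proposition \ref{another definition of odd super-dimension} to conclude. The only real difference is that the paper applies the annihilator computation to the individual homogeneous elements $y^I$ (via clause (2) of that proposition) rather than to the whole module $A_1^l$ (clause (3)), which sidesteps the intersection-of-annihilators point you flag as the main obstacle; that point is nonetheless standard for finitely generated modules under flat base change, so your variant also goes through.
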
 
\begin{proof}
Since $\widehat{A}_0$ coincides with the $\mathfrak{m}_0$-adic completion of $A_0$, the equality $\mathrm{Ksdim}_0(A)=\mathrm{Ksdim}_0(\widehat{A})$ follows by (24.D)(i), \cite{mats}.
Fix a generating set $y_1, \ldots , y_d$ of $A_0$-module $A_1$.

Let $b$ be a homogeneous element from $A$. Arguing as in Theorem \ref{thm:regular_local}, we obtain an exact sequence 
\[0\to\widehat{\mathrm{Ann}_{A_0}(b)}\to \widehat{A}_0\to \widehat{A},\]
where $\widehat{A}_0\to \widehat{A}$ is defined as $a\mapsto ab, a\in \widehat{A}_0$. 
In other words, we have
$\mathrm{Ann}_{\widehat{A}_0}(b)=\widehat{\mathrm{Ann}_{A_0}(b)}$. Furthermore, $\widehat{A}_0/\mathrm{Ann}_{\widehat{A}_0}(b)$ is isomorphic to the $\mathfrak{m}_0/\mathrm{Ann}_{A_0}(b)$-adic completion of  $A_0/\mathrm{Ann}_{A_0}(b)$, that in turn infers
\[\mathrm{Kdim}(\widehat{A}_0/\mathrm{Ann}_{\widehat{A}_0}(b))=\mathrm{Kdim}(A_0/\mathrm{Ann}_{A_0}(b)).\]
By Theorem 55, \cite{mats}, $\widehat{A}_1$ is generated by the same elements $y_1, \ldots, y_d$ as an $\widehat{A}_0$-module, and Proposition \ref{another definition of odd super-dimension} completes the proof.
\end{proof}
The set of super-dimension vectors can be lexicographically ordered as  $a|b < c|d$, provided $a< b$ or $a=b$ and then $c< d$. The following theorem superizes (24.D)($\mathrm{i}'$), \cite{mats}.
\begin{theorem}\label{super-dim of completion}
Let $A$ be a Noetherian super-ring with $\mathrm{Ksdim}_0(A)<\infty$. Let $I$ be a super-ideal of $A$ and let $\widehat{A}$ be its $I$-adic completion. Then 
\[\mathrm{Ksdim}(\widehat{A})=\sup_{\mathfrak{p}\in SSpec(A), I\subseteq\mathfrak{p}}\mathrm{Ksdim}(A_{\mathfrak{p}}).\] 
\end{theorem}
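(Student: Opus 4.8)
The plan is to reduce everything to two ingredients: the local invariance already established in Theorem \ref{an equality of super-dimensions}, and a ``local--global'' principle expressing $\mathrm{Ksdim}$ of a Noetherian super-ring as a lexicographic supremum of the super-dimensions of its localizations at maximal super-ideals. First I would dispose of the even component, which is essentially the commutative statement. Since $\widehat{A}_0$ is the $I_0$-adic completion of $A_0$ (Lemma \ref{coincidenceoftopologies}, Remark \ref{I_0-adic and I-adic}) and every prime of $A_0$ automatically contains the nilpotent ideal $A_1^2=(I_A)_0$, the prime super-ideals $\mathfrak{p}\supseteq I$ of $A$ correspond exactly to the primes $\mathfrak{p}_0\supseteq I_0$ of $A_0$; hence, by \cite[(24.D)(i$'$)]{mats} applied to $A_0$,
\[ \mathrm{Ksdim}_0(\widehat{A})=\mathrm{Kdim}(\widehat{A}_0)=\sup_{\mathfrak{p}\supseteq I}\mathrm{Ksdim}_0(A_{\mathfrak{p}})\le\mathrm{Kdim}(A_0)<\infty. \]
In particular $\widehat{A}$ is a Noetherian super-ring of finite even dimension (Proposition \ref{completionofNoetherian}), so Proposition \ref{another definition of odd super-dimension} applies to it. The real content is therefore the odd component, together with organizing the two sides of the claimed identity around the same index set of maximal super-ideals.

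Next I would record the local--global principle: for a Noetherian super-ring $R$ with $\mathrm{Kdim}(R_0)<\infty$, the super-dimension $\mathrm{Ksdim}(R)$ equals the lexicographic supremum of $\mathrm{Ksdim}(R_{\mathfrak{m}})$ taken over the maximal super-ideals $\mathfrak{m}$ of $R$. Writing $r=\mathrm{Kdim}(R_0)$, Proposition \ref{another definition of odd super-dimension} gives $\mathrm{Ksdim}_1(R)=\max\{l\mid \mathrm{Kdim}(R_0/\mathrm{Ann}_{R_0}(R_1^l))=r\}$, and localizing the finitely generated module $R_1^l$ shows that $\mathrm{Ksdim}_1(R_{\mathfrak{m}})\ge l$ while $\mathrm{Ksdim}_0(R_{\mathfrak{m}})=r$ holds precisely when $\dim\bigl((R_0/\mathrm{Ann}_{R_0}(R_1^l))_{\mathfrak{m}_0}\bigr)=\mathrm{ht}(\mathfrak{m}_0)=r$. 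Since a longest prime chain in $R_0/\mathrm{Ann}_{R_0}(R_1^l)$ of length $r$ must terminate at a maximal ideal, necessarily of height $r$, the equality $\mathrm{Kdim}(R_0/\mathrm{Ann}_{R_0}(R_1^l))=r$ is equivalent to the existence of such an $\mathfrak{m}$, which yields the principle.

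Then I would pin down the index sets and prove the termwise equality. The ideal $I_0\widehat{A}_0$ lies in the Jacobson radical of $\widehat{A}_0$, so via $\widehat{A}/\widehat{I}\simeq A/I$ the maximal super-ideals $\widehat{\mathfrak{m}}$ of $\widehat{A}$ correspond bijectively to the maximal super-ideals $\mathfrak{m}\supseteq I$ of $A$. On the right-hand side I would observe that the lexicographic supremum over all primes $\mathfrak{p}\supseteq I$ is attained at maximal ones: if $\mathfrak{p}\subseteq\mathfrak{m}$ and $\mathrm{Ksdim}_0(A_{\mathfrak{p}})=\mathrm{Ksdim}_0(A_{\mathfrak{m}})$, then $\mathrm{ht}(\mathfrak{p}_0)=\mathrm{ht}(\mathfrak{m}_0)$ forces $\mathfrak{p}_0=\mathfrak{m}_0$, hence $\mathfrak{p}=\mathfrak{m}$, so replacing $\mathfrak{p}$ by a maximal ideal above it never decreases $\mathrm{Ksdim}$ lexicographically. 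Thus both sides are indexed by the same maximal super-ideals $\mathfrak{m}\supseteq I$, and it remains to check $\mathrm{Ksdim}\bigl((\widehat{A})_{\widehat{\mathfrak{m}}}\bigr)=\mathrm{Ksdim}(A_{\mathfrak{m}})$ for each. I would apply Theorem \ref{an equality of super-dimensions} to the two local super-rings $A_{\mathfrak{m}}$ and $(\widehat{A})_{\widehat{\mathfrak{m}}}$, reducing the claim to an isomorphism of their completions; componentwise (Remark \ref{I_0-adic and I-adic}) this is the standard fact that $\varprojlim_n A_0/\mathfrak{m}_0^n$ is computed equally from $A_0$ and from $\widehat{A}_0$, because $\mathfrak{m}_0^n\supseteq I_0^n$ gives $\widehat{A}_0/\mathfrak{m}_0^n\widehat{A}_0\simeq A_0/\mathfrak{m}_0^n$, and likewise for the odd component $A_1$. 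Combining this with the local--global principle for $\widehat{A}$ and the attainment observation completes the proof.

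The main obstacle I anticipate is the odd half of the local--global principle: unlike ordinary Krull dimension, the odd Krull dimension is genuinely non-monotone under quotients (Example \ref{strangeodd}), so no naive inequality is available and one must run everything through the module-theoretic characterization of Proposition \ref{another definition of odd super-dimension}, checking carefully that $\mathrm{Ann}_{R_0}(R_1^l)$ and $\dim(R_1^l)$ commute with localization and that the even and odd components of the lexicographic supremum are controlled simultaneously. A secondary delicate point is the identification of the two completions in the termwise step, which must be verified on the odd component $\widehat{A}_1$ as well as on $\widehat{A}_0$; once the completions are matched, Theorem \ref{an equality of super-dimensions} does the rest.
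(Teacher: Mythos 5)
Your proposal is correct, and while it shares the paper's three main ingredients --- Matsumura (24.D)($\mathrm{i}'$) for the even part, Proposition \ref{another definition of odd super-dimension} for controlling the odd part, and the analytic isomorphism between $A_{\mathfrak{p}}$ and $(\widehat{A})_{\widehat{\mathfrak{p}}}$ combined with Theorem \ref{an equality of super-dimensions} --- it organizes them along a genuinely different route. The paper proves the two lexicographic inequalities asymmetrically: for $\mathrm{Ksdim}(\widehat{A})\le\sup$ it takes a maximal system of odd parameters $y_{i_1},\dots,y_{i_s}$ of $\widehat{A}$ drawn from $A_1$, sets $J=\mathrm{Ann}_{A_0}(y_{i_1}\cdots y_{i_s})$, identifies $\widehat{A}_0/\mathrm{Ann}_{\widehat{A}_0}(y_{i_1}\cdots y_{i_s})$ with a completion of $A_0/J$, and then locates a prime $\mathfrak{q}'\supseteq I_0+J$ of maximal height at which the $y_{i_j}$ survive as odd parameters; the reverse inequality uses the analytic isomorphism plus Lemma \ref{a partial case}. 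You instead prove a local--global principle $\mathrm{Ksdim}(R)=\sup_{\mathfrak{m}}\mathrm{Ksdim}(R_{\mathfrak{m}})$ over maximal super-ideals, apply it to $\widehat{A}$, identify the maximal super-ideals of $\widehat{A}$ with those of $A$ containing $I$ (using that $I\widehat{A}$ lies in the Jacobson radical), observe that the right-hand supremum is likewise attained at maximal super-ideals, and finish with the termwise equality $\mathrm{Ksdim}\bigl((\widehat{A})_{\widehat{\mathfrak{m}}}\bigr)=\mathrm{Ksdim}(A_{\mathfrak{m}})$, which is exactly the analytic-isomorphism step. Your route is more modular: the local--global principle (whose two halves are, respectively, Lemma \ref{a partial case} restricted to maximal ideals and essentially the same chain-lifting argument the paper runs with $J$) is a reusable statement of independent interest, and once it is available the completion enters only through the already-local Theorem \ref{an equality of super-dimensions}, so the juggling with $J$ and $\mathfrak{q}'$ disappears. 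The price is the extra bookkeeping of matching maximal super-ideals and checking $\widehat{\mathfrak{m}}^n=\mathfrak{m}^n\widehat{A}$ so that $\widehat{A}/\widehat{\mathfrak{m}}^n\simeq A/\mathfrak{m}^n$; all of these checks go through as you indicate.
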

\begin{proof}
Using Lemma \ref{coincidenceoftopologies} and (24.D)($\mathrm{i}'$), \cite{mats}, we have 
\[r=\mathrm{Ksdim}_0(\widehat{A})=\sup_{\mathfrak{q}\in Spec(A_0), I_0\subseteq\mathfrak{q}}\mathrm{Kdim}((A_0)_{\mathfrak{q}})=\sup_{\mathfrak{p}\in SSpec(A), I\subseteq\mathfrak{p}}\mathrm{Ksdim}_0(A_{\mathfrak{p}}).\]
As above, $\widehat{A}_0$-module $\widehat{A}_1$ is generated by some $y_1, \ldots, y_d\in A_1$. By Proposition \ref{another definition of odd super-dimension} there is a system of odd parameters $y_{i_1}, \ldots , y_{i_s}$ of $\widehat{A}$ of largest cardinality $s=\mathrm{Ksdim}_1(\widehat{A})$.  

Let $J$ denote the ideal $\mathrm{Ann}_{A_0}(y_{i_1}\ldots y_{i_s})$. Arguing as in Theorem \ref{an equality of super-dimensions}, we have
\[\widehat{A}_0/\mathrm{Ann}_{\widehat{A}_0}(y_{i_1}\ldots y_{i_s})\simeq \widehat{A_0/J},\]
where $\widehat{A_0/J}$ denotes the completion of $A_0/J$ in the $(I+J)/J$-adic topology. Thus 
\[\mathrm{Kdim}(\widehat{A}_0)=\mathrm{Kdim}(\widehat{A}_0/\mathrm{Ann}_{\widehat{A}_0}(y_{i_1}\ldots y_{i_s}))=\mathrm{Kdim}(\widehat{A_0/J}).\]

Using (24.D)($\mathrm{i}'$), \cite{mats}, one sees
\[r=\sup_{\mathfrak{q}\in Spec(A_0), I_0\subseteq\mathfrak{q}}\mathrm{Kdim}((A_0)_{\mathfrak{q}})=
\sup_{\mathfrak{q}\in Spec(A_0), I_0+J\subseteq\mathfrak{q}}\mathrm{Kdim}((A_0/J)_{\mathfrak{q}/J}).\]
There are two prime ideals $\mathfrak{q}$ and $\mathfrak{q}'$ such that $I_0\subseteq\mathfrak{q}, I_0+J\subseteq\mathfrak{q}'$, and
\[r=\mathrm{Kdim}((A_0)_{\mathfrak{q}})=\mathrm{Kdim}((A_0/J)_{\mathfrak{q}'/J}).\]
On the other hand, we have
\[\mathrm{Kdim}((A_0)_{\mathfrak{q}})\geq \mathrm{Kdim}((A_0)_{\mathfrak{q}'})\geq
\mathrm{Kdim}((A_0)_{\mathfrak{q}'}/J_{\mathfrak{q}'})=\mathrm{Kdim}((A_0/J)_{\mathfrak{q}'/J}),\]
hence
\[r=\mathrm{Kdim}((A_0)_{\mathfrak{q}})= \mathrm{Kdim}((A_0)_{\mathfrak{q}'})=\mathrm{Kdim}((A_0)_{\mathfrak{q}'}/J_{\mathfrak{q}'})\]
and the elements $y_{i_1}, \ldots, y_{i_s}$ form a system of odd parameters in $(A_0)_{\mathfrak{q}'}$. The latter infers the inequality
\[\mathrm{Ksdim}_1(\widehat{A})\leq\mathrm{Ksdim}_1((A)_{\mathfrak{q}'+A_1}),\]
and therefore, the inequality
\[\mathrm{Ksdim}(\widehat{A})\leq\sup_{\mathfrak{p}\in SSpec(A), I\subseteq\mathfrak{p}}\mathrm{Ksdim}(A_{\mathfrak{p}}).\]
Let $\mathfrak{p}$ be a prime super-ideal such that $I\subseteq\mathfrak{p}$ and $\mathrm{Ksdim}_0(\widehat{A})=\mathrm{Ksdim}_0(A_{\mathfrak{p}})$. Arguing as in (24D), \cite{mats}, one can show that $A_{\mathfrak{p}}$ and $\widehat{A}_{\widehat{\mathfrak{p}}}$ are {\it analytically isomorphic}, that is their completions (as local super-rings) are isomorphic. Combining Theorem \ref{an equality of super-dimensions} and Lemma \ref{a partial case}, one obtains  
\[\mathrm{Ksdim}(A_{\mathfrak{p}})=\mathrm{Ksdim}(\widehat{A}_{\widehat{\mathfrak{p}}})\leq\mathrm{Ksdim}(\widehat{A}),\]
hence
\[\sup_{\mathfrak{p}\in SSpec(A), I\subseteq\mathfrak{p}}\mathrm{Ksdim}(A_{\mathfrak{p}})\leq\mathrm{Ksdim}(\widehat{A}).\] 
Theorem is proven.
\end{proof}

\subsection{K\"{a}hler superdifferentials and regularity}

\begin{lm}\label{cotangent}
Let $B$ be a local $K$-superalgebra with the maximal super-ideal $\mathfrak{m}$.
Assume that its residue field $B/\mathfrak{m}=K(B)$ is a separably generated extension of $K$. Then  
\[\Omega_{B/K}\otimes_B K(B)\simeq \mathfrak{m}/\mathfrak{m}^2\oplus \Omega_{K(B)/K}. \]
In particular, if $B$-supermodule $\Omega_{B/K}$ is finitely generated, then a minimal system of generators of $\Omega_{B/K}$ consists of 
\[\mathrm{dim}_{K(B)}((\mathfrak{m}/\mathfrak{m}^2)_0)+\mathrm{tr.deg}_K(K(B))\] 
even generators and 
\[\mathrm{dim}_{K(B)}((\mathfrak{m}/\mathfrak{m}^2)_1)\]
odd generators respectively.
\end{lm}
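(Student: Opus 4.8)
The plan is to read off the isomorphism from the Second Exact Sequence. Applying Proposition \ref{secondexactsequence} with $A=K$, with the super-ideal $\mathfrak{m}$ in place of $I$, and with $C=B/\mathfrak{m}=K(B)$, I obtain the right-exact sequence of $K(B)$-supermodules
\[\mathfrak{m}/\mathfrak{m}^2 \xrightarrow{\ \delta\ } \Omega_{B/K}\otimes_B K(B) \longrightarrow \Omega_{K(B)/K} \longrightarrow 0 .\]
Everything then reduces to proving that $\delta$ is a \emph{split} monomorphism: a left inverse forces $\delta$ to be injective and splits the resulting short exact sequence, yielding the claimed decomposition $\Omega_{B/K}\otimes_B K(B)\simeq \mathfrak{m}/\mathfrak{m}^2\oplus\Omega_{K(B)/K}$ as $K(B)$-supermodules.

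By the splitting criterion contained in Proposition \ref{secondexactsequence}, the map $\delta$ admits a left inverse precisely when the square-zero extension of $K$-superalgebras
\[0 \longrightarrow \mathfrak{m}/\mathfrak{m}^2 \longrightarrow B/\mathfrak{m}^2 \longrightarrow K(B) \longrightarrow 0\]
is trivial, i.e.\ admits a $K$-superalgebra section $K(B)\to B/\mathfrak{m}^2$. The key simplifying observation, special to the super setting, is that $K(B)$ is purely even: since $\mathfrak{m}\supseteq I_B\supseteq B_1$, the residue superfield has no odd part, so any graded section lands in $(B/\mathfrak{m}^2)_0$. Using $\mathfrak{m}=\mathfrak{m}_0\oplus B_1$ and $(\mathfrak{m}^2)_0=\mathfrak{m}_0^2+B_1^2$ (compare \eqref{eq:m/m^2}), I would identify the even part of the above extension with the ordinary square-zero extension
\[0 \longrightarrow \overline{\mathfrak{m}}/\overline{\mathfrak{m}}^2 \longrightarrow \overline{B}/\overline{\mathfrak{m}}^2 \longrightarrow K(B) \longrightarrow 0\]
attached to the commutative local ring $\overline{B}=B_0/B_1^2$ with maximal ideal $\overline{\mathfrak{m}}$. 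Thus producing the graded section is equivalent to producing a classical $K$-algebra section of this even extension, which, precomposed with the inclusion $(B/\mathfrak{m}^2)_0\hookrightarrow B/\mathfrak{m}^2$, furnishes the section I need.

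For this even section I would invoke the separable-generation hypothesis exactly as in the purely even theory. Since $K(B)/K$ is separably generated it is formally smooth over $K$, so the identity map of $K(B)$ lifts along the square-zero surjection $\overline{B}/\overline{\mathfrak{m}}^2\to K(B)$; the lift is the desired section (the coefficient-field argument, cf.\ \cite{mats}). Granting this, $\delta$ is split injective and the main isomorphism follows. I expect this step --- the construction of the section --- to be the crux of the argument and essentially the only place where the hypothesis is genuinely used; the reduction to the commutative local ring $\overline{B}$ is precisely what makes it tractable, since it places the lifting problem in the classical setting where separable generation supplies formal smoothness.

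Finally, the \emph{in particular} clause will follow by taking parities in the isomorphism and applying the graded Nakayama Lemma to the finitely generated $B$-supermodule $\Omega_{B/K}$: the minimal number of even (respectively, odd) generators equals the $K(B)$-dimension of the even (respectively, odd) component of $\Omega_{B/K}\otimes_B K(B)$. Because $K(B)$ is purely even, $\Omega_{K(B)/K}$ is purely even of dimension $\mathrm{tr.deg}_K(K(B))$ for a separably generated extension (\cite{mats}); hence the even component contributes $\dim_{K(B)}((\mathfrak{m}/\mathfrak{m}^2)_0)+\mathrm{tr.deg}_K(K(B))$ and the odd component contributes $\dim_{K(B)}((\mathfrak{m}/\mathfrak{m}^2)_1)$, as asserted.
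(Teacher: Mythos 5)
Your proposal is correct and follows essentially the same route as the paper: both split the square-zero extension $0\to\mathfrak{m}/\mathfrak{m}^2\to B/\mathfrak{m}^2\to K(B)\to 0$ by producing a $K$-algebra section into the purely even local ring $(B/\mathfrak{m}^2)_0=\overline{B}/\overline{\mathfrak{m}}^2$ using separable generation (the paper cites the coefficient-field theorem of \cite[p.205]{mats}, which is precisely the formal-smoothness lifting you invoke), and then apply the splitting criterion of Proposition \ref{secondexactsequence} together with $\dim_{K(B)}\Omega_{K(B)/K}=\mathrm{tr.deg}_K(K(B))$ for the dimension count.
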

\begin{proof}
Applying \cite[p.205]{mats} to a complete local ring $(B/\mathfrak{m}^2)_0$, one can choose its field of representatives, say $L\simeq K(B)$, so that the exact sequence 
\[0\to\mathfrak{m}/\mathfrak{m}^2\to B/\mathfrak{m}^2\to K(B)\to 0\]
is split (on the right). Proposition \ref{secondexactsequence} and \cite[Theorem 59]{mats} conclude the proof. 
\end{proof}
\begin{lm}\label{acriteria}
Let $A$ be a Noetherian local superdomain with residue field $K$ and superfield of fractions $F$.
Let $M$ be a finite $A$-supermodule such that the $F$-supermodule $M_{I_A}=M\otimes_A F$ is free of rank $p|q$. If $\mathrm{sdim}_K M\otimes_A K=p|q$, then $M$ is a free $A$-supermodule of rank $p|q$.  
\end{lm}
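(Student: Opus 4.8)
The plan is to adapt to the super setting the classical argument that a finite module over a Noetherian local domain whose generic rank equals its special rank is free. Throughout write $\mathfrak{m}$ for the maximal super-ideal of $A$, so that $K=A/\mathfrak{m}$, and recall that $F=A_{I_A}=SQ(A)$.

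First I would produce a minimal free cover. Lifting a homogeneous $K$-basis of the super-vector space $M\otimes_A K=M/\mathfrak{m}M$, which has super-dimension $p|q$, to homogeneous elements of $M$ ($p$ even lifts and $q$ odd lifts) and invoking the super version of Nakayama's Lemma (applicable since $A$ is local and $M$ is finite), I obtain a graded surjection $\pi\colon L\to M$, where $L=A^{p|q}$ is the free $A$-supermodule on $p$ even and $q$ odd generators, and where $\pi\otimes_A K$ is an isomorphism by construction. Setting $N=\ker\pi$, the ring $A$ being Noetherian makes $N$ finite, and I have a short exact sequence $0\to N\to L\to M\to 0$ of $A$-supermodules.

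Next I would pass to the generic fibre. Localization at the prime super-ideal $I_A$ is exact, so tensoring with $F$ gives an exact sequence $0\to N_{I_A}\to F^{p|q}\to M_{I_A}\to 0$. By hypothesis $M_{I_A}$ is $F$-free of rank $p|q$, so the right-hand arrow is a surjection between free $F$-supermodules of equal rank; regarding it as an $F_0$-linear surjective endomorphism of the finitely generated $F_0$-module underlying $F^{p|q}$ (finitely generated because $F_1$ is finite over $F_0$), the standard fact that a surjective endomorphism of a finite module is an isomorphism forces it to be bijective, whence $N_{I_A}=0$. This part requires only that $A$ be a superdomain.

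The final step, deducing $N=0$ from $N_{I_A}=0$, is where I expect the real obstacle to lie, and it is genuinely the heart of the matter. The composite $N\hookrightarrow L\to L_{I_A}=F^{p|q}$ factors through $N_{I_A}=0$, so it suffices that $L=A^{p|q}$ inject into $F^{p|q}$, i.e. that the canonical map $A\to A_{I_A}$ be injective. This is exactly the \emph{strong} superdomain condition recorded in Section 1 (for $s\in A_0\setminus A_1^2$ and $a\in A$, $sa=0\Rightarrow a=0$); granting it, $N=0$, so $\pi$ is an isomorphism and $M\simeq A^{p|q}$ is free of rank $p|q$. I would therefore make sure to invoke the strong property precisely here — it is indispensable, since for a local superdomain that is not strong the conclusion fails. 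For instance, with $A=k[[t]][\xi]/(t\xi)$ (with $\xi$ odd) the cyclic supermodule $M=\overline{A}=A/I_A$ has $M_{I_A}\simeq k((t))$ free of rank $1|0$ and $M/\mathfrak{m}M\simeq k$ of super-dimension $1|0$, yet $M$ has no odd part and so is not $A$-free. Thus the entire force of the lemma is concentrated in the injectivity $A\hookrightarrow F$.
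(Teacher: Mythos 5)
Your argument is essentially the paper's own: the published proof is a one-line reference ("since $F$ is a flat $A$-supermodule, superize the proof of Lemma II.8.9 of Hartshorne"), and your three steps --- the Nakayama cover $L=A^{p|q}\to M$, exactness of localization giving $N_{I_A}=0$, and injectivity of $L\to L_{I_A}$ giving $N=0$ --- are exactly that superization carried out. Your remark about the last step is correct and is not addressed in the paper: concluding $N=0$ from $N_{I_A}=0$ requires the canonical map $A\to A_{I_A}$ to be injective, i.e.\ that $A$ be a \emph{strong} superdomain, and your example $A=k[[t]][\xi]/(t\xi)$ with $M=A/I_A$ genuinely refutes the lemma as literally stated (here $M\otimes_AK$ and $M\otimes_AF$ both have rank $1|0$, yet $M$ is purely even while $A_1\neq0$). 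The paper applies the lemma only in the proof of Theorem \ref{regularity}, where $B$ is regular and hence a strong superdomain by Proposition \ref{prop:regular_local}, so the application is unharmed; but the hypothesis ``superdomain'' in the statement should read ``strong superdomain,'' and your proof is the correct one under that hypothesis.
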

\begin{proof}
Since $F$ is a flat $A$-supermodule (cf. \cite{maszub1}, Lemma 1.2(i)), one can easily superize the proof of Lemma II.8.9, \cite{hart}. 
\end{proof}
\begin{theorem}\label{regularity}
Assume that $B$ is a $K$-superalgebra as above. Assume also that $K$ is perfect and $B$ is a localization of a finitely generated $K$-superalgebra. Then $B$ is regular if and only if $\Omega_{B/K}$ is a free $B$-supermodule of rank equal to $\mathrm{Ksdim}(B)+\mathrm{tr.deg}_K(K(B))|0$.
\end{theorem}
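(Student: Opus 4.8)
The plan is to prove both implications by comparing three invariants of $\Omega_{B/K}$: its minimal number of generators, the super-dimension of its special fibre, and that of its generic fibre. Throughout set $\mathrm{Ksdim}(B)=r\mid s$, write $L=K(B)$ for the residue field and $n=\mathrm{tr.deg}_K(L)$, and recall $r=\mathrm{Kdim}(B_0)=\dim\overline{B}$. Since $B$ is essentially of finite type over $K$, the $B$-supermodule $\Omega_{B/K}$ is finitely generated (Remark \ref{finiteness} together with the localization part of Lemma \ref{baseextension}), so Lemma \ref{cotangent} applies: a minimal system of generators of $\Omega_{B/K}$ consists of $\dim_L((\mathfrak{m}/\mathfrak{m}^2)_0)+n$ even and $\dim_L((\mathfrak{m}/\mathfrak{m}^2)_1)$ odd elements, that is,
\[ \mathrm{sdim}_L(\Omega_{B/K}\otimes_B L)=\big(\dim_L((\mathfrak{m}/\mathfrak{m}^2)_0)+n\big)\mid\dim_L((\mathfrak{m}/\mathfrak{m}^2)_1). \]

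The ``if'' direction is then immediate. If $\Omega_{B/K}$ is free of rank $(r+n)\mid s$, its minimal number of generators equals $(r+n)\mid s$, so the displayed formula forces $\dim_L((\mathfrak{m}/\mathfrak{m}^2)_0)=r$ and $\dim_L((\mathfrak{m}/\mathfrak{m}^2)_1)=s$; hence $\mathrm{sdim}_K(\mathfrak{m}/\mathfrak{m}^2)=r\mid s=\mathrm{Ksdim}(B)$, which is exactly the definition of regularity (recall from Lemma \ref{lem:odd_generators} and the even inequality $\dim_L((\mathfrak{m}/\mathfrak{m}^2)_0)\ge r$ that these dimensions always dominate $r$ and $s$). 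This half uses only the cotangent lemma and the definition of regularity.

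For the ``only if'' direction assume $B$ is regular. Then $\overline{B}$ is a regular local ring, hence a domain, so $I_B$ is prime, $B$ is a superdomain, and its superfield of fractions $F=SQ(B)=B_{I_B}$ is defined. By regularity the special fibre has $\mathrm{sdim}_L(\Omega_{B/K}\otimes_B L)=(r+n)\mid s$. I would now compute the generic fibre $\Omega_{B/K}\otimes_B F\simeq\Omega_{F/K}$ (Lemma \ref{baseextension}) and invoke the freeness criterion Lemma \ref{acriteria}. By Corollary \ref{cor:localization_of_regular_local} the superfield $F$ is regular; since $\mathrm{Ksdim}_0(F)=\mathrm{Kdim}(F_0)=0$ and $I_F$ is nilpotent, $F$ is complete, and as $F_0$ contains $K$, Corollary \ref{cor:completion} identifies $F$ with the polynomial superalgebra $\wedge_{\overline{F}}(Y_1,\dots,Y_{s'})$ over the field $\overline{F}=\mathrm{Frac}(\overline{B})$, where $s'=\mathrm{Ksdim}_1(F)$. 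Writing $B=T_{\mathfrak{p}}$ for a finitely generated superdomain $T$ (so $\mathrm{nil}(T)=I_T$) and applying Lemma \ref{whensuperdimensiondoesnotchange} both to $T\to B$ and to $T\to SQ(T)=F$ gives $s'=\mathrm{Ksdim}_1(F)=\mathrm{Ksdim}_1(B)=s$. The First Exact Sequence (Proposition \ref{firstexactsequence}) for $K\to\overline{F}\to F$ splits, because the inclusion $\overline{F}\hookrightarrow F$ admits the retraction $F\twoheadrightarrow\overline{F}$, whence
\[ \Omega_{F/K}\simeq(\Omega_{\overline{F}/K}\otimes_{\overline{F}}F)\oplus\Omega_{F/\overline{F}}. \]
As $K$ is perfect and $\overline{F}$ is separably generated over $K$, $\Omega_{\overline{F}/K}$ is $\overline{F}$-free of rank $(\mathrm{tr.deg}_K\overline{F})\mid 0$, and the dimension formula for a localization of a finitely generated domain over a field (see \cite{mats}) gives $\mathrm{tr.deg}_K\overline{F}=\dim\overline{B}+\mathrm{tr.deg}_K L=r+n$; by Remark \ref{anexample} the summand $\Omega_{F/\overline{F}}$ is $F$-free on $d_0Y_1,\dots,d_0Y_{s'}$, of rank $0\mid s'$. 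Hence $\Omega_{F/K}$ is $F$-free of rank $(r+n)\mid s$.

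Both the special and the generic fibre of $\Omega_{B/K}$ thus have super-dimension $(r+n)\mid s$, so Lemma \ref{acriteria} yields that $\Omega_{B/K}$ is a free $B$-supermodule of rank $(r+n)\mid s=\mathrm{Ksdim}(B)+\mathrm{tr.deg}_K(K(B))\mid 0$, as required. I expect the main obstacle to be the generic-fibre step: recognizing that localizing a regular superdomain at $I_B$ produces a \emph{complete} regular superfield isomorphic to an exterior algebra over the function field $\overline{F}$, and then correctly tracking the odd rank, i.e. verifying $s'=s$ through invariance of the odd Krull dimension under localization (Lemma \ref{whensuperdimensiondoesnotchange}). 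The even part is governed by the classical dimension formula exactly as in the non-super situation, and the ``if'' direction requires essentially no work beyond Lemma \ref{cotangent}.
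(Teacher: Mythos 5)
Your proposal is correct and follows essentially the same route as the paper: the ``if'' direction via Lemma \ref{cotangent} and the definition of regularity, and the ``only if'' direction by passing to the superfield of fractions $F=SQ(B)$, identifying it as $K(F)[Y_1,\dots,Y_s]$ via Corollaries \ref{cor:localization_of_regular_local} and \ref{cor:completion}, splitting the first exact sequence to compute $\Omega_{F/K}$, and concluding with Lemma \ref{acriteria}. The only cosmetic differences are that you invoke Lemma \ref{whensuperdimensiondoesnotchange} directly for $\mathrm{Ksdim}_1(F)=\mathrm{Ksdim}_1(B)$ (the paper routes this through the proof of Proposition \ref{dimension}, which rests on the same lemma) and compute $\mathrm{tr.deg}_K\overline{F}$ by the classical dimension formula where the paper base-changes $\Omega_{\overline{B}/K}$.
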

\begin{proof}
First of all, Remark \ref{finiteness} and Lemma \ref{baseextension} imply that $\Omega_{B/K}$ is a finitely generated $B$-supermodule.

Set $\mathrm{sdim}_{K(B)}(\mathfrak{m}/\mathfrak{m}^2)=m|n$ and $\mathrm{Ksdim}(B)+0|\mathrm{tr.deg}_K(K(B))=p|q$. Note that $K(B)$ is a finitely generated extension of $K$, hence it is also separably generated over $K$ (see \cite{mats}, p.194). If $\Omega_{B/K}$ is free of rank $p|q$, then Lemma \ref{cotangent} implies $\mathrm{Ksdim}(B)=m|n$, whence $B$ is regular.

Conversely, assume that $B$ is regular, i.e. the Krull superdimension of $B$ is equal to $m|n$. Again, by Lemma \ref{cotangent} the $K(B)$-superspace $\Omega_{B/K}\otimes_B K(B)$ has superdimension $p|q$. 

Proposition \ref{prop:regular_local} implies that $B$ is a strong superdomain. 
The superfield $F=SQ(B)$ is a local Noetherian super-ring with the maximal super-ideal $I_F$. Moreover, by Corollary \ref{cor:localization_of_regular_local} the super-ring $F$ is regular.
The residue field of $F$, $K(F)$, is isomorphic to the quotient field of $\overline{B}$.

Further, the maximal super-ideal $I_F$ is nilpotent, hence $F$ is complete. Since
$\mathrm{Ksdim}_1(F)=\mathrm{Ksdim}_1(B)=n$ (see the proof of Proposition \ref{dimension} below), Corollary \ref{cor:completion} shows that $F\simeq K(F)[Y_1, \ldots, Y_n]$. Observe that any $K$-superderivation of $K(F)$ into a $F$-supermodule $T$ can be extended to a $K$-superderivation $F\to T$. Therefore, Proposition \ref{firstexactsequence} infers
\[\Omega_{F/K}\simeq (\Omega_{K(F)/K}\otimes_{K(F)} F)\oplus\Omega_{F/K(F)}.\]
By Remark \ref{anexample}, $\Omega_{F/K(F)}$ is a free $F$-supermodule of rank $0|n$. On the other hand, Lemma \ref{baseextension} implies 
\[\Omega_{K(F)/K}\simeq\Omega_{\overline{B}/K}\otimes_{\overline{B}} K(F).\]
Since $\Omega_{\overline{B}/K}$ is a free $\overline{B}$-module of rank $m+\mathrm{tr.deg}_K(K(B))$ (see \cite[Exercise II.8.1(b)]{hart}), $\Omega_{F/K}$ is a free $F$-supermodule of rank $p|q$.
Again, by Lemma \ref{baseextension} there is
\[\Omega_{B/K}\otimes_B F\simeq\Omega_{F/K},\]
and Lemma \ref{acriteria} concludes the proof.
\end{proof}

\section{Dimension theory of superschemes}

\subsection{Super-dimension of a superscheme}

From now on all superschemes are assumed to be of finite type over a field $K$, unless otherwise stated.
\begin{pr}\label{dimension}
Let $X$ be an irreducible superscheme. If $U$ and $V$ are non-empty open affine super-subschemes of $X$, then $\mathrm{Ksdim}(\mathcal{O}(U))=\mathrm{Ksdim}(\mathcal{O}(V))$.
\end{pr}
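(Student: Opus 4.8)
The plan is to reduce the statement about two open affine super-subschemes $U$ and $V$ of an irreducible superscheme $X$ to a statement that can be checked locally, using the fact that the Krull super-dimension is determined by even and odd components separately. Writing $\mathrm{Ksdim}(\mathcal{O}(U)) = r_U \mid s_U$ and similarly for $V$, I would treat the even and odd parts independently.

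For the even part, observe that $\mathrm{Ksdim}_0(\mathcal{O}(U)) = \mathrm{Kdim}(\mathcal{O}(U)_0) = \mathrm{Kdim}(\overline{\mathcal{O}(U)})$, and that $\overline{\mathcal{O}(U)}$ is the coordinate ring of the corresponding open affine subscheme of the integral scheme $X_{res}$. Since $X$ is irreducible (hence so is $X_{res}$), and we are working with finite type over $K$, the classical fact that the dimension of an irreducible variety equals the transcendence degree of its function field over $K$ gives $\mathrm{Kdim}(\overline{\mathcal{O}(U)}) = \mathrm{tr.deg}_K SK(X)_{res} = \mathrm{Kdim}(\overline{\mathcal{O}(V)})$, independent of the chosen open affine. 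This settles $r_U = r_V$.

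For the odd part, I would use that $X$ is irreducible, so by Lemma~\ref{genericpoint} all these open affines share the generic point $\xi$ with $\mathcal{O}_\xi = SK(X)$ a superfield, and $\mathrm{nil}(\mathcal{O}(U))$ is a prime super-ideal (its underlying space is irreducible). This is exactly the hypothesis needed to apply Lemma~\ref{whensuperdimensiondoesnotchange}: localizing $\mathcal{O}(U)$ at the multiplicative set $S$ carrying $U$ into a smaller open affine $W \subseteq U \cap V$ of the form $D(f)$ does not change the odd Krull dimension, so $\mathrm{Ksdim}_1(\mathcal{O}(U)) = \mathrm{Ksdim}_1(\mathcal{O}(W))$. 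Since any two non-empty opens of an irreducible space meet, one can choose such a common refinement $W$ of the principal-open form inside $U \cap V$, giving $\mathrm{Ksdim}_1(\mathcal{O}(U)) = \mathrm{Ksdim}_1(\mathcal{O}(W)) = \mathrm{Ksdim}_1(\mathcal{O}(V))$.

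The main obstacle I anticipate is matching up the localizations correctly: $U \cap V$ need not itself be a principal open $D(f)$ inside $U$, so to invoke Lemma~\ref{whensuperdimensiondoesnotchange} (which is phrased for a multiplicative subset $S \subseteq A_0$, i.e. a localization $S^{-1}A$) I would want to pick a point and a principal open neighborhood $D(f) \subseteq U \cap V$ that is simultaneously principal in both $U$ and $V$; this is standard for schemes but needs the identification $\mathcal{O}(U)_f \simeq \mathcal{O}(D(f)) \simeq \mathcal{O}(V)_g$ in the super setting, which is available since $D(f) \simeq SSpec\,\mathcal{O}(U)_f$. Once the common principal open is in place, both equalities follow by applying the lemma to the respective localization maps, and the even-part identity glues to give the full equality $\mathrm{Ksdim}(\mathcal{O}(U)) = \mathrm{Ksdim}(\mathcal{O}(V))$.
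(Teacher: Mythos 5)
Your proposal is correct and follows essentially the same route as the paper: the even part via the invariance of $\mathrm{Kdim}$ modulo nilpotents and the transcendence-degree characterization of dimension for irreducible finite-type schemes, and the odd part via Lemma~\ref{whensuperdimensiondoesnotchange} applied after reducing to principal opens (the paper handles that reduction by first assuming $U\subseteq V$ and invoking Lemma~3.5 of \cite{maszub1}, while you use the standard common-principal-open refinement inside $U\cap V$, which works equally well since $D(f)\simeq SSpec\ \mathcal{O}(U)_f$). The only cosmetic slip is calling $X_{res}$ integral when $X$ is merely irreducible; one should pass to its reduction, which changes nothing for Krull dimension.
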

\begin{proof}
Since $U^e\cap V^e$ is not empty, all we need is to consider the case $U\subseteq V$. Set $U\simeq SSpec \ A$ and $V\simeq SSpec \ B$. Then the natural open immersion $U\to V$ coincides with 
$SSpec \ \phi$ for some super-ring morphism $\phi : B\to A$ . By Lemma 3.5, \cite{maszub1}, there are
$b_1, \ldots, b_t\in B_0$ such that $\sum_{1\leq i\leq t}A_0\phi(b_i)=A_0$ and the induced morphisms
$B_{b_i}\to A_{\phi(b_i)}$ are isomorphisms. Thus the general case can be reduced to $V=SSpec \ B$ and $U=SSpec \ B_b , b\in B_0\setminus \mathrm{nil}(B_0)$. Observe that $\mathrm{nil}((B_0)_b)=\mathrm{nil}(B_0)_b$.

Since $B_0/\mathsf{nil}(B_0)$ is a domain, $\mathrm{Ksdim}_0(B) =\mathrm{Kdim}(B_0/\mathrm{nil}(B_0))$ coincides with the transcendence degree of its field of fractions (cf. \cite{AM}, XI), whence $\mathrm{Ksdim}_0(B) =\mathrm{Ksdim}_0(B_b)$. Lemma \ref{whensuperdimensiondoesnotchange} concludes the proof.
\end{proof}
Proposition \ref{dimension} allows to define a {\it super-dimension} of any irreducible superscheme $X$  as $\mathrm{sdim}(X)=\mathrm{Ksdim}(\mathcal{O}(U))$, where $U$ is any (non-empty) open affine super-subscheme of $X$. 
\begin{rem}\label{superdimatclosedpoint}
\emph{Let $X$ be an irreducible superscheme. The same arguments as in Proposition \ref{dimension} show that $\mathrm{Ksdim}_1(\mathcal{O}_x)=\mathrm{sdim}_1(X)$ for any point $x\in X^e$. In particular, if $x$ is a closed point of $X$, then $\mathrm{Ksdim}(\mathcal{O}_x) =\mathrm{sdim}(X)$. Besides, we have $\mathrm{sdim}(U)=\mathrm{sdim}(X)$ for (nonempty) open super-subscheme $U$ of $X$.}
\end{rem}
\subsection{Nonsingular superschemes}

Let $X$ be an irreducible superscheme. Then $X$ is said to be \emph{nonsingular}, if for any $x\in X^e$ the super-ring $\mathcal{O}_x$ is regular.
\begin{theorem}\label{acriteriaofnonsingularity}
Let $X$ be as above and assume that $K$ is perfect. Then $X$ is nonsingular if and only if the sheaf $\Omega_{X/K}$ is locally free of rank $\mathrm{sdim}(X)$.
\end{theorem}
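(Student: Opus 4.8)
The plan is to reduce the global statement to a pointwise criterion on the stalks of $\Omega_{X/K}$, apply Theorem \ref{regularity} at each point, and then glue via Lemma \ref{locallyfreesheaves}. First I would record the two standing facts that make the stalkwise analysis possible. Since $X$ is of finite type over $K$ it is Noetherian, so $\Omega_{X/K}$ is coherent; and for an affine open $U\simeq SSpec\, A$ with $x=\mathfrak{p}\in U^e$, Lemma \ref{baseextension} identifies the stalk $(\Omega_{X/K})_x$ with $(\Omega_{A/K})_{\mathfrak{p}}\simeq\Omega_{\mathcal{O}_x/K}$. Each $\mathcal{O}_x=A_{\mathfrak{p}}$ is a localization of a finitely generated $K$-superalgebra whose residue field $K(x)$ is finitely generated over the perfect field $K$, hence separably generated; thus Theorem \ref{regularity} applies to $B=\mathcal{O}_x$ and asserts that $\mathcal{O}_x$ is regular if and only if $\Omega_{\mathcal{O}_x/K}$ is free of rank $\mathrm{Ksdim}(\mathcal{O}_x)+\mathrm{tr.deg}_K(K(x))\,|\,0$.

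The decisive step is to show that this pointwise rank equals the constant $\mathrm{sdim}(X)$ at every $x$, regardless of regularity. For the odd component this is precisely Remark \ref{superdimatclosedpoint}, giving $\mathrm{Ksdim}_1(\mathcal{O}_x)=\mathrm{sdim}_1(X)$. For the even component, I would use that $I_{\mathcal{O}_x}$ is nilpotent, so $\mathrm{Ksdim}_0(\mathcal{O}_x)=\mathrm{Kdim}(\overline{\mathcal{O}_x})$, and that $\overline{\mathcal{O}_x}\simeq\mathcal{O}_{X_{res},x}$ is the local ring at $x$ of the integral scheme $X_{res}$, which is of finite type over $K$. The classical dimension formula for such a scheme (see, e.g., \cite{hart}) gives $\mathrm{Kdim}(\overline{\mathcal{O}_x})+\mathrm{tr.deg}_K(K(x))=\dim X_{res}=\mathrm{sdim}_0(X)$. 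Combining the two components yields the identity $\mathrm{Ksdim}(\mathcal{O}_x)+\mathrm{tr.deg}_K(K(x))\,|\,0=\mathrm{sdim}(X)$ for every $x\in X^e$.

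Granting this identity, the theorem follows formally and symmetrically in both directions. By Theorem \ref{regularity}, $\mathcal{O}_x$ is regular if and only if the stalk $(\Omega_{X/K})_x\simeq\Omega_{\mathcal{O}_x/K}$ is a free $\mathcal{O}_x$-supermodule of rank $\mathrm{sdim}(X)$; hence $X$ is nonsingular, i.e. $\mathcal{O}_x$ is regular for all $x$, if and only if every stalk of $\Omega_{X/K}$ is free of rank $\mathrm{sdim}(X)$. By Lemma \ref{locallyfreesheaves} applied to the coherent sheaf $\Omega_{X/K}$ on the Noetherian superscheme $X$, the latter condition is equivalent to $\Omega_{X/K}$ being locally free of rank $\mathrm{sdim}(X)$, which completes the argument. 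I expect the one genuinely non-routine input to be the rank identity of the middle paragraph: the main obstacle is to recognize that the rank predicted locally by Theorem \ref{regularity}, which a priori mixes the transcendence degree of the (possibly non-closed) residue field with the local super-dimension, collapses to the single global invariant $\mathrm{sdim}(X)$ by virtue of the variety dimension formula together with the invariance of the odd dimension recorded in Remark \ref{superdimatclosedpoint}.
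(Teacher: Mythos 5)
Your proof is correct and follows essentially the same route as the paper: reduce to the stalks, apply Theorem \ref{regularity} at each point together with the invariance of the odd Krull dimension under localization, and glue via Lemma \ref{locallyfreesheaves}. Your middle step --- the unconditional identity $\mathrm{Ksdim}(\mathcal{O}_x)+\mathrm{tr.deg}_K(K(x))\,|\,0=\mathrm{sdim}(X)$, obtained from the dimension formula for the irreducible finite-type scheme $X_{res}$ (which need only be irreducible rather than integral, a harmless slip since Krull dimension ignores nilpotents) --- is in fact a cleaner account of the rank bookkeeping that the paper phrases, somewhat opaquely, as an equivalence with regularity of $\overline{R}_{\overline{\mathfrak{p}}}$ via Exercise II.8.1(c) of \cite{hart}.
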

\begin{proof}
Without loss of generality one can assume that $X$ is affine, say $X=SSpec \ R$. Then $\mathrm{Ksdim}(R)=\mathrm{sdim}(X)$ and all one need to prove is that $R_{\mathfrak{p}}$ is regular if and only if $(\Omega_{R/K})_{\mathfrak{p}}\simeq \Omega_{R_{\mathfrak{p}}/K}$ is a free $R_{\mathfrak{p}}$-supermodule of rank $\mathrm{Ksdim}(R)$, for each point $\mathfrak{p}\in (SSpec \ R)^e$. 

On the other hand, by Theorem \ref{regularity} the local super-ring $R_{\mathfrak{p}}$ is regular if and only if  $\Omega_{R_{\mathfrak{p}}/K}$ is a free $R_{\mathfrak{p}}$-supermodule of rank $\mathrm{Ksdim}(R_{\mathfrak{p}})+\mathrm{tr.deg}_K(K(R_{\mathfrak{p}}))|0$. Since $\mathrm{Ksdim}_1(R)=\mathrm{Ksdim}_1(R_{\mathfrak{p}})$, the equality 
\[\mathrm{Ksdim}(R)=\mathrm{Ksdim}(R_{\mathfrak{p}})+\mathrm{tr.deg}_K(K(R_{\mathfrak{p}}))|0\]
holds if and only if
\[\mathrm{Ksdim}_0(R)=\mathrm{Ksdim}_0(R_{\mathfrak{p}})+\mathrm{tr.deg}_K(K(R_{\mathfrak{p}}))\]
does. Note that
\[\mathrm{Ksdim}_0(R)=\mathrm{Kdim}(\overline{R}), \quad \mathrm{Ksdim}_0(R_{\mathfrak{p}})=\mathrm{Kdim}(\overline{R}_{\overline{\mathfrak{p}}}).\]
Moreover, since $K(R_{\mathfrak{p}})=K(\overline{R}_{\overline{\mathfrak{p}}})$, we have
\[\mathrm{tr.deg}_K(K(R_{\mathfrak{p}}))=\mathrm{tr.deg}_K(K(\overline{R}_{\overline{\mathfrak{p}}})).\]
Thus any of the above equalities holds if and only if $\overline{R}_{\overline{\mathfrak{p}}}$ is regular (see \cite{hart}, Exercise II.8.1.c). Theorem obviously follows.
\end{proof}

\subsection{Generically nonsingular superschemes} \

Oppositely to the purely even case, there are (even strong) integral superschemes, which are singular at any point (compare with \cite{hart}, Exercise II.8.1(d)). 
\begin{example}\label{everywhere singular}
\emph{In fact, let $A$ be a finitely generated $K$-superalgebra such that $A_0$ is a domain and $A_1^2=0$.
Assume also that $A_1$ is a free $A_0$-module of rank $t > 1$.
Set $X=SSpec \ A$.}

\emph{Since $A$ is a strong superdomain, $X$ is strong integral. Further, $(A_{\mathfrak{p}})_1$ is a free $(A_{\mathfrak{p}})_0$-module of the same rank $t$, for each $\mathfrak{p}\in X^e$. Proposition
\ref{prop:Ksdim_regular}(b) immediately shows that $A_{\mathfrak{p}}$ is not regular.}
\end{example}
A superscheme $X$ is called \emph{generically nonsingular}, provided $X$ contains an nonempty open nonsingular super-subscheme.
\begin{lm}\label{opennonsingular}
Let $X$ be an irreducible superscheme over a perfect field $K$. Then $X$ is generically nonsingular if and only if there is $x\in X^e$ such that $\mathcal{O}_x$ is regular.
\end{lm}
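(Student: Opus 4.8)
The plan is to prove the nontrivial implication, since the forward direction is immediate: if $X$ is generically nonsingular it contains a nonempty open nonsingular super-subscheme $U$, and by the definition of nonsingularity every point $x\in U^e$ already has $\mathcal{O}_x$ regular, so such an $x$ exists. Assume conversely that $\mathcal{O}_x$ is regular for some $x\in X^e$, and set $d=\mathrm{sdim}(X)$. First I would pass to an open affine super-subscheme $U\simeq SSpec\ R$ containing $x$, where $R$ is a finitely generated $K$-superalgebra and $\mathcal{O}_x\simeq R_{\mathfrak{p}}$ for the prime $\mathfrak{p}$ corresponding to $x$; by Remark \ref{superdimatclosedpoint} one has $\mathrm{sdim}(U)=d$, and $U$ is again irreducible.

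The core of the argument is that regularity of the local super-rings is detected by local freeness of $\Omega_{X/K}$. Concretely, the reduction carried out inside the proof of Theorem \ref{acriteriaofnonsingularity} shows that, for each prime $\mathfrak{q}$ of $R$, the local super-ring $R_{\mathfrak{q}}$ is regular if and only if the stalk $(\Omega_{R/K})_{\mathfrak{q}}\simeq\Omega_{R_{\mathfrak{q}}/K}$ is a free $R_{\mathfrak{q}}$-supermodule of rank $d$. Applying this at $\mathfrak{p}$, the hypothesis that $R_{\mathfrak{p}}$ is regular tells me that $(\Omega_{R/K})_{\mathfrak{p}}$ is free of rank $d$. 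Now $\Omega_{R/K}$ is a coherent $\mathcal{O}_U$-supermodule by Remark \ref{finiteness} and $U$ is Noetherian, so Lemma \ref{locallyfreesheaves}(1) yields an open neighborhood $V\subseteq U$ of $x$ on which $\Omega_{X/K}|_V\simeq\Omega_{V/K}$ is free, hence locally free, of the same rank $d$.

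To finish I would feed this back into the criterion. Since $V$ is a nonempty open super-subscheme of the irreducible $X$, it is itself irreducible with $\mathrm{sdim}(V)=d$ (Remark \ref{superdimatclosedpoint}), so $\Omega_{V/K}$ is locally free of rank $\mathrm{sdim}(V)$; Theorem \ref{acriteriaofnonsingularity} then gives that $V$ is nonsingular, exhibiting the required nonempty open nonsingular super-subscheme and proving that $X$ is generically nonsingular. The step I expect to be the main obstacle is not the topological openness—that is exactly Lemma \ref{locallyfreesheaves}(1)—but making sure the rank of the free stalk at the single regular point $x$ is really the global invariant $d=\mathrm{sdim}(X)$, rather than a quantity depending on $\mathfrak{p}$. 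This rests on the dimension formula $\mathrm{Ksdim}_0(R)=\mathrm{Ksdim}_0(R_{\mathfrak{p}})+\mathrm{tr.deg}_K(K(R_{\mathfrak{p}}))$ for the finitely generated domain $\overline{R}/\mathrm{nil}(\overline{R})$ over $K$, together with the constancy $\mathrm{Ksdim}_1(R)=\mathrm{Ksdim}_1(R_{\mathfrak{p}})$ of the odd dimension (Remark \ref{superdimatclosedpoint}); both are already packaged, via Theorem \ref{regularity}, into the pointwise equivalence quoted above, with perfectness of $K$ entering only through the separable-generation hypothesis needed to invoke that theorem.
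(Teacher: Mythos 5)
Your argument is correct and is essentially the paper's own proof: both rest on the pointwise criterion extracted from Theorem \ref{acriteriaofnonsingularity} (regularity of $\mathcal{O}_x$ is equivalent to freeness of $(\Omega_{X/K})_x$ of rank $\mathrm{sdim}(X)$), then spread freeness to a neighborhood via Lemma \ref{locallyfreesheaves} and use Remark \ref{superdimatclosedpoint} to identify the rank with the global invariant. You have merely written out in detail the steps the paper leaves implicit.
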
 
\begin{proof}
As it has been proven in Theorem \ref{acriteriaofnonsingularity}, a local super-ring $\mathcal{O}_x$ is regular if and only if $(\Omega_{X/K})_x$ is a free $\mathcal{O}_x$-supermodule of rank $\mathrm{sdim}(X)$. Thus our statement follows by Lemma \ref{locallyfreesheaves} and Remark \ref{superdimatclosedpoint}.
\end{proof}
\begin{pr}\label{regularonopensubset}
Let $X$ be an integral superscheme over a perfect field $K$. Let $\xi$ be the generic point of $X$. The following conditions are equivalent :
\begin{itemize}
\item[(a)] $X$ is generically nonsingular;
\item[(b)] $\mathcal{O}_{\xi}$ is regular;
\item[(c)] there is a nonempty open super-subscheme $U$ of $X$ such that $U_{ev}$ is regularly immersed into $U$ (cf. \cite[4.5]{sm}).
\end{itemize}
\end{pr}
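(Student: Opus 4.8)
The plan is to prove the two equivalences (a) $\Leftrightarrow$ (b) and (b) $\Leftrightarrow$ (c) separately, treating the first as essentially formal and concentrating the real work on the second. Throughout I use that the generic point $\xi$ lies in every nonempty open subset of $X^e$, and that $\mathcal{O}_\xi = SK(X)$ is a superfield isomorphic to $SQ(A)=A_{I_A}$ for every affine open $U=SSpec\ A$ of $X$, with $\xi$ corresponding to the smallest prime $I_A$ (Lemma \ref{genericpoint}).

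For (a) $\Leftrightarrow$ (b): if $X$ is generically nonsingular, it contains a nonempty open nonsingular super-subscheme $U$; since $\xi \in U^e$ and nonsingularity means all stalks are regular, $\mathcal{O}_\xi = \mathcal{O}_{U,\xi}$ is regular, giving (b). Conversely, if $\mathcal{O}_\xi$ is regular, then $\xi$ is a point whose local super-ring is regular, so Lemma \ref{opennonsingular} immediately yields (a). This direction is where perfectness of $K$ enters, through Lemma \ref{opennonsingular}.

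The heart of the matter is (b) $\Leftrightarrow$ (c). I would first reformulate (c) algebraically: the immersion $U_{ev}\hookrightarrow U$ is defined by the super-ideal $I_A=AA_1$, and since regular immersion is a local condition, ``$U_{ev}$ regularly immersed'' means that on affine pieces $I_A$ is generated by an odd regular sequence. By Remark \ref{coincidenceofdefinitions} and the analysis underlying Proposition \ref{prop:regular_global}, this is equivalent to the two \emph{odd} conditions there --- that the $\overline{A}$-module $I_A/I_A^2$ is projective and that $\lambda_A$ is an isomorphism --- with the \emph{even} condition (regularity of $\overline{A}$) deliberately omitted, since (c) does not ask $U$ itself to be nonsingular. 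With this in hand both implications follow from the behaviour of these conditions under localization, recorded in the proof of Corollary \ref{cor:localization_of_regular_local}, together with spreading out. For (c) $\Rightarrow$ (b): on an affine $U=SSpec\ A$ contained in the open set of (c), localizing at $\xi=I_A$ gives $\mathcal{O}_\xi=A_{I_A}$, whose reduction $\overline{A_{I_A}}=\mathrm{Frac}(\overline{A})$ is a field (hence regular), in which $(I_A/I_A^2)_{I_A}$ is automatically free and $\lambda_{A_{I_A}}=(\lambda_A)_{I_A}$ remains an isomorphism; Proposition \ref{prop:regular_local} then gives regularity of $\mathcal{O}_\xi$. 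For (b) $\Rightarrow$ (c): Proposition \ref{prop:regular_local} gives that $(I_A/I_A^2)_{I_A}$ is free and $\lambda_{\mathcal{O}_\xi}$ is an isomorphism; since $\overline{A}$ is a domain and these are finite objects, freeness of $I_A/I_A^2$ at the generic point of $\mathrm{Spec}\,\overline{A}$ propagates to a nonempty principal open, and the kernel of the surjection $\lambda_A$, being a finite graded module vanishing after localization at $I_A$, vanishes on a further nonempty principal open. Shrinking $U$ to the corresponding $SSpec\ A_f$ makes both odd conditions hold there, so a lift of a free basis of $I_{A_f}/I_{A_f}^2$ generates $I_{A_f}$ and forms an odd regular sequence, yielding (c).

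The main obstacle I anticipate is the algebraic reformulation of (c): pinning down that the notion of ``regularly immersed'' from \cite[4.5]{sm} coincides exactly with the conjunction of projectivity of $I_A/I_A^2$ and bijectivity of $\lambda_A$, cleanly separated from the regularity of $\overline{A}$. The content to verify is the Koszul-type statement that an odd regular sequence generating $I_A$ is precisely a basis of the free module $I_A/I_A^2$ whose exterior algebra maps isomorphically onto $\mathsf{gr}_{I_A}(A)$. Everything else --- genericity of $\xi$, compatibility of $\lambda_A$ and $I_A/I_A^2$ with localization, and the openness of the loci where a finite module over a domain is free or vanishes --- is standard once that dictionary is fixed.
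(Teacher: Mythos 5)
Your proposal is correct and follows essentially the same route as the paper: (a)\,$\Leftrightarrow$\,(b) via Lemma \ref{opennonsingular} and the fact that $\xi$ lies in every nonempty open set, and (b)\,$\Leftrightarrow$\,(c) by observing that for the superfield $\mathcal{O}_\xi$ regularity reduces to the two odd conditions of Proposition \ref{prop:regular_local} (the even one being automatic), identifying these with regular immersion via Schmitt's results, and spreading out freeness of $I_A/I_A^2$ and bijectivity of $\lambda_A$ from the generic point to a nonempty open set as in Lemma \ref{locallyfreesheaves}. The paper merely compresses the dictionary you flag as the main obstacle into a citation of \cite[3.2]{sm}.
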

\begin{proof}
Use Lemma \ref{opennonsingular} to prove (a)$\Leftrightarrow$ (b).

Without loss of generality one can assume that $X$ is affine, say $X\simeq SSpec \ A$. 
Recall that $\mathcal{O}_{\xi}$ is a superfield with the maximal superieal $\mathfrak{m}_{\xi}=I_{\mathcal{O}_{\xi}}$. Thus $\mathcal{O}_{\xi}$ is regular if and only if $\lambda_{\mathcal{O}_{\xi}} : \wedge_{\overline{\mathcal{O}_{\xi}}}(\mathcal{I}_{\xi}/\mathcal{I}_{\xi}^2)\to\mathsf{gr}_{\mathcal{I}_{\xi}}(\mathcal{O}_{\xi})$ is an isomorphism if and only $I_{\mathcal{O}_{\xi}}$ is an regular superideal (see \cite[3.2]{sm}). 

Since both $\wedge_{\overline{A}}(I_A/I_A^2)$ and $\mathsf{gr}_{I_A}(A)$ are finitely generated $\overline{A}$-(super)modules, arguing as in Lemma \ref{locallyfreesheaves} one can show that 
$\lambda_{\mathcal{O}_{\xi}}$ is an isomorphism if and only if
there is an open subset $U\subseteq X^e$, such that for any $\mathfrak{p}\in U$ the following hold :
\begin{itemize}
\item[(1)]  $\lambda_{\mathcal{O}_{\mathfrak{p}}}$  is an isomorphism;
\item[(2)] $\mathcal{I}_{\mathfrak{p}}/\mathcal{I}_{\mathfrak{p}}^2$ is a free $\overline{\mathcal{O}_{\mathfrak{p}}}$-supermodule,
\end{itemize}
whence (b)$\Leftrightarrow$(c). 
\end{proof}
Recall that if $X$ is an integral scheme and $Y$ is a closed integral subscheme of $X$, such that $\mathrm{dim}(X)=\mathrm{dim}(Y)$, then $X=Y$. Surprisingly, a naive analog of this statement is no longer true in the category of superschemes. 
\begin{example}\label{no coincidence}
\emph{Let $X=SSpec \ A$ be the everywhere singular superscheme from Example \ref{everywhere singular}. Let $b\in A_1$ is a free generator of $A_0$-module $A_1$. Set $Y=SSpec \ A/Ab$. Then $Y$ is isomorphic to a proper (strong integral as well) closed super-subscheme of $X$, but $\mathrm{sdim}(Y)=\mathrm{sdim}(X)$.}
\end{example}
Nevertheless, a weaker super-analog of the above statement takes place.
\begin{theorem}\label{coincidence}
Let $X$ be a generically nonsingular integral superscheme over a perfect field $K$. If $Y$ is a closed super-subscheme of $X$, which is generically nonsingular and integral as well, then $\mathrm{sdim}(X)=\mathrm{sdim}(Y)$ implies $X=Y$.
\end{theorem}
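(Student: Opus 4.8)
The plan is to compare $X$ and $Y$ first at the level of their even reductions, then at the generic point, and finally to propagate the resulting generic equality over the whole superscheme.

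First I would split the equality $\mathrm{sdim}(X)=\mathrm{sdim}(Y)$ into its even and odd components. The even component is the Krull dimension of the integral scheme $X_{res}$ (resp. $Y_{res}$), since $\mathrm{Ksdim}_0(\mathcal{O}(U))=\mathrm{Kdim}(\overline{\mathcal{O}(U)})$ and $I_A$ is nilpotent. As $Y$ is a closed super-subscheme of $X$, the scheme $Y_{res}$ is a closed integral subscheme of the integral scheme $X_{res}$; equality of the even components forces, by the classical statement for integral schemes of finite type over $K$, that $Y_{res}=X_{res}$. In particular $X$ and $Y$ share the underlying space $X^e=Y^e$, the generic point $\xi$, and the function field $L=K(X)=K(Y)$, and the kernel $\mathcal{J}_Y=\ker(\mathcal{O}_X\to i_*\mathcal{O}_Y)$ is contained in $\mathcal{I}_X$.

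Next I would localize at $\xi$. By Lemma \ref{genericpoint} the stalks $\mathcal{O}_{X,\xi}$ and $\mathcal{O}_{Y,\xi}$ are the function superfields $SK(X)$ and $SK(Y)$, and by Proposition \ref{regularonopensubset} the generic nonsingularity of $X$ and of $Y$ means that both are regular. Their maximal super-ideals are nilpotent, so the superfields are complete, and Corollary \ref{cor:completion} identifies them with polynomial superalgebras $L[Y_1,\dots,Y_s]$ and $L[Z_1,\dots,Z_{s'}]$, where $s=\mathrm{sdim}_1(X)$ and $s'=\mathrm{sdim}_1(Y)$; the equality of odd components gives $s=s'$. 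The closed immersion induces a surjective local homomorphism $\phi:SK(X)\to SK(Y)$. Since $\phi$ carries a coefficient field of $SK(X)$ isomorphically onto one of $SK(Y)$, it becomes $L$-linear once these are identified, so it is a surjection between $L$-super-vector spaces of the same finite total dimension $2^s$, hence an isomorphism. Therefore $(\mathcal{J}_Y)_\xi=\ker\phi=0$.

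Finally I would globalize. Working on an affine chart $X\cap V\simeq SSpec\,A$ with $Y\cap V\simeq SSpec\,A/I$ and $I\subseteq I_A$, the previous step says $S^{-1}I=0$ for $S=A_0\setminus A_1^2$, i.e. $\mathcal{J}_Y$ vanishes at $\xi$. To turn this into $I=0$ I would use generic nonsingularity once more: the nonsingular locus is a dense open containing $\xi$, and on it the local rings are regular, hence strong superdomains by Proposition \ref{prop:regular_local}; by Proposition \ref{integral} the structure sheaf is a sheaf of strong superdomains there, so it injects into the corresponding $SQ$ and $S^{-1}I=0$ forces $\mathcal{J}_Y$ to vanish on the whole nonsingular locus. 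Thus $X$ and $Y$ agree over a dense open subscheme. Since $X$ is integral and $Y$ is a closed super-subscheme containing this dense open with matching structure, a scheme-theoretic closure argument (the smallest closed super-subscheme of $X$ through which the dense open nonsingular locus factors is $X$ itself) yields $\mathcal{J}_Y=0$, that is $X=Y$.

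The main obstacle is exactly this last globalization. Equality of super-dimensions only forces $\mathcal{J}_Y$ to vanish generically, and in the super-setting this is genuinely weaker than global vanishing: a nonzero super-ideal contained in $I_A$ may be annihilated by even elements lying outside $A_1^2$ (\emph{odd torsion}), so one cannot argue as for ordinary integral schemes. The whole point of assuming generic nonsingularity of both $X$ and $Y$ is to supply the strong-superdomain, i.e. torsion-free, behaviour on a dense open neighbourhood of $\xi$; the delicate part is to combine this with the integrality of $X$ and $Y$ in order to propagate the equality off the nonsingular locus. This is consistent with the everywhere-singular examples discussed above, where the absence of such torsion-freeness is precisely what allows a proper closed super-subscheme to share the super-dimension of the ambient superscheme.
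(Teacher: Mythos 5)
Your proposal tracks the paper only as far as the identification $X_{res}=Y_{res}$; after that the two arguments genuinely diverge. You localize solely at the generic point $\xi$, identify both stalks with $L[Y_1,\dots,Y_s]$ via Corollary \ref{cor:completion}, and conclude $(\mathcal{J}_Y)_\xi=0$ by an $L$-dimension count ($2^s$ on both sides); this part is correct. The paper instead restricts to a common nonsingular affine open, $X=SSpec\,A$ and $Y=SSpec\,A/I$ with $I\subseteq I_A$, and argues with the associated graded: by Proposition \ref{prop:regular_global} the conormal modules $I_A/I_A^2$ and $I_B/I_B^2$ are projective over $\overline{A}\simeq\overline{B}$ of the same rank $\mathrm{sdim}_1(X)=\mathrm{sdim}_1(Y)$, so the induced surjection between them is an isomorphism; since $\lambda_A$ and $\lambda_B$ are isomorphisms, $\mathsf{gr}_{I_A}(A)\to\mathsf{gr}_{I_B}(B)$ is an isomorphism, and nilpotency of $I_A$ then forces $A\simeq B$. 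Both routes land at the same intermediate statement, namely that $\mathcal{J}_Y$ vanishes over the nonsingular locus; yours uses only the generic stalk and Cohen's theorem, the paper's uses regularity at every point of the locus but stays elementary.

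The genuine gap is exactly where you place it, and the repair you propose does not close it. The ``scheme-theoretic closure'' step asserts that the smallest closed super-subscheme of $X$ containing the dense open nonsingular locus $U$ is $X$ itself, i.e.\ that $\mathcal{O}_X\to j_*\mathcal{O}_U$ is injective. For superschemes this is not a consequence of integrality of $X_{res}$: it is precisely the strong-superdomain (odd-torsion-free) condition relative to $U$, so invoking it is circular. Worse, without it the statement can actually fail: take $A=K[x][y_1,y_2]/(xy_2,\,y_1y_2)$ and $Y=SSpec\,A/Ay_2\simeq SSpec\,K[x][y_1]$. Then $X=SSpec\,A$ is integral and nonsingular on $D(x)$, $Y$ is integral and nonsingular, $\mathrm{sdim}(X)=\mathrm{sdim}(Y)=1|1$, yet $Ay_2=Ky_2\neq 0$, so $Y\neq X$; the section $y_2$ is exactly the odd torsion supported off $U$ that your closure argument would have to exclude. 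So the hypothesis must be strengthened to \emph{strong} integrality of $X$ (the paper's own proof performs the same reduction to the nonsingular case without further comment, so it is subject to the same remark). Under strong integrality your argument finishes immediately and more cheaply than the paper's: $\mathcal{O}_{X,x}\hookrightarrow\mathcal{O}_\xi$ for every $x$, so $(\mathcal{J}_Y)_\xi=0$ already gives $\mathcal{J}_Y=0$, and no closure argument or restriction to the nonsingular locus is needed at all.
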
 
\begin{proof}
The purely even version of our theorem infers that $X_{res}=Y_{res}$, that is $\mathcal{J}_Y\subseteq \mathcal{I}_X$. Thus $\xi\in Y^e$, where $\xi$ is a generic point of $X$. Since both $X$ and $Y$ are irreducible, Proposition \ref{regularonopensubset} allows to assume that both $X$ and $Y$ are nonsingular. Moreover, one can also assume that both $X$ and $Y$ are affine, say $X=SSpec \ A$ and $Y\simeq SSpec \ A/I$, where $I\subseteq I_A$ and both $A$ and $A/I$ are regular. 

Let $B$ denote the superalgebra $A/I$. Note that any prime super-ideal of $B$ has a form $\overline{\mathfrak{p}}=\mathfrak{p}/I$, where $\mathfrak{p}$ is a prime super-ideal of $A$. Thus the residue fields of both $A_{\mathfrak{p}}$ and $B_{\overline{\mathfrak{p}}}$ are isomorphic, say, to a field $L$.

For any prime super-ideal $\mathfrak{p}$ of $A$ we have
\[\mathrm{sdim}_1(X)=\mathrm{Ksdim}_1(A_{\mathfrak{p}})=
\mathrm{sdim}_1(Y)=\mathrm{Ksdim}_1(B_{\overline{\mathfrak{p}}}),\]
hence both $\overline{A}\simeq\overline{B}$-modules $I_A/I_A^2$ and $I_B/I_B^2$ are projective modules of the same rank, hence isomorphic. By Proposition \ref{prop:regular_global}(b), the epimorphism $A\to B$ induces an isomorphism  $\mathsf{gr}_{I_A}(A)\simeq\mathsf{gr}_{I_B}(B)$, hence $A\to B$ is an isomorphism and $X=Y$.
\end{proof}

\subsection{Closed super-subschemes of generically nonsingular superschemes}

The following theorem superizes Theorem II.8.17, \cite{hart}.
\begin{theorem}\label{differentialsheavesandregularity}
Let $X$ be a nonsingular irreducible superscheme of finite type over a perfect field $K$. Let $Y$ be an irreducible closed super-subscheme of $X$ defined by a sheaf of superideals $\mathcal{J}$. Then $Y$ is nonsingular if and only if the following conditions hold :
\begin{itemize}
\item[(1)] $\Omega_{Y/K}$ is locally free;
\item[(2)] The sequence 
\[0\to\mathcal{J}/\mathcal{J}^2\to\Omega_{X/K}\otimes_{\mathcal{O}_X}\mathcal{O}_Y\to\Omega_{Y/K}\to 0\]
is exact. Moreover, the sheaf $\mathcal{J}/\mathcal{J}^2$ is locally free of rank $\mathrm{sdim} X-\mathrm{sdim} Y$.
\end{itemize} 
\end{theorem}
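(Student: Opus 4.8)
The plan is to imitate the classical proof of Theorem II.8.17 in \cite{hart}, using Theorem \ref{acriteriaofnonsingularity} to translate freely between nonsingularity of a superscheme and local freeness of its sheaf of K\"{a}hler superdifferentials, and then to reduce the whole statement to a local question on stalks. All the assertions are local on $Y$, and by Lemma \ref{locallyfreesheaves} local freeness of a coherent $\mathcal{O}_Y$-supermodule may be tested on stalks; since closed points are dense in a superscheme of finite type, it suffices to argue at a closed point $y\in Y^e$, which is simultaneously a closed point $x\in X^e$. I would write $A=\mathcal{O}_x$, $B=\mathcal{O}_y=A/I$ and $L=A/\mathfrak{m}_A=B/\mathfrak{m}_B$; as $K$ is perfect, $L$ is finite separable over $K$, so $\Omega_{L/K}=0$ and $\mathrm{tr.deg}_K(L)=0$. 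The relevant conormal sequence is the right-exact sequence of Proposition \ref{secondexactsequenceforsheaves}, whose stalk at $y$ is the sequence of Proposition \ref{secondexactsequence}, and tensoring the latter with $L$ while invoking Lemma \ref{cotangent} (with $\Omega_{L/K}=0$) produces an exact sequence of finite-dimensional $L$-super-vector spaces
\[ I/I^2 \otimes_B L \to \mathfrak{m}_A/\mathfrak{m}_A^2 \to \mathfrak{m}_B/\mathfrak{m}_B^2 \to 0. \]

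I would dispatch the converse direction first, since it is the easy one. Assuming (1) and (2): because $X$ is nonsingular, Theorem \ref{acriteriaofnonsingularity} makes $\Omega_{X/K}\otimes_{\mathcal{O}_X}\mathcal{O}_Y$ locally free of rank $\mathrm{sdim}(X)$; by (2) the subsheaf $\mathcal{J}/\mathcal{J}^2$ is locally free of rank $\mathrm{sdim}(X)-\mathrm{sdim}(Y)$ and the sequence is exact, so it exhibits $\Omega_{Y/K}$ as locally free of rank $\mathrm{sdim}(Y)$. Applying Theorem \ref{acriteriaofnonsingularity} now to $Y$ yields that $Y$ is nonsingular.

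For the forward direction, suppose $Y$ is nonsingular. Then (1) is immediate from Theorem \ref{acriteriaofnonsingularity}. For (2), regularity of $A$ and $B$ gives $\mathrm{sdim}_L(\mathfrak{m}_A/\mathfrak{m}_A^2)=\mathrm{Ksdim}(A)=\mathrm{sdim}(X)$ and $\mathrm{sdim}_L(\mathfrak{m}_B/\mathfrak{m}_B^2)=\mathrm{Ksdim}(B)=\mathrm{sdim}(Y)$ at the closed point $y$, by Remark \ref{superdimatclosedpoint}; hence in the displayed $L$-linear sequence the image of $I/I^2\otimes_B L$ has super-dimension exactly $e:=\mathrm{sdim}(X)-\mathrm{sdim}(Y)$. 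By right-exactness of Proposition \ref{secondexactsequenceforsheaves}, $\mathcal{J}/\mathcal{J}^2$ surjects onto the kernel $\mathcal{K}$ of $\Omega_{X/K}\otimes_{\mathcal{O}_X}\mathcal{O}_Y\to\Omega_{Y/K}$, and $\mathcal{K}$ is locally free of rank $e$ since it is the kernel of a surjection of locally free sheaves with locally free quotient. Thus the forward direction reduces entirely to the local claim that $I/I^2$ is a free $B$-supermodule of rank $e$: granting this, the surjection $\mathcal{J}/\mathcal{J}^2\twoheadrightarrow\mathcal{K}$ between locally free sheaves of equal rank $e$ over the integral superscheme $Y$ is forced to be an isomorphism (its kernel is torsion, yet sits inside the torsion-free $\mathcal{J}/\mathcal{J}^2$), which gives simultaneously the exactness in (2) and the stated rank.

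The heart of the matter, and the step I expect to be the main obstacle, is therefore the purely local structural statement: if $A$ is a regular local super-ring and $I$ is a super-ideal with $A/I=B$ regular, then $I/I^2$ is $B$-free of rank $e$ and the map $I/I^2\otimes_B L\to\mathfrak{m}_A/\mathfrak{m}_A^2$ is injective. To prove it I would pass to completions: by faithful flatness of $B\to\widehat{B}$ both the freeness of $I/I^2$ and the left-exactness of the conormal sequence may be tested after completion, and by Corollary \ref{cor:completion} together with Theorem \ref{an equality of super-dimensions} (which keeps the super-dimensions fixed) one may assume $\widehat{A}\simeq L[[X_1,\dots,X_r]]\otimes_L\wedge_L(Y_1,\dots,Y_s)$ and $\widehat{B}$ of the same shape, with a surjection $\widehat{A}\twoheadrightarrow\widehat{B}$. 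On the even quotient $\overline{\widehat{A}}=L[[X]]\twoheadrightarrow\overline{\widehat{B}}$ the classical theory of regular local rings shows the kernel is generated by part of a regular system of parameters, so after an even coordinate change it is $(X_{r'+1},\dots,X_r)$. For the odd directions I would use that $\lambda_{\widehat{A}}$ and $\lambda_{\widehat{B}}$ are isomorphisms by Proposition \ref{prop:regular_local}(b), i.e. that both super-rings are genuine exterior algebras over their regular even parts, in order to normalize the odd generators of the kernel, after an odd coordinate change, to a subset $Y_{s'+1},\dots,Y_s$ of the odd indeterminates. Once the kernel is generated by such a coordinate subset, $I/I^2$ is visibly free of rank $(r-r')\mid(s-s')=e$ and the conormal sequence is left-exact. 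The delicate point is precisely this simultaneous even-and-odd coordinate normalization of the surjection $\widehat{A}\to\widehat{B}$: the even part is standard, but the odd part requires carefully combining the freeness of $I_{\widehat{A}}/I_{\widehat{A}}^2$ and $I_{\widehat{B}}/I_{\widehat{B}}^2$ with the two $\lambda$-isomorphisms (in the spirit of the argument in Proposition \ref{prop:regular_global}), and it is here that the essential super-geometric input enters.
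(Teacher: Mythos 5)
Your treatment of the implication ``(1) and (2) imply $Y$ nonsingular'' is correct and in fact more direct than the paper's: the authors instead use the splitting of the stalkwise conormal sequence to realize $\mathcal{J}_y$ as generated by a regular sequence (via Schmitt's theorem) and then run a super-dimension count through Lemma \ref{cotangent}, Lemma \ref{lem:odd_generators} and Theorem \ref{regularity}, whereas you obtain the same conclusion from rank bookkeeping in the exact sequence together with Theorem \ref{acriteriaofnonsingularity} applied to $Y$. Your reduction of the converse to the purely local claim that $I/I^2$ is a free $\mathcal{O}_{Y,y}$-supermodule of rank $\mathrm{sdim}(X)-\mathrm{sdim}(Y)$ is also the right move, and the torsion argument (or, more simply, the fact that a surjection of finitely generated free supermodules of equal rank is an isomorphism) correctly finishes from there.

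The genuine gap is in the proof of that local claim, at the spot you yourself flag as delicate. After completing and writing $\widehat{A}\simeq L[[X]]\otimes_L\wedge_L(Y)$ and $\widehat{B}\simeq L[[X']]\otimes_L\wedge_L(Y')$, the assertion that the kernel $\widehat{I}$ can be ``normalized to a coordinate subset'' presupposes what must be proved: a priori you only know that $\widehat{I}$ contains $(r-r')+(s-s')$ homogeneous elements whose classes span $\ker(\mathfrak{m}_{\widehat{A}}/\mathfrak{m}_{\widehat{A}}^2\to\mathfrak{m}_{\widehat{B}}/\mathfrak{m}_{\widehat{B}}^2)$; these generate a sub-super-ideal $I'\subseteq\widehat{I}$ with $\widehat{A}/I'$ regular of the same super-dimension as $\widehat{B}$, but you must still show $I'=\widehat{I}$. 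This is not a formality in the super setting: Example \ref{no coincidence} exhibits a proper closed integral super-subscheme of the same super-dimension, so equality of super-dimensions alone never forces equality of ideals, and ``combining the freeness with the two $\lambda$-isomorphisms'' does not by itself locate the odd generators of $\widehat{I}$. The paper closes exactly this gap with Theorem \ref{coincidence} (both quotients are nonsingular, hence integral and generically nonsingular, and the $\lambda$-isomorphisms of Proposition \ref{prop:regular_global} then force $\mathsf{gr}$ of the two quotients, and hence the quotients themselves, to coincide); indeed the paper's own proof of this direction constructs the auxiliary super-subscheme $Y'$ globally following Theorem II.8.17 of \cite{hart} and then invokes Theorem \ref{coincidence}. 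Alternatively, in the complete local picture you could avoid \ref{coincidence} by first splitting $\widehat{A}\to\widehat{B}$ (lift the coordinates $X'_i,Y'_j$ to obtain a section $\sigma$), observing that $\sigma(X'),\sigma(Y')$ together with your chosen lifts form a full coordinate system of $\widehat{A}$, and checking that $\widehat{B}\to\widehat{A}/I'\to\widehat{B}$ is the identity, which forces $I'=\widehat{I}$. Either way, an actual argument is required at this step and your proposal does not yet supply one.
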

\begin{proof}
Assume that both (1) and (2) hold. Let $\mathrm{sdim}(X)=m|n$. For any point $y\in Y^e$ let $A$ and $B$ denote the local superalgebras $\mathcal{O}_{X, y}$ and $\mathcal{O}_{Y, y}$ respectively. The stalk $\mathcal{J}_y$ is naturally identified with the kernel of the local morphism $A\to B$, say $J$, so that there is an exact sequence
\[0\to J/J^2\to\Omega_{A/K}\otimes_A B\to\Omega_{B/K}\to 0,\]
where $A$-supermodule $\Omega_{A/K}$ is free of rank $m|n$ and $B$-supermodule is free of rank $p|q$. 
The latter implies that the above sequence splits, hence $J/J^2$ is a free $B$-supermodule of rank $(m-p)|(n-q)$. By \cite[Theorem 3.5]{sm} the super-ideal $J$ is generated by a regular sequence consisting of $m-p$ even and $n-q$ odd elements. Combining \cite[(15.F), Lemma 4]{mats} with Lemma \ref{whensuper-dimensionisdecreasing}, one obtains 
\[\mathrm{Ksdim}(B)=\mathrm{Ksdim}(A)-(m-p)|(n-q).\]
Since $X$ is nonsingular and $K(B)$ is an extension of $K(A)$, there hold
\[(m|n)=\mathrm{Ksdim}(A)+\mathrm{tr.deg}_K(K(A))\]
and 
\[\mathrm{tr.deg}_K(K(A))\leq \mathrm{tr.deg}_K(K(B)).\]
Combining with Lemma \ref{cotangent} and Lemma \ref{lem:odd_generators}, one derives
\[p|q\geq \mathrm{Ksdim}(B)+\mathrm{tr.deg}_K(K(B))\geq\]
\[\mathrm{Ksdim}(A)-(m-p)|(n-q)+\mathrm{tr.deg}_K(K(A))= p|q,\]
and by Theorem \ref{regularity} one derives that $B$ is regular.

Conversely, assume that $Y$ is nonsingular of superdimension $p|q$. Arguing as in Theorem II.8.17, \cite{hart}, one can construct a closed (nonsingular) super-subscheme $Y'$ of $X$, such that $Y\subseteq Y'$ and $Y'$ has the same superdimension $p|q$. Theorem \ref{coincidence} concludes the proof.
\end{proof}
Let $X$ be a generically nonsingular irreducible superscheme of finite type over a perfect field $K$. Let $U$ denote the open subset $\{x\in X^e\mid \mathcal{O}_x \ \mbox{is \ regular}\}$. It is clear that $U$ is the largest open nonsingular super-subscheme of $X$.
\begin{cor}\label{finalcorollary}
Let $X$ and $U$ be as above. Let $Y$ be a closed irreducible super-subscheme of $X$ with $Y^e\cap U^e\neq\emptyset$. Then $Y$ is generically nonsingular if and only there is a point $y\in Y^e$ such that
\begin{itemize}
\item[(1)] $\Omega_{\mathcal{O}_{Y, y}/K}$ is free;
\item[(2)] The sequence 
\[0\to\mathcal{J}_y/\mathcal{J}_y^2\to\Omega_{\mathcal{O}_{X, y}/K}\otimes_{\mathcal{O}_{X, y}}\mathcal{O}_{Y, y}\to\Omega_{\mathcal{O}_{Y, y}/K}\to 0\]
is exact. 
\end{itemize} 
\end{cor}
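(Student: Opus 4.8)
The plan is to reduce the global statement about generic nonsingularity of $Y$ to the local/pointwise characterization that has already been established, and to identify the right point $y$ at which to test conditions (1) and (2). First I would invoke Lemma \ref{opennonsingular}: since $Y$ is irreducible over the perfect field $K$, it is generically nonsingular if and only if there is a single point $y\in Y^e$ at which the local super-ring $\mathcal{O}_{Y,y}$ is regular. Thus the entire corollary is about pinning down one good point and translating regularity of $\mathcal{O}_{Y,y}$ into the differential conditions (1) and (2). The hypothesis $Y^e\cap U^e\neq\emptyset$ is what guarantees such a candidate point can be found inside the nonsingular locus of $X$, so that $\mathcal{O}_{X,y}$ is itself regular and Theorem \ref{differentialsheavesandregularity} is applicable in a neighborhood.

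Next I would carry out the two implications. For the "only if" direction, assume $Y$ is generically nonsingular; by the above there is $y\in Y^e$ with $\mathcal{O}_{Y,y}$ regular, and by shrinking I may take $y\in Y^e\cap U^e$, so that $\mathcal{O}_{X,y}$ is also regular. Now I would apply the \emph{stalk-wise} content of Theorem \ref{differentialsheavesandregularity}: the proof of that theorem already works point by point, deriving at a point $y$ where both $\mathcal{O}_{X,y}$ and $\mathcal{O}_{Y,y}$ are regular that $\Omega_{\mathcal{O}_{Y,y}/K}$ is free and that the second exact sequence (Proposition \ref{secondexactsequenceforsheaves} localized at $y$) is short exact and splits. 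This yields conditions (1) and (2) at the chosen $y$. For the converse, suppose (1) and (2) hold at some $y\in Y^e$; after replacing $y$ by a point of $Y^e\cap U^e$ if necessary — here I would again use that the regular locus of $X$ is open and that the conditions, being about stalks, persist on a neighborhood by Lemma \ref{locallyfreesheaves} — I am in the situation where $\mathcal{O}_{X,y}$ is regular and the hypotheses of the local argument in Theorem \ref{differentialsheavesandregularity} are met. That argument, via the splitting of the conormal sequence, the rank count using Lemma \ref{cotangent} and Lemma \ref{lem:odd_generators}, and finally Theorem \ref{regularity}, forces $\mathcal{O}_{Y,y}$ to be regular, so $Y$ is generically nonsingular by Lemma \ref{opennonsingular}.

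The main obstacle I expect is the \textbf{compatibility of the test point}: conditions (1) and (2) are stated at \emph{some} point $y\in Y^e$, but the machinery of Theorem \ref{differentialsheavesandregularity} needs that same $y$ to lie over a regular point of $X$, i.e. in $U^e$. If the given $y$ from (1)–(2) happens to lie outside $U^e$, I cannot directly conclude regularity of $\mathcal{O}_{X,y}$, and the local exact sequence argument stalls. The resolution is to exploit openness: the conditions (1) and (2) are open conditions on $Y^e$ (freeness of a coherent sheaf's stalk spreads to a neighborhood by Lemma \ref{locallyfreesheaves}, and exactness of the conormal sequence together with local freeness of $\mathcal{J}/\mathcal{J}^2$ is likewise stable), so the locus where they hold is open and nonempty; intersecting it with the nonempty open set $Y^e\cap U^e$ produces a point satisfying everything simultaneously. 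Once this single coherent choice of $y$ is secured, the remainder is a verbatim transcription of the stalk-level reasoning already present in Theorem \ref{differentialsheavesandregularity}, so no genuinely new computation is required.
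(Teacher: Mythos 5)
Your argument is correct and follows essentially the same route as the paper, whose proof is precisely ``use Lemma \ref{locallyfreesheaves} and Theorem \ref{differentialsheavesandregularity}'': you spread the stalk conditions to an open set via Lemma \ref{locallyfreesheaves}, use irreducibility of $Y$ to move the test point into $Y^e\cap U^e$ where $\mathcal{O}_{X,y}$ is regular, and then run the stalk-level argument of the theorem. The obstacle you single out (the given $y$ may lie outside $U^e$) and its resolution by openness is exactly the content the paper leaves implicit.
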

\begin{proof}
Use Lemma \ref{locallyfreesheaves} and the above theorem.
\end{proof}

\section{Some questions and open problems}
\subsection{One relation superalgebras}
Let $A=K[X|Y]$ and $f\in I_A$. Let $B$ denote the one relation superalgebra $A/Af$, as in subsection 4.4. The discussion therein rises the following natural question.
\begin{question}\label{exponents}
Is the odd Krull dimension of one relation superalgebra $A/Af$ is determined by the basement of $f$?
\end{question}

\subsection{Gr\"{o}bner-Shirshov basis method}

One of the most powerful tools in the theory of polynomial ideals is the Gr\"{o}bner-Shirshov basis method
(cf. \cite{adamsloust, buch}). The following question is also motivated by the discussion in subsection 4.4.
\begin{question}\label{super-Grobner?}
Does any super-version of Gr\"{o}bner-Shirshov basis method exist?
\end{question}
 
\subsection{Dimension theory of supermodules}

Let $R$ be a Noetherian super-ring and $M$ be a finite $R$-supermodule. One can define a super-dimension of $M$ as 
\[\mathrm{sdim}(M)=\mathrm{Ksdim}(R/\mathrm{Ann}(M)).\]
\begin{problem}\label{super-dimtheoryforsupermodules}
Develop a dimension theory of supermodules over Noetherian super-rings.
\end{problem}

\end{document}